\pgfplotsset{width=10cm,compat=1.9}
\theoremstyle{theorem}
\newtheorem{thm}{Theorem}[section]
\newtheorem{lm}[thm]{Lemma}
\newtheorem{proposition}[thm]{Proposition}
\theoremstyle{remark}
\theoremstyle{definition}
\newtheorem{dft}[thm]{Definition}
\numberwithin{equation}{section}
\newcommand{\dt}{\partial_t}
\newcommand{\dx}{\partial_x}
\newcommand{\idx}{\,\mathrm{d}x}
\newcommand{\mrm}[1]{\mathrm{#1}}
\newcommand{\norm}[2]{\Vert #1 \Vert_{#2}}
\newcommand{\eps}{\varepsilon}
\newcommand{\R}{\mathbb{R}}
\newcommand{\N}{\mathbb{N}}
\definecolor{Turk}{rgb}{0,0.7,0.4}
\newcommand{\EEE}{\color{black}}
\begin{document}
    
\title{Plastic limit of a viscoplastic Burgers equation \\-- A toy model for sea-ice dynamics}
\author{Xin Liu\footnote{Department of Mathematics, Texas A\&M University, College Station, TX 77843-3368, USA. Email: \textsf{xliu23@tamu.edu}}\space,\space Marita Thomas\footnote{Department of Mathematics and Computer Science, Freie Universität Berlin, Arnimallee 9, 14195 Berlin, Germany. Email: \textsf{marita.thomas@fu-berlin.de}}\space, and Edriss S. Titi\footnote{Department of Applied Mathematics and Theoretical Physics, University of Cambridge, Cambridge CB3 0WA UK; Department of Mathematics, Texas A\&M University, College Station, TX 77843-3368, USA; also Department of Computer Science and Applied Mathematics, Weizmann Institute of Science, Rehovot 76100, Israel. Emails: \textsf{Edriss.Titi@maths.cam.ac.uk} \; \textsf{titi@math.tamu.edu}}}

\maketitle

\begin{abstract}
We study the plastic Burgers equation in one space dimension, i.e., the Burgers equation featuring an additional term formally given by the $p$-Laplacian with $p=1$, or rather, by the multivalued subdifferential of the total variation functional. 
Our study highlights that the  interplay of the advection term with the stresses given by the multivalued $1$-Laplacian is a crucial feature of this model. Eventhough it is an interesting model in itsef, it can also be regarded as a one-dimensional version of the momentum balance of Hibler's model for sea-ice dynamics. Therein, the stress tensor is given by a term with similar properties as the $1$-Laplacian in order to account for plastic effects of the ice.  
For our analysis we start out from a viscoplastic Burgers equation, i.e., a suitably regularized version of the plastic Burgers equation with a small regularization parameter $\eps>0$. 
For the viscoplastic Burgers equation, we construct a global $\mathrm{BV}$-solution. In the singular limit  $\eps\to0$ we deduce the existence of a $\mathrm{BV}$-solution for the plastic Burgers equation. In addition we show that the term arising as the limit of the regularized stresses is indeed  related to an element of the subdifferential of the total variation functional. 

    \bigskip
    {\noindent\bf Keywords:} Burgers equation; (visco)plastic fluid; singular limit; total variation flow with advection; subdifferential of non-smooth, convex  potential; Hibler's sea-ice model.\\
    {\noindent\bf MSC2020:} 
    35D30, 
    35D40, 
    35Q35, 
    35Q86, 
    49J45, 
    49J53, 
    76A05, 
    86A05. 
\end{abstract}

%
\section{Introduction}
%
As a feature of climate change the rapidly progressing melting of sea-ice is observed. 
This has severe impact on the global (especially northern hemispherical) atmospheric and oceanic currents as well as the local and global ecosystem (Thomas and  Dieckmann \cite{seaice-second-edition}), but it does open the opportunity for marine routes connecting the Atlantic and Pacific oceans, which will drastically reduce the marine shipping time. A widely used model, describing the dynamics and thermodynamics of sea-ice, was introduced by Hibler \cite{Hibler1979}. This model was further discussed and developed by a large number of scientists in the field. To name a few, we refer to Hunke \cite{Hunke2001}, Dansereau et al.\ \cite{dansereauMaxwellElastobrittleRheology2016}, Schreyer et al.\ \cite{schreyerElasticdecohesiveConstitutiveModel2006}, and Wilchinsky and Feltham \cite{wilchinskyRheologyDiscreteFailure2012};  see also Pritchard \cite{Pritchard2001} for a summary of models. 

As mentioned in \cite{liuWellPosednessHiblerDynamical2022a}, Hibler's original sea-ice model poses the challenging and fundamental problem of well-posedness of solutions to the nonlinearly coupled parabolic system, featuring the momentum balance as a subdifferential inclusion due to the non-smoothness of the plastic potential, with a hyperbolic system of conservation laws for the height and the compactness of the sea-ice. In particular, the loss of hyperbolicity and linear ill-posedness was discussed in Gray and Killworth \cite{grayStabilityViscousPlasticSea1995}, and Gray \cite{grayLossHyperbolicityIllposedness1999}. A regularization of Hibler's model that retains its original  coupled parabolic-hyperbolic  character 
was introduced, and its local well-posedness was  investigated, in \cite{liuWellPosednessHiblerDynamical2022a}. Therein the regularization used for the analysis  was also motivated by similar choices made for numerical simulations in e.g.\ \cite{Mehlmann2017a,Mehlmann2017,MehKor20SIDT}.  Independently, the model  was also studied analytically in \cite{brandtRigorousAnalysisDynamics2022} with an additional  parabolic regularization of the hyperbolic conservation laws.  
Hereafter, in relatively short time a comparably large body of work has been developed by different authors addressing different aspects on regularized versions of Hibler's model, also relying on the parabolic regularization of the model and the powerful tool of maximal parabolic regularity, cf.\ e.g.\ \cite{BrHi23TPSH,BiBrHi24RAIP,Bra25WPHP,DiDi25GEUH}. As has been observed in e.g.\  \cite{Hunke1997} the advection term is at least one order of magnitude smaller than the acceleration term and therefore it has been  omitted in numerical simulations based on the elastic-viscous-plastic (EVP) sea-ice model, which emerged from Hibler's model in order to overcome numerical difficulties. Global well-posedness of the EVP model  has been recently established in \cite{BLTT25GWEP} with an inviscid Voigt regularization.  
Most recently, \cite{DeGmHi25SLHS} studies the singular limit from viscoplastic to plastic rheology of a gradient flow, without advection as it has been suggested in \cite{BLTT25GWEP}, involving the stress tensor proposed by Hibler in \cite{Hibler1979}. In two space dimensions the appearance of the symmetrized velocity gradient requires sophisticated convergence analysis in the space of bounded deformations. Yet, neglecting the advection term in the momentum balance of Hibler's model allows the authors in \cite{DeGmHi25SLHS} to study this problem as an $L^2$-gradient flow of a convex, positively $1$-homogeneous functional. Due to its non-smoothness the gradient flow evolution can be formulated in terms of a variational inequality in $L^2$. This notion of solution was already introduced and studied for the total variation functional in e.g.\ \cite{Giga1998,Giga2010} and e.g.\ \cite{ABCM01DPTV,ABCM01MTVF,BeCaNo02TVFR,AndMaz05TVF}.  
\par
In contrast, in this paper, our goal is to make progress in the investigation of the sea-ice momentum balance with  
the plastic rheology of Hibler's model in the unregularized, plastic version \emph{in presence of the advection term}. This significantly changes the structure of the equation from a gradient flow to a damped Hamiltonian system. 
This is mainly due to the fact that the dynamics of sea-ice is described in the Eulerian frame involving the convective time derivative, which in general does not satisfy a gradient flow structure.  
To better understand the mathematical nature of this problem, in particular the interplay of the advection term with the non-smooth stress term, we will also make a number of simplifications to Hibler's original momentum balance, which we explain in the following.
The momentum balance in Hibler's model in the time interval $[0,T]$ and the sea-ice domain $\Omega\subset\R^2$ takes the form
\begin{subequations}
\begin{equation}
\label{mombal-Hibler}
m(\boldsymbol{\dot u}+\boldsymbol{u}\cdot\nabla\boldsymbol{u})-\mathrm{div}\,\boldsymbol{\sigma}_\mathrm{H}
=\boldsymbol{f}(\boldsymbol{u})\,,    
\end{equation}
where $\boldsymbol{u}:[0,T]\times\Omega\to\R^2$ is the unknown ice velocity, $m$ is the ice mass, and $\boldsymbol{f}(\boldsymbol{u})$ comprises external forces, such as forces due to wind and ocean stresses as well as the Coriolis force. The stress tensor $\boldsymbol{\sigma}_\mathrm{H}$ in Hibler's model from \cite{Hibler1979,Hibl77VSLS} reads
\begin{equation}
\label{stress-Hibler}
\begin{split}
\boldsymbol{\sigma}_\mathrm{H}
&:=\frac{2}{\mathrm{e}^2}\zeta(\boldsymbol{e},P)\boldsymbol{e}+\big(
\zeta(\boldsymbol{e},P)-\eta(\boldsymbol{e},P)\big)
\mathrm{tr}\,\boldsymbol{e}\,\mathbb{I}_{2\times2}
-\frac{P}{2}\mathbb{I}_{2\times2}
\,,\;\;\text{with }\\
\zeta(\boldsymbol{e},P)&:=\frac{P}{2\triangle(\boldsymbol{e})}\,,\;\;\;
\eta(\boldsymbol{e},P)
:=\frac{1}{\mathrm{e}^2}\zeta(\boldsymbol{e},P)\,,\;\;\text{ and}\\
\triangle(\boldsymbol{e})&:=
\Big(
(e_{11}^2+e_{22}^2)\big(
1+\frac{1}{\mathrm{e}^2}
\big)
+\frac{4}{\mathrm{e}^2}e_{12}^2
+2e_{11}e_{22}\big(
1-\frac{1}{\mathrm{e}^2}
\big)
\Big)^{1/2}\,.
\end{split}
\end{equation}
\end{subequations}
Herein, $\boldsymbol{e}:=
\frac{1}{2}(\nabla \boldsymbol{u}+\nabla \boldsymbol{u}^\top)$ is the symmetrized gradient of the velocity field $\boldsymbol{u}$ with components  $e_{ij},$ $i,j=1,2$, $P$ is the prescribed ice pressure, and the constant $\mathrm{e}>0$ denotes the ratio of the principal axes of the ellipse that describes the plastic yield surface in terms of $\triangle(\boldsymbol{e})$. Moreover, $\mathbb{I}_{2\times2}\in\R^{2\times 2}$ is the identity matrix.
It can be readily checked that the stress tensor $\boldsymbol{\sigma}_\mathrm{H}$ is obtained as the derivative $\partial_{\boldsymbol{e}}\hat\psi_{\mathrm{H}}(\boldsymbol{e})$ of potential 
\begin{equation}
\label{psi-Hibler}
\hat\psi_{\mathrm{H}}(\boldsymbol{e})
:=\psi_{\mathrm{H}}(\boldsymbol{e})
-\frac{P}{2}\,\mathbb{I}_{2\times 2}:\boldsymbol{e}\quad
\text{ with }\;
\psi_{\mathrm{H}}(\boldsymbol{e}):=P\cdot\triangle(\boldsymbol{e})\,.
\end{equation}
Note that  $\psi_{\mathrm{H}}:\R^{2\times2}\to[0,\infty)$ is convex and positively $1$-homogeneous with $\psi_{\mathrm{H}}(\boldsymbol{0})=0$. In particular, $\psi_\mathrm{H}$ is not classically differentiable at $\boldsymbol{e}=0,$ so that the relation for $\boldsymbol{\sigma}_\mathrm{H}$ rather has to be understood as a subdifferential inclusion for the non-smooth potential $\psi_\mathrm{H},$ i.e.,
\begin{equation*}
   \big(\boldsymbol{\sigma}_\mathrm{H}+\frac{P}{2}\,\mathbb{I}_{2\times 2}\big)\in\partial\psi_\mathrm{H}(\boldsymbol{e})\,.
\end{equation*}
Furthermore note that choosing the ratio $\mathrm{e}$ of the principal axes as $\mathrm{e}=1,$ turns the ellipse into a ball as it results with this choice in 
\begin{equation*}
\triangle(\boldsymbol{e})=\sqrt{2}\,\big|\boldsymbol{e}\big|\,.
\end{equation*}
Also note that the characteristic properties of $\psi_\mathrm{H}$ are retained for the choice $\mathrm{e}=1$, i.e., also for the plastic yield surface being a ball, 
the potential $\psi_\mathrm{H}$ is convex, positively $1$-homogeneous, and non-smooth for $\boldsymbol{e}=0$. 
\par 
Exactly this very non-smoothness of $\psi_\mathrm{H}$ in combination with the advection term makes the mathematical analysis of the momentum balance \eqref{mombal-Hibler} challenging. The inherent features are already visible in one space dimension. Hereby, the external loadings and the effect of the pressure do not cause additional obstructions. For our mathematical study of the problem we shall therefore make the following simplifications: 
We restrict the analysis to one space dimension,  
 set the parameters $m=P=1$ in \eqref{mombal-Hibler}, $\mathrm{e}=1$ in \eqref{stress-Hibler} and also neglect the external loadings $\boldsymbol{f}(\boldsymbol{u})$ as well as the  the linear term $-\frac{P}{2}\mathbb{I}_{2\times2}:\boldsymbol{e}$ in $\hat\psi_\mathrm{H}$ from \eqref{psi-Hibler}.  With these simplifications we arrive at the following toy model for Hibler's sea-ice momentum balance: 
 \begin{subequations}
 \label{CauchyP-burgers-plastic}
 \begin{align}
\label{burgers-plastic}
    \dt u + \dx({u^2}/{2}) - \dx \sigma&=0\quad\text{ in } D\quad\;\text{with }\\
 \label{burgers-plastic-subd}   
    \sigma&\in \partial\psi(\partial_xu)\,,\quad\text{where}\\
      \label{def-psi}
\psi:\mathbb{R}\to[0,\infty),\quad \psi(e)&:=|e|\,.  
\end{align}
This is the Burgers equation featuring an additional stress term that arises as in \eqref{burgers-plastic-subd}  from the subdifferential of the convex, positively $1$-homogeneous potential $\psi$ from \eqref{def-psi}. As explained above, this corresponds to the special case of the yield surface being a ball in Hibler's model in more than one space dimension.   
\par 
Classical results in literature, cf.\ e.g.\ \cite{burgersMathematicalModelIllustrating1948,dafermosHyperbolicConservationLaws2000,Evans2010}, are established for the Burgers equation, i.e.\ with $\psi\equiv0$ in \eqref{def-psi}, on $[0,\infty)\times\R$.  In this line we also choose  
\begin{equation}
D:=[0,\infty)\times\R    
\end{equation}
in \eqref{burgers-plastic}. Accordingly, equation \eqref{burgers-plastic} is complemented by the initial condition
\begin{equation}
\label{burgers-IC}
u(0,x)=u_\mathrm{in}(x)\quad\text{for all }x\in\R\,.    
\end{equation}
\end{subequations}
Since $\psi$ from \eqref{def-psi}  is convex, relation \eqref{burgers-plastic-subd} has to be understood in the sense of subdifferentials of convex potentials, i.e., for all $e\in\mathbb{R}$ there holds  
\begin{equation}
\label{def-subdiff}
\sigma\in\partial\psi(e)
\quad\Leftrightarrow\quad
\forall\,\tilde e\in\mathbb{R}:\;
\sigma\cdot(\tilde e-e)\leq\psi(\tilde e)-\psi(e)\,,
\end{equation}
and $\partial\psi$ is set-valued. For the potential $\psi$ from \eqref{def-psi} we have in particular
\begin{equation}
\label{subdiff-psi}
\begin{split}
\partial\psi(e)=\mathrm{Sign}(e)
&:=\left\{
\begin{array}{cl}
\big\{\frac{e}{|e|}\big\}&\text{if }e\not=0\,,\\[1ex]
{[-1,1]}&\text{if }e=0\,,
\end{array}
\right.\\[1.5ex]
&\,=
\left\{
\begin{array}{cl}
\{-1\}&\text{if }e<0\,,\\[0.5ex]
{[-1,1]}&\text{if }e=0\,,\\[0.5ex]
\{1\}&\text{if }e>0\,.
\end{array}
\right.
\end{split}
\end{equation}
Although the plastic Burgers equation \eqref{burgers-plastic} has the potential to apply also for other physical scenarios, we here give an interpretation in the context 
of sea-ice. The advection term in \eqref{burgers-plastic} is responsible for the flow of sea-ice. 
The stress tensor obtained from the non-smooth potential describes the non-Newtonian, plastic rheology of sea-ice. Possible shock formation from the nonlinear advection can be seen as the ridging of sea-ice, where the ice flow compresses locally to form a jump. The (formation of) plateaus, where $\partial_xu=0,$ can be seen as (the formation of) a large block of ice mass, moving at the same speed. The interaction of shocks and plateaus then corresponds to the collision of ice ridges and large ice blocks. 
%
\subsection{Burgers equation, total variation flow, and very singular diffusion}
%
Our approach to the study of Cauchy problem \eqref{CauchyP-burgers-plastic}  is heavily inspired by classical studies of the Burgers  equation, which was proposed in  \cite{burgersMathematicalModelIllustrating1948} as a model for turbulence and is itself an important equation in the study of hyperbolic conservation laws. We refer e.g.\ to Bressan \cite{bressanGlobalSolutionsSystems1992}, Dafermos \cite{dafermosHyperbolicConservationLaws2000a}, and Lax \cite{laxHyperbolicPartialDifferential2006a} for monographs on hyperbolic PDEs. In particular, the notion of $\mathrm{BV}$-solutions, which serves as a weak solution concept for first order hyperbolic conservation laws, will also be applied to  \eqref{CauchyP-burgers-plastic}. Regularizing the Burgers equation by the $2$-Laplacian stemming from the quadratic viscous potential $\psi_2(e):=\frac{1}{2}|e|^2$ leads to the viscous Burgers equation. Due to its parabolicity it has smooth solutions and its vanishing-viscosity limit leads to Oleinik's entropy condition    \cite{hoffSharpFormOleinik1983,oleinikDiscontinuousSolutionsNonlinear1957}, which together with the Rankine-Hugoniot condition ensures the uniqueness of $\mathrm{BV}$-solutions for the inviscid Burgers equation. 
 Kurganov and Rosenau investigated in \cite{kurganovEffectsSaturatingDissipation1997} 
the equation 
\begin{equation}\label{eq:SaB}
	\dt u + \dx (u^2/2) - \dx \biggl( \dfrac{\partial_xu}{\sqrt{\vert \partial_xu\vert^2 + 1}} \biggr) = 0\,.
\end{equation}
This is a regularized version of \eqref{burgers-plastic}, where $\psi$ from \eqref{def-psi} is replaced by 
\begin{equation*}
\tilde\psi(e):=\sqrt{|\partial_xu|^2+1}\,.
\end{equation*}
It is observed in \cite{kurganovEffectsSaturatingDissipation1997}  that  large gradients will lead to jumps. That is, the parabolicity of \eqref{eq:SaB} is too weak to prevent shock formation for given near-shock data. Unique smooth solutions with small data are also constructed in the same paper. See also \cite{kurganovBurgersTypeEquationsNonmonotonic1998,goodmanBreakdownBurgerstypeEquations1999,bertschHyperbolicPhenomenaStrongly1992,Ryk00DSSS}, for related studies. 
\par
However, these studies do not apply to \eqref{CauchyP-burgers-plastic}, where the stress term degenerates to an element of the multi-valued subdifferential if $\partial_xu=0$. 
Such a degeneracy also appears in the study of the total variation flow; note that $\|\partial_xu\|_{L^1(\R)}$ arising from  \eqref{def-psi}  results by relaxation in the total variation of a function $u\in \mathrm{BV}(\R)$. The total variation flow is formally given by the evolution equation 
\begin{equation}\label{eq:gf0}
	\dt u = \mathrm{div} \, \biggl( \dfrac{\nabla u}{|\nabla u|} \biggr)\,,
\end{equation}
and also called the very singular diffusion equation in  \cite{Giga2010,Giga1998}. It can be formulated as an $L^2$-gradient flow for the total variation functional. The non-smoothness of the total variation functional at $\mathrm{D}_xu=0$  thus leads to a formulation in terms of a variational inequality, in view of the definition \eqref{def-subdiff}, naturally replacing the multivalued subdifferential of the total variation functional, cf.\ \cite{Giga2010,Giga1998} as well as e.g.\ \cite{ABCM01DPTV,ABCM01MTVF,BeCaNo02TVFR,AndMaz05TVF}.  
Yet, again, for the plastic Burgers equation \eqref{CauchyP-burgers-plastic}, this gradient flow structure is not available due to the presence of the advection term, which rather results in a damped Hamiltonian system.  
\par 
%
\subsection{Outline and results of this work}
\label{Sec:Outline-Results}
%
Now we outline our study of \eqref{CauchyP-burgers-plastic} in this work. Firstly, in Section \ref{Ex-BV},  
we prove the existence of global $\mathrm{BV}$-solution for the Cauchy problem \eqref{CauchyP-burgers-plastic} of the plastic Burgers equation. Motivated by the vanishing viscosity approach for the Burgers equation \cite{hoffSharpFormOleinik1983,oleinikDiscontinuousSolutionsNonlinear1957} we achieve this  by a viscous approximation of the plastic Burgers equation \eqref{burgers-plastic} in terms of a small parameter $\eps>0,$ in combination with an $\eps$-dependent regularization of the plastic potential. For $\eps>0$ this regularized model is called the viscoplastic Burgers equation and it reads as follows  
\begin{align*}
\partial_tu^\eps+\partial_x(u^\eps)^2/2
-\partial_x\big(\sigma^\eps+\eps\partial_xu^\eps\big)&=0\quad\text{in }D\,,\quad\text{ where}\\
\sigma^\eps&=\partial_x\psi_\eps(\partial_xu^\eps)\quad\text{ with }
\psi_\eps(e):=\sqrt{|e|^2+\eps^2}  \,,  
\end{align*}
complemented by suitable initial conditions and far-field boundary conditions. Thanks to the regularizations made, for each $\eps>0$ fixed,  we deduce in Theorem \ref{Thm-Ex-Approx-Sol} the existence of a unique, global $H^2$-solution, which also satisfies a weak formulation in the sense of integral solutions, cf.\ \cite[Sec.\ 3.4.1]{Evans2010}, i.e., $u^\eps:D=[0,\infty)\times\R\to\R$ satisfies 
\begin{equation}
\label{weak-ibvp-approx-intro}
\int_0^\infty\int_{\mathbb{R}}\Big(
u^\varepsilon\partial_t\phi+\big(
\frac{(u^\varepsilon)^2}{2}-\varepsilon\partial_xu^\varepsilon-\sigma^\varepsilon
\big)\partial_x\phi
\Big)\,\mathrm{d}x\,\mathrm{d}t
+\int_{\mathbb{R}}u^\varepsilon_{\mathrm{in}}\phi(0)\,\mathrm{d}x=0
\end{equation}
for all $\phi\in \mathrm{C}^1_{\mathrm{c}}([0,\infty)\times\R)$. 
 We deduce an energy-dissipation balance and first apriori estimates, uniformly in $\eps>0$  in Proposition \ref{EDB-UniBds}. They provide sufficient compactness, i.e., the existence of a subsequence $(u^\eps,\sigma^\eps)_\eps$ and of a limit pair $(u,R),$ such that in particular
 \begin{align*}
u^\eps&\to u\;\;\text{in }L^2(D)\,,\\
\sigma^\eps
&\overset{*}{\rightharpoonup}R\;\;\text{in }L^\infty(D)\,,\\
\eps\partial_xu^\eps&\to0\;\;\text{in }L^2(D)\,,
 \end{align*}
 to pass to the plastic limit as $\eps\to0$ in the weak formulation \eqref{weak-ibvp-approx-intro} and in the energy-dissipation balance of the viscoplastic problem in Section \ref{Sec-BV-sol-plast}, see \eqref{weak-BV-uR} and \eqref{cdt:ene-ineq} below. In addition, we show in Section \ref{Sec:Limit-Oleinik} that also Oleinik's entropy condition is satisfied both for the solutions of the viscoplastic Burgers equation and for the plastic limit, see \eqref{cdt:oleinik} below. In fact, also here, similar to the classical, inviscid Burgers equation, Oleinik's entropy condition implies that a solution can only decrease across a jump discontinuity. 
 
 Due to the obtained convergence result the plastic limit problem will therefore feature the pair $(u,R),$ where $R\in[-1,1]$ a.e.\ in $D$ is the weak-$*$ limit of the regularized stresses $\sigma^\eps$. It is a major task to further establish a relation of $R$ to an element of the subdifferential of a functional related to $\psi$ from \eqref{def-psi} evaluated in $u$, i.e., a relation akin to \eqref{burgers-plastic-subd}. In fact, we establish for the pair $(u,R)$ in Section \ref{Sec:CharR-Detail} with Theorems \ref{Char-Ri} and \ref{Char-Rii} that condition \eqref{compat-R-intro} below holds true with the time-integrated total variation functional
 \begin{equation}
 \label{def-Psi-intro} 
 \begin{split}
 &\Psi:X\to[0,\infty]\,,\quad 
 \Psi(v):=\left\{\begin{array}{cl}
\int_0^T|\mathrm{D}_xv|(\R)\,\mathrm{d}t
&\text{if }v\in Y\,,\\
\infty&\text{otherwise}\,,
 \end{array}\right.\\
 &\text{with }\;\;
 X:=L^2((0,T);L^2(\R))
 \quad\text{and}\quad 
 Y:=X\cap L^1((0,T);\mathrm{BV}(\R))\,,
 \end{split}
 \end{equation}
and where $|\mathrm{D}_xv|(\R)$ denotes the total variation with respect to the variable $x\in\R$ on the spatial domain $\R$ of the function 
$v\in Y$. We point out that, in view of  $|R|\leq1$ a.e.\ in $[0,\infty)\times\R$ the identification relation \eqref{compat-R-intro} corresponds to the characterization of the elements of the subdifferential of the total variation functional as already established in earlier works in the static setting or in the case of the total variation flow, cf.\ e.g.\ \cite{ChGoNo15FPSC,BeCaNo02TVFR,AndMaz05TVF,Giga1998,Giga2010}. In order to establish relation \eqref{compat-R-intro} we will use tools from convex analysis. 
To apply a Minty-type argument to conclude relation \eqref{compat-R-intro} in Theorems \ref{Char-Ri} and \ref{Char-Rii} will require to establish the validity of a weak formulation for the plastic limit problem \eqref{CauchyP-burgers-plastic} for a  larger class of test functions than used in \eqref{weak-BV-uR}, namely for the test functions in the space 
$V:=L^\infty(0,T;L^\infty(\R)\cap\mathrm{BV}(\R))$. Indeed we verify in Section \ref{sec:impro-est}, Proposition \ref{UniBDsImpro} the validity of additional uniform bounds for the approximating sequence $(u^\eps,\sigma^\eps)_\eps,$ which allow it to conclude the regularity properties \eqref{reg-limit-intro} for the limit pair $(u,R)$ and thus qualify the choice of $V$. Since smooth functions approximate $\mathrm{BV}$-functions only with respect to strict convergence the validity of the weak formulation in $V$ cannot be directly concluded from \eqref{weak-BV-uR} by density. Instead, to obtain the result in Section \ref{Sec:Weak-V}, Theorem \ref{Weak-V}  we start the argument once more on the level of the approximating problems, conclude the existence of limits for the terms $(\partial_tu^\eps)_\eps,$ 
$(\partial_x(u^\eps)^2/2)_\eps$ and 
$(\partial_x \sigma^\eps+\eps\partial_x^2u^\eps)_\eps$ in the dual space $V^*$ thanks to the additional uniform estimates,  and show that they can be identified  in the sense of distributions with the corresponding distributional derivatives of the limit $(u,R)$. 

Summarizing the above discussion, a $\mathrm{BV}$-solution for the Cauchy problem \eqref{CauchyP-burgers-plastic} of the plastic Burgers equation is defined as follows: 
\begin{dft}[$\mathrm{BV}$-solution of the plastic Burgers equation]
\label{def:bv-sol}
	A pair $ (u, R) $ is a $\mathrm{BV}$-solution to the Cauchy problem \eqref{CauchyP-burgers-plastic} of the plastic Burgers equation, if,
	\begin{enumerate}
		\item $ (u, R) $ satisfies \eqref{burgers-plastic} in the sense of distributions, i.e., for any $ \phi \in \mathrm{C}_c^\infty([0,\infty)\times\R) $ it  holds 
			\begin{equation}
			\label{weak-BV-uR}
				\int_0^\infty\int_{\R} \biggl( u\, \dt \phi + (u^2/2) \dx \phi - R\, \dx \phi \biggr)  \,\mathrm{d}x\,\mathrm{d}t
                +\int_{\R}u_\mathrm{in}\phi(0)\,\mathrm{d}x = 0\,
			\end{equation}
            with the initial datum $u(t=0)=u_\mathrm{in}$;
		\item the pair $(u,R)$ has the following regularity 
        \begin{subequations}
        \label{reg-limit-intro}
        \begin{align}
        u &\in L^\infty (0,\infty; L^\infty(\mathbb R)
        \cap \mathrm{BV}(\mathbb R)) \cap \mathrm{C}([0,\infty);L^1(\mathbb R))\cap L^\infty(0,\infty;L^2(\mathbb R))
        \,,\\ 
        R &\in L^\infty (0,\infty; L^\infty(\mathbb R) \cap \mathrm{BV}(\mathbb R))
        \;\text{ and }\; 
        \vert R \vert \leq 1 \text{ a.e.\ in }[0,\infty)\times\R\,;
        \end{align}
        \end{subequations}
        \item the energy-dissipation estimate is satisfied for all $t>0$ 
			\begin{equation}\label{cdt:ene-ineq}
				\dfrac{1}{2}\norm{u(t)}{L^2(\mathbb R)}^2 
				+ \int_0^t |\mathrm{D}_x u|(\R) \,\mathrm{d}s 
				\leq \dfrac{1}{2} \norm{u_\mrm{in}}{L^2(\mathbb R)}^2\,,
			\end{equation}
            where $|\mathrm{D}_x u|(\R)$ denotes the total variation of $u$ with respect to the variable $x\in\R$;  
        \item Oleinik's entropy condition is satisfied in the sense of distributions, i.e., 
			\begin{equation}\label{cdt:oleinik}
				\mathrm{D}_x u(x,t) < \frac{1}{t} \quad \text{for all } t > 0,
			\end{equation}
            where $ \mathrm{D}_x u $ is the distributional derivative of $ u $ with respect to the variable $x$;
				\item For all $T>0$ the compatibility condition 
        \begin{equation}
        \label{compat-R-intro}
\exists\,\zeta\in\partial\Psi(\mathrm{D}_xu)
\quad\text{and}\quad
-\mathrm{D}_xR=\zeta
\;\text{ in the sense of distributions}
        \end{equation}
        holds true for the functional 
        $\Psi:L^2(0,T;L^2(\R))\to[0,\infty]$ from \eqref{def-Psi-intro}.
	\end{enumerate}
\end{dft}
Now we are in the position to state the main result in this paper:
\begin{thm}[Existence of $\mathrm{BV}$-solutions for the plastic Burgers equation]\label{thm:bv-sol}
	Assume that the initial datum $ u_\mrm{in} \in L^\infty(\mathbb R) \cap \mathrm{BV}(\mathbb R) \cap L^2(\mathbb R) $ satisfies the compatibility condition
	\begin{equation}\label{ini:dt-u}
		\norm{\dt u(t=0)}{L^1(\mathbb R)} = \norm{ - \dx (u_\mrm{in}^2/2) + \dx (\dx u_\mrm{in} / \vert \dx u_\mrm{in}\vert)}{L^1(\mathbb R)} < \infty\,.
	\end{equation}
	Then there exists a $\mathrm{BV}$-solution of Cauchy problem \eqref{CauchyP-burgers-plastic} of the plastic Burgers equation in the sense of Definition \ref{def:bv-sol}.
\end{thm}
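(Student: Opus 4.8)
The plan is to assemble the main theorem from the chain of intermediate results announced in the outline, following the vanishing-viscosity together with vanishing-regularization program. First I would fix $\eps>0$ and invoke Theorem \ref{Thm-Ex-Approx-Sol} to obtain the unique global $H^2$-solution $u^\eps$ of the viscoplastic Burgers equation, together with its regularized stress $\sigma^\eps=\partial_x\psi_\eps(\partial_x u^\eps)$, satisfying the integral identity \eqref{weak-ibvp-approx-intro}. Proposition \ref{EDB-UniBds} then furnishes the energy-dissipation balance and the apriori bounds that are uniform in $\eps$; these yield enough weak and weak-$*$ compactness to extract a (not relabeled) subsequence and a limit pair $(u,R)$ with $u^\eps\to u$ in $L^2(D)$, $\sigma^\eps\overset{*}{\rightharpoonup}R$ in $L^\infty(D)$, and $\eps\partial_x u^\eps\to 0$ in $L^2(D)$, where $|R|\le1$ a.e.\ follows from $|\sigma^\eps|\le1$.

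With these convergences I would pass to the limit $\eps\to0$ term by term in \eqref{weak-ibvp-approx-intro}: the linear terms pass by the weak and weak-$*$ convergences, the viscous contribution $\eps\partial_x u^\eps$ vanishes, and the nonlinear flux $(u^\eps)^2/2$ converges because the strong $L^2$ convergence of $u^\eps$ together with the uniform $L^\infty$ bound gives $(u^\eps)^2\to u^2$ in $L^1_{\mathrm{loc}}(D)$; this establishes the distributional formulation \eqref{weak-BV-uR}, i.e.\ condition (1) of Definition \ref{def:bv-sol}, with attainment of the initial datum. Passing to the limit in the energy-dissipation balance and using lower semicontinuity of the $L^2$-norm and of the total variation yields the energy inequality \eqref{cdt:ene-ineq}, condition (3). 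The regularity asserted in \eqref{reg-limit-intro}, condition (2), follows by combining the uniform bounds of Proposition \ref{EDB-UniBds} with the improved uniform estimates of Proposition \ref{UniBDsImpro} and lower semicontinuity in $L^\infty(0,\infty;L^\infty(\R)\cap\mathrm{BV}(\R))$; the continuity into $L^1$ is obtained from the uniform bound on $\partial_t u^\eps$ that rests on the compatibility hypothesis \eqref{ini:dt-u}. Oleinik's entropy condition \eqref{cdt:oleinik}, condition (4), is then inherited in the limit from the corresponding estimate for the viscoplastic solutions, as carried out in Section \ref{Sec:Limit-Oleinik}.

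The remaining and most delicate step is the compatibility condition \eqref{compat-R-intro}, condition (5), characterizing $R$ through the subdifferential of the time-integrated total variation functional $\Psi$ from \eqref{def-Psi-intro}. The difficulty is twofold. First, the natural Minty-type monotonicity argument requires the weak formulation to hold for test functions in $V=L^\infty(0,T;L^\infty(\R)\cap\mathrm{BV}(\R))$, and this cannot be reached from \eqref{weak-BV-uR} by density, since smooth functions approximate $\mathrm{BV}$-functions only in the strict topology and not strongly. Following Theorem \ref{Weak-V}, I would therefore return to the approximating problems and use the additional uniform estimates of Proposition \ref{UniBDsImpro} to secure limits for $(\partial_t u^\eps)_\eps$, $(\partial_x(u^\eps)^2/2)_\eps$, and $(\partial_x\sigma^\eps+\eps\partial_x^2 u^\eps)_\eps$ in the dual space $V^*$, and to identify these limits distributionally with the corresponding derivatives of $(u,R)$. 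Second, with this enlarged class of admissible test functions, I would invoke Theorems \ref{Char-Ri} and \ref{Char-Rii} and apply the convex-analytic monotonicity (Minty) machinery to conclude that $-\mathrm{D}_x R$ coincides with an element $\zeta\in\partial\Psi(\mathrm{D}_x u)$ in the sense of distributions; this is the sought identification, paralleling the static characterization of elements of the subdifferential of the total variation functional. I expect this last step to be the main obstacle, precisely because it hinges on the non-standard passage to the test space $V$; once conditions (1)--(5) are collected the proof is complete.
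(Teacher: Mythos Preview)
Your proposal is correct and follows essentially the same route as the paper: the paper assembles Theorem~\ref{thm:bv-sol} via Theorem~\ref{Ex-BV-Sol-Limit}, extracting the limit pair from the uniform bounds of Propositions~\ref{EDB-UniBds} and~\ref{UniBDsImpro}, passing to the limit in the weak formulation and the energy balance, invoking Section~\ref{Sec:Limit-Oleinik} for Oleinik's condition, and then using Theorem~\ref{Weak-V} together with Theorems~\ref{Char-Ri} and~\ref{Char-Rii} for the subdifferential identification. You have correctly pinpointed the main obstacle, namely that the Minty argument requires the weak formulation in $V^*$ and that this cannot be obtained by density from $\mathrm{C}^1_{\mathrm{c}}$ test functions.
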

%
\section{Existence of $\mathrm{BV}$-solutions} 
\label{Ex-BV}
%
In the following, we construct a weak solution for Cauchy problem \eqref{CauchyP-burgers-plastic} in the sense 
of $\mathrm{BV}$-solutions, cf.\ Def.\ \ref{def:bv-sol},  as the asymptotic limit $\varepsilon\to0$ of the solutions to the following Cauchy problems with a viscous regularization scaled by a parameter $\varepsilon>0$: For all $ \varepsilon \in (0,1) $ consider 
\begin{subequations}
\label{ibvp-approx}
\begin{equation}
\label{eq:b-p-app}
    \dt u^\varepsilon + \dx({(u^\varepsilon)^2}/{2}) - \dx (\dfrac{\dx u^\varepsilon}{\sqrt{\vert \dx u^\varepsilon \vert^2 + \varepsilon^2}}) - \varepsilon \partial_{xx} u^\varepsilon = 0\qquad \text{in} ~ (0,\infty)\times\mathbb R,
\end{equation}
complemented by the following far-field boundary conditions
\begin{eqnarray}
\label{bc:far-field}
	\lim_{|x|\rightarrow \infty} u^\varepsilon(t,x) &=& 0\qquad\text{for all }t\in[0,\infty)\,,
\end{eqnarray}
and complemented by the following initial condition
\begin{equation}
u(0,x)=u_\mathrm{in}^\varepsilon(x)\quad\text{ for all }x\in\mathbb{R}\,.
\end{equation}
\end{subequations}
For any $\varepsilon>0$ fixed the following existence result holds true and for the readers' convenience we give the proof 
in Section \ref{Sec-Pf-Ex-Approx-Sol}: 
\begin{thm}[Existence of approximate solutions]
\label{Thm-Ex-Approx-Sol}
Keep $\varepsilon>0$ fixed and assume that the initial datum has the regularity $u_\mathrm{in}^\varepsilon\in H^2(\R)$ and that it also satisfies \eqref{bc:far-field} 
for $t=0$. Then, for all $T>0$ there is a weak solution $u^\varepsilon:[0,T]\times\R\to\R$ of the approximate 
Cauchy problem \eqref{ibvp-approx}, i.e.,  $u^\eps$ satisfies 
\begin{equation}
\label{weak-ibvp-approx}
\begin{split}
\int_0^\infty\int_{\mathbb{R}}\Big(
u^\varepsilon\partial_t\phi+\big(
\frac{(u^\varepsilon)^2}{2}-\varepsilon\partial_xu^\varepsilon-R^\varepsilon
\big)\partial_x\phi
\Big)\,\mathrm{d}x\,\mathrm{d}t
+\int_{\mathbb{R}}u^\varepsilon_{\mathrm{in}}\phi(0)\,\mathrm{d}x=0
\end{split}
\end{equation}
 for all $\phi\in \mathrm{C}^1_\mathrm{c}([0,\infty)\times\mathbb{R})$ and where 
\begin{equation*}
R^\varepsilon:=\dfrac{\dx u^\varepsilon}{\sqrt{\vert \dx u^\varepsilon \vert^2 + \varepsilon^2}}  \,.
\end{equation*}   
The function $u^\eps$ is of the regularity 
\begin{subequations}
\label{reg-ueps}
\begin{align}
\label{reg-ueps1}
u^\eps&\in L^\infty(0,T;H^2(\R))\phantom{\cap L^2(0,T;H^2(\R))}\quad\text{such that}\\
\label{reg-ueps2}
\partial_tu^\eps&\in L^\infty(0,T;L^2(\R))\cap L^2(0,T;H^1(\R))\,,
\end{align}
\end{subequations}
and it also satisfies the following integral far-field relations for all $T\in(0,\infty)$:
\begin{subequations}
\label{int-far-field}
\begin{eqnarray}
\label{int-far-field1}
\int_0^T\tfrac{1}{3}u^\varepsilon(\pm r,t)^3\,\mathrm{d}t&\to&0\quad\text{as }r\to\infty\,,\\
\label{int-far-field2}
\int_0^Tu^\varepsilon(\pm r,t)\varepsilon \partial_xu^\varepsilon(\pm r,t)\,\mathrm{d}t&\to&0\quad\text{as }r\to\infty\,,\\
\label{int-far-field3}
\int_0^Tu^\varepsilon(\pm r,t)\frac{\partial_xu^\varepsilon(\pm r,t)}{\sqrt{|\partial_xu^\varepsilon(\pm r,t)|^2+\varepsilon^2}}\,\mathrm{d}t&\to&0\quad\text{as }r\to\infty\,.
\end{eqnarray}
\end{subequations}
\end{thm}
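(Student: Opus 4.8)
The plan is to exploit that, for each fixed $\varepsilon>0$, equation \eqref{eq:b-p-app} is a uniformly parabolic quasilinear equation with smooth coefficients, to construct a strong solution, and then to derive enough a priori regularity to justify the weak formulation. To expose the parabolic structure I combine the two diffusion terms: a direct computation gives
\[
\partial_x\Big(\tfrac{\partial_x u^\varepsilon}{\sqrt{|\partial_x u^\varepsilon|^2+\varepsilon^2}}\Big)+\varepsilon\,\partial_{xx}u^\varepsilon = a(\partial_x u^\varepsilon)\,\partial_{xx}u^\varepsilon,\qquad a(p):=\varepsilon+\frac{\varepsilon^2}{(p^2+\varepsilon^2)^{3/2}},
\]
so that $\varepsilon\le a(p)\le\varepsilon+\varepsilon^{-1}$ for all $p\in\R$. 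Hence \eqref{eq:b-p-app} reads $\partial_t u^\varepsilon+u^\varepsilon\partial_x u^\varepsilon=a(\partial_x u^\varepsilon)\,\partial_{xx}u^\varepsilon$, a uniformly parabolic equation whose ellipticity constants depend on $\varepsilon$ but not on the solution.

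For the construction I would first obtain a local-in-time strong solution by a fixed-point argument (Banach or Schauder) built on the frozen-coefficient linearization $\partial_t u-a(\partial_x\bar u)\,\partial_{xx}u=-\bar u\,\partial_x\bar u$: for given $\bar u$ with $\partial_x\bar u\in L^\infty$ this is a linear uniformly parabolic equation, solvable by standard theory, and the resulting solution map is a contraction (resp.\ compact) on a short time interval, yielding a solution with the regularity claimed in \eqref{reg-ueps}; a Galerkin scheme is an equally viable alternative. To handle the unbounded domain I would either work with a basis adapted to $L^2(\R)$ or solve first on truncated intervals $(-L,L)$ with homogeneous boundary data and pass $L\to\infty$, using the a priori bounds below for compactness.

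The global continuation and the stated regularity rest on four a priori estimates, uniform on $[0,T]$ with constants depending on $\varepsilon$ and $T$. (i) Testing with $u^\varepsilon$ and using that the advection is conservative while $\frac{\partial_x u^\varepsilon}{\sqrt{|\partial_x u^\varepsilon|^2+\varepsilon^2}}\,\partial_x u^\varepsilon\ge0$ gives bounds on $\|u^\varepsilon\|_{L^\infty(0,T;L^2)}$ and $\varepsilon^{1/2}\|\partial_x u^\varepsilon\|_{L^2(0,T;L^2)}$. (ii) A comparison/maximum-principle argument yields $\|u^\varepsilon(t)\|_{L^\infty}\le\|u_\mathrm{in}^\varepsilon\|_{L^\infty}$. (iii) Testing with $-\partial_{xx}u^\varepsilon$, using $a\ge\varepsilon$ and the $L^\infty$-bound to absorb the advective term $\int u^\varepsilon\partial_x u^\varepsilon\partial_{xx}u^\varepsilon$, gives after Gronwall bounds on $\|\partial_x u^\varepsilon\|_{L^\infty(0,T;L^2)}$ and $\|\partial_{xx}u^\varepsilon\|_{L^2(0,T;L^2)}$. (iv) Differentiating the equation in time, $w:=\partial_t u^\varepsilon$ solves the linear parabolic equation $\partial_t w+\partial_x(u^\varepsilon w)=\partial_x\big(a(\partial_x u^\varepsilon)\,\partial_x w\big)$; testing with $w$, estimating the remaining term $\tfrac12\int\partial_x u^\varepsilon\,w^2\idx$ by the Gagliardo--Nirenberg inequality $\|w\|_{L^4}^2\le C\|w\|_{L^2}^{3/2}\|\partial_x w\|_{L^2}^{1/2}$ and absorbing into the dissipation $\varepsilon\|\partial_x w\|_{L^2}^2$, one closes a Gronwall estimate for $\|w\|_{L^\infty(0,T;L^2)}$ and $\|\partial_x w\|_{L^2(0,T;L^2)}$, i.e.\ $\partial_t u^\varepsilon\in L^\infty(0,T;L^2)\cap L^2(0,T;H^1)$ as in \eqref{reg-ueps2}. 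Here I use $w(0)=-\partial_x((u_\mathrm{in}^\varepsilon)^2/2)+a(\partial_x u_\mathrm{in}^\varepsilon)\partial_{xx}u_\mathrm{in}^\varepsilon\in L^2(\R)$, which holds precisely because $u_\mathrm{in}^\varepsilon\in H^2(\R)$. Finally, writing $\partial_{xx}u^\varepsilon=a(\partial_x u^\varepsilon)^{-1}\big(w+u^\varepsilon\partial_x u^\varepsilon\big)$ and combining (i)--(iv) upgrades the regularity to $u^\varepsilon\in L^\infty(0,T;H^2(\R))$, i.e.\ \eqref{reg-ueps1}. These bounds being uniform on $[0,T]$, the local solution extends globally; passing to the limit in the construction scheme then yields the integral identity \eqref{weak-ibvp-approx}.

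It remains to verify the far-field relations \eqref{int-far-field}. These are precisely the boundary contributions $\tfrac13 u^\varepsilon(\pm r,t)^3$, $\varepsilon u^\varepsilon\partial_x u^\varepsilon(\pm r,t)$ and $u^\varepsilon\tfrac{\partial_x u^\varepsilon}{\sqrt{|\partial_x u^\varepsilon|^2+\varepsilon^2}}(\pm r,t)$ produced when the energy identity is derived on $(-r,r)$ rather than on $\R$, and they vanish as $r\to\infty$ because $u^\varepsilon\in L^\infty(0,T;H^2(\R))\hookrightarrow L^\infty(0,T;C^1_0(\R))$ forces $u^\varepsilon$ and $\partial_x u^\varepsilon$ to decay at spatial infinity uniformly in $t$, so that after time integration dominated convergence (or a uniform tail estimate) applies. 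The main obstacle I anticipate is estimate (iv): the advective term $\int\partial_x u^\varepsilon\,w^2$ is not controlled by the $L^2$-dissipation alone, and the Gagliardo--Nirenberg interpolation together with the absorption into $\varepsilon\|\partial_x w\|_{L^2}^2$ and the identification of the admissible initial acceleration $w(0)\in L^2$ (which is what fixes the $H^2$-hypothesis on $u_\mathrm{in}^\varepsilon$) is the delicate point; a secondary technical difficulty is the rigorous justification of the integrations by parts on the whole line, which is the very reason to construct the solution via truncation and then to establish the far-field relations \eqref{int-far-field}. Uniqueness, although only existence is asserted, follows by a standard Gronwall argument applied to the difference of two solutions, using $a\ge\varepsilon$ together with the $L^\infty$- and $H^1$-bounds.
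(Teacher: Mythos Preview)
Your proposal is correct and follows essentially the same architecture as the paper's proof in Section~\ref{Sec-Pf-Ex-Approx-Sol}: approximate on bounded intervals (the paper uses Dirichlet problems on $B_r=(-r,r)$ solved by Galerkin/time-discretization, which is one of the two routes you name), derive a hierarchy of $r$-uniform a priori estimates up to $L^\infty(0,T;H^2)$ and $\partial_t u^\varepsilon\in L^\infty(0,T;L^2)\cap L^2(0,T;H^1)$, pass $r\to\infty$, and then read off the far-field relations from the regularity.

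The tactical differences are minor but worth noting. In your step (iii) you invoke a maximum principle for the $L^\infty$-bound and use it to absorb the advective term; the paper instead bounds $\|u^{\varepsilon,r}(t)\|_{L^\infty}$ via Sobolev embedding from the $H^1$-energy estimate (cf.\ \eqref{uni-bd-sol-DCP-smooth3}) and closes a Gronwall with $\exp\!\big(\varepsilon^{-1}\!\int_0^T\!\|u^{\varepsilon,r}\|_{L^\infty}^2\big)$ in the exponent. In your step (iv) you control $\int\partial_x u^\varepsilon\,w^2$ by Gagliardo--Nirenberg and absorb into $\varepsilon\|\partial_x w\|_{L^2}^2$; the paper instead uses the already available bound $\int_0^T\|\partial_x u^{\varepsilon,r}\|_{L^\infty}^2\,\mathrm{d}t\le C(\varepsilon,T)$ (their \eqref{uni-bd-sol-DCP-smooth6}) to estimate directly $\int\partial_x u^{\varepsilon,r}\,w^2\le\|\partial_x u^{\varepsilon,r}\|_{L^\infty}\|w\|_{L^2}^2$. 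For the far-field relations your dominated-convergence argument (pointwise decay from $H^2\hookrightarrow C^1_0$ together with a uniform-in-$t$ majorant from $L^\infty(0,T;H^2)$) is valid and somewhat more direct than the paper's contradiction argument based on uniform continuity of $x\mapsto\|u^\varepsilon(\cdot,x)\|_{L^2(0,T)}^2$. None of these variations changes the logical skeleton of the proof.
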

\EEE
%
\subsection{Uniform a priori estimates}
\label{sec:apriori}
%
In the following we deduce uniform a priori estimates for the sequence of solutions $(u^\varepsilon)_\varepsilon$ of \eqref{ibvp-approx} obtained by Thm.\ \ref{Thm-Ex-Approx-Sol}. 
In a first step, in Section \ref{sec:first-est}, Proposition \ref{EDB-UniBds}, we deduce uniform estimates as a direct consequence of an energy-dissipation balance. In a second step, in Section \ref{sec:impro-est}, we will obtain further, improved estimates by testing \eqref{eq:b-p-app} with suitably tailored test functions.
%
\subsubsection{First uniform a priori estimates from the energy-dissipation balance}
\label{sec:first-est}
%
In a first step, we get the following direct estimates: 
\begin{proposition}[Energy-dissipation balance and first uniform a priori estimates]
\label{EDB-UniBds}
Let the assumptions of Thm.\ \ref{Thm-Ex-Approx-Sol} hold true. 
For every $\varepsilon>0$ let $u^\varepsilon$ be a solution of the approximate Cauchy problem \eqref{ibvp-approx} obtained in Thm.\ \ref{Thm-Ex-Approx-Sol}. Then, the following statements hold true: 
\begin{enumerate}
\item 
For all $\varepsilon>0$ and all $T\in(0,\infty)$ the following energy-dissipation balance is satisfied
\begin{equation}
\label{en-dissip-eps}
\begin{split}
&\frac{1}{2}\|u^\varepsilon(T)\|_{L^2(\mathbb{R})}^2
+\varepsilon\int_0^T\|\partial_xu^\varepsilon\|^2_{L^2(\mathbb{R})}\,\mathrm{d}t
+\int_0^T\int_{\mathbb{R}}\frac{|\partial_x u^\varepsilon|^2}{\sqrt{|\partial_xu^\varepsilon|^2+\varepsilon^2}}\,\mathrm{d}x\,\mathrm{d}t
=\frac{1}{2}\|u^\varepsilon_\mathrm{in}\|_{L^2(\mathbb{R})}^2\,.
\end{split}
\end{equation}
\item
Further assume that the initial data are unformly bounded in $L^2(\mathbb{R}),$ i.e., there is a constant $c>0$ such that for all $\varepsilon>0$ it is in particular 
\begin{equation}
\label{unif-bd-ini}
\frac{1}{2}\|u^\varepsilon_{\mathrm{in}}\|_{L^2(\mathbb{R})}^2\leq c\,.
\end{equation}
Then, there is a constant $C>0$ such that the following bounds hold true for all $\varepsilon>0$ and for all $T>0$:
\begin{subequations}
\label{unif-bd}
\begin{eqnarray}
\label{unif-bd-Linfty-L2}
\|u^\varepsilon(T)\|_{L^2(\mathbb{R})}&\leq&C
\,,\\
\label{unif-bd-viscosity}
\sqrt{\varepsilon}\|\partial_xu^\varepsilon\|_{L^2((0,T)\times\mathbb{R})}&\leq&C\,.
\end{eqnarray}
Additionally, for all $\varepsilon>0,$ all $T>0,$ and all $B_r:=(-r,r)$ with $r>0$ also the following bounds hold true: 
\begin{eqnarray}
\label{unif-bd-Reps-Linfty}
\|R^\varepsilon\|_{L^\infty((0,T)\times B_r)}&\leq& 1
\;\text{ for }
R^\varepsilon:=\frac{\partial_x u^\varepsilon}{\sqrt{|\partial_xu^\varepsilon|^2+\varepsilon^2}}
\,,\\
\label{unif-bd-BVloc}
\|\partial_xu^\varepsilon\|_{L^1((0,T)\times B_r)}&\leq& c+2rT\varepsilon
\,,\\
\label{unif-bd-time-der}
\|\partial_tu^\varepsilon\|_{L^2(0,T;(W_0^{1,\infty}(\mathbb{R}))')}&\leq&C(T)
\,.
\end{eqnarray}
In addition, there are subsequences $(T_\varepsilon)_\varepsilon,$ $(r_\varepsilon)_\varepsilon$ with $T_\varepsilon\nearrow\infty$ and $r_\varepsilon\nearrow\infty$ monotonically as $\varepsilon\to0,$ such that
\begin{equation}
\label{unif-bd-BV}
\|\partial_xu^\varepsilon\|_{L^1((0,T_\varepsilon)\times B_{r_\varepsilon})}\leq c+1
\,.\\
\end{equation}
\end{subequations}
\end{enumerate}
\end{proposition}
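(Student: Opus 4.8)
The plan is to read off every assertion from a single energy identity obtained by testing the regularized equation with the solution itself. Concretely, I would multiply \eqref{eq:b-p-app} by $u^\varepsilon$ and integrate over $B_r\times(0,T)$, using the regularity \eqref{reg-ueps} (in particular $\partial_tu^\varepsilon\in L^\infty(0,T;L^2(\R))$, which makes $t\mapsto\tfrac12\|u^\varepsilon(t)\|_{L^2(\R)}^2$ absolutely continuous and legitimizes all integrations by parts). The time-derivative term yields $\tfrac12\|u^\varepsilon(T)\|_{L^2(\R)}^2-\tfrac12\|u^\varepsilon_{\mathrm{in}}\|_{L^2(\R)}^2$; the advection term integrates to the boundary flux $\tfrac13\int_0^T\big((u^\varepsilon)^3\big)\big|_{x=-r}^{x=r}\,\mathrm{d}t$; integrating the stress and viscous terms by parts produces the two nonnegative bulk terms $\int_0^T\!\!\int_{\R}\frac{|\partial_xu^\varepsilon|^2}{\sqrt{|\partial_xu^\varepsilon|^2+\varepsilon^2}}\,\mathrm{d}x\,\mathrm{d}t$ and $\varepsilon\int_0^T\|\partial_xu^\varepsilon\|_{L^2(\R)}^2\,\mathrm{d}t$, together with the boundary fluxes of $u^\varepsilon R^\varepsilon$ and of $\varepsilon u^\varepsilon\partial_xu^\varepsilon$ evaluated at $x=\pm r$. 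Sending $r\to\infty$ and invoking precisely the three integral far-field relations \eqref{int-far-field1}--\eqref{int-far-field3} annihilates all three boundary contributions and delivers the energy--dissipation balance \eqref{en-dissip-eps}.

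The first-line bounds are then immediate. Since every term on the left of \eqref{en-dissip-eps} is nonnegative, each is bounded by $\tfrac12\|u^\varepsilon_{\mathrm{in}}\|_{L^2(\R)}^2\le c$, which gives \eqref{unif-bd-Linfty-L2} and \eqref{unif-bd-viscosity} at once. The bound \eqref{unif-bd-Reps-Linfty} is a pointwise triviality, $|R^\varepsilon|=|\partial_xu^\varepsilon|/\sqrt{|\partial_xu^\varepsilon|^2+\varepsilon^2}\le1$, independent of any estimate. For the local $\mathrm{BV}$-bound \eqref{unif-bd-BVloc} I would use the elementary inequality $\sqrt{a^2+\varepsilon^2}-a\le\varepsilon$, which rearranges into the pointwise comparison
\begin{equation*}
|\partial_xu^\varepsilon|\le\frac{|\partial_xu^\varepsilon|^2}{\sqrt{|\partial_xu^\varepsilon|^2+\varepsilon^2}}+\varepsilon\,;
\end{equation*}
integrating over $(0,T)\times B_r$, the first summand is controlled by the dissipation term of \eqref{en-dissip-eps} (hence by $c$) and the second by $\varepsilon\,|(0,T)\times B_r|=2rT\varepsilon$, which is exactly \eqref{unif-bd-BVloc}.

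For the time-derivative bound \eqref{unif-bd-time-der} I would rewrite the equation in divergence form $\partial_tu^\varepsilon=-\partial_xF^\varepsilon$ with flux $F^\varepsilon:=\tfrac12(u^\varepsilon)^2-R^\varepsilon-\varepsilon\partial_xu^\varepsilon$, so that for $\phi\in W_0^{1,\infty}(\R)$ one has $\langle\partial_tu^\varepsilon,\phi\rangle=\int_{\R}F^\varepsilon\partial_x\phi\,\mathrm{d}x$ (again the far-field relations justify the absence of boundary terms). The advection contribution is controlled cleanly by $\tfrac12\|u^\varepsilon\|_{L^2(\R)}^2\|\partial_x\phi\|_{L^\infty(\R)}\le c\,\|\phi\|_{W^{1,\infty}(\R)}$ via \eqref{unif-bd-Linfty-L2}, and the viscous contribution by the $\sqrt\varepsilon$-weighted bound \eqref{unif-bd-viscosity}; squaring and integrating in time over $(0,T)$ produces the $T$-dependent constant $C(T)$. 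I expect the \emph{stress contribution} $\int_{\R}R^\varepsilon\partial_x\phi$ to be the main obstacle: $R^\varepsilon$ is only bounded in $L^\infty$ and not uniformly in $L^1(\R)$, so the pairing has to be given meaning through an integration by parts onto $\phi$, combined with the local control \eqref{unif-bd-BVloc} and the decay of $R^\varepsilon$ at infinity, in order to extract a bound that is genuinely uniform in $\varepsilon$.

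Finally, the diagonal statement \eqref{unif-bd-BV} follows from \eqref{unif-bd-BVloc} by a slaving choice of the observation windows: I would take $r_\varepsilon=T_\varepsilon=(2\sqrt\varepsilon)^{-1}$ (or any sequences with $r_\varepsilon,T_\varepsilon\nearrow\infty$ and $2r_\varepsilon T_\varepsilon\varepsilon\le1$), so that the correction term $2r_\varepsilon T_\varepsilon\varepsilon$ in \eqref{unif-bd-BVloc} stays $\le1$ while the windows exhaust $[0,\infty)\times\R$ as $\varepsilon\to0$, giving the claimed $\|\partial_xu^\varepsilon\|_{L^1((0,T_\varepsilon)\times B_{r_\varepsilon})}\le c+1$.
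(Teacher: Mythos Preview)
Your proposal is correct and matches the paper's approach essentially step for step: testing with $u^\varepsilon$ on $(0,T)\times B_r$ and sending $r\to\infty$ via the far-field relations \eqref{int-far-field} for the energy balance, reading off \eqref{unif-bd-Linfty-L2}--\eqref{unif-bd-viscosity} directly, the same pointwise inequality $|\partial_xu^\varepsilon|\le\frac{|\partial_xu^\varepsilon|^2}{\sqrt{|\partial_xu^\varepsilon|^2+\varepsilon^2}}+\varepsilon$ for \eqref{unif-bd-BVloc}, comparison in the equation for \eqref{unif-bd-time-der}, and the diagonal choice $2r_\varepsilon T_\varepsilon\varepsilon\le1$ for \eqref{unif-bd-BV}. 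The one place you diverge is the stress term in the time-derivative bound: the paper does not integrate by parts back onto $\phi$ or invoke decay of $R^\varepsilon$, but simply restricts to test functions $\phi(t,\cdot)$ with compact spatial support (its reading of $W_0^{1,\infty}(\R)$), so that $R^\varepsilon\partial_x\phi\in L^1((0,T)\times\R)$ follows directly from $|R^\varepsilon|\le1$---your anticipated obstacle is thus handled more directly than the workaround you sketch.
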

\begin{proof}
{\bf Proof of energy-dissipation estimate \eqref{en-dissip-eps}: } 
Multiplying equation \eqref{eq:b-p-app} with $u^\varepsilon,$ integrating over $Q_r^T:=(0,T)\times B_r,$ and performing an integration by parts results in 
\begin{equation*}
\begin{split}
0&=\int_0^T\int_{B_r}\Big(
(\partial_t u^\varepsilon)u^\varepsilon
+(u^\varepsilon\partial_xu^\varepsilon)u^\varepsilon
+\partial_x(-\varepsilon\partial_xu^\varepsilon-R^\varepsilon)u^\varepsilon
\Big)\,\mathrm{d}x\,\mathrm{d}t\\
&=\frac{1}{2}\|u^\varepsilon(T)\|_{L^2(B_r)}^2
-\frac{1}{2}\|u^\varepsilon(0)\|_{L^2(B_r)}^2+\int_0^T\int_{B_r}\Big(
\varepsilon(\partial_xu^\varepsilon)^2+R^\varepsilon\partial_xu^\varepsilon
\Big)\,\mathrm{d}x\,\mathrm{d}t\\
&\quad
+\int_0^T\frac{1}{3}\big(
(u^\varepsilon(r,t))^3-(u^\varepsilon(-r,t))^3
\big)\,\mathrm{d}t\\
&\quad
-\int_0^T\varepsilon\Big(u^\varepsilon(r,t)\partial_xu^\varepsilon(r,t)
-u^\varepsilon(-r,t)\partial_xu^\varepsilon(-r,t)\Big)\,\mathrm{d}t\\
&\quad
-\int_0^T\varepsilon\Big(u^\varepsilon(r,t)R^\varepsilon(r,t)
-u^\varepsilon(-r,t)R^\varepsilon(-r,t)\Big)\,\mathrm{d}t\,.
\end{split}
\end{equation*}
Letting $r\to\infty$ and invoking the integral far-field boundary conditions \eqref{int-far-field} in the last three lines above gives \eqref{en-dissip-eps}. 
\\[1ex]
{\bf Proof of a priori estimates \eqref{unif-bd-Linfty-L2} and \eqref{unif-bd-viscosity}: } 
Using the uniform bound \eqref{unif-bd-ini} for the initial data in the energy-dissipation balance \eqref{en-dissip-eps} directly shows \eqref{unif-bd-Linfty-L2} and \eqref{unif-bd-viscosity}.
\\[1ex]
{\bf Proof of \eqref{unif-bd-Reps-Linfty}: }From \eqref{unif-bd-viscosity} we deduce for all $\varepsilon>0$ fixed that 
$|\partial_x u^\varepsilon|<\infty$ $\mathcal{L}^2$-a.e.\ in $(0,T)\times B_r$ for any $T>0$ and any $r>0$. 
Therefore, we can conclude that 
\begin{equation*}
0\leq |R^\varepsilon|=\frac{|\partial_xu^\varepsilon|}{\sqrt{|\partial_xu^\varepsilon|^2+\varepsilon^2}}\leq1\quad\text{ a.e.\ in }(0,\infty)\times\mathbb{R}\,,
\end{equation*}
which is \eqref{unif-bd-Reps-Linfty}.
\\[1ex]
{\bf Proof of \eqref{unif-bd-BVloc}: } Using once more \eqref{unif-bd-ini} in \eqref{en-dissip-eps} provides for any $r>0$
\begin{equation}
\label{unif-bd-BVloc1}
c\geq \int_0^T\int_{\mathbb{R}}\frac{|\partial_x u^\varepsilon|^2}{\sqrt{|\partial_xu^\varepsilon|^2+\varepsilon^2}}\,\mathrm{d}x\,\mathrm{d}t
\geq\int_0^T\int_{B_r}\frac{|\partial_x u^\varepsilon|^2}{\sqrt{|\partial_xu^\varepsilon|^2+\varepsilon^2}}\,\mathrm{d}x\,\mathrm{d}t\,.
\end{equation} 
For any $y\in{\mathbb{R}}^d$ we further calculate that 
\begin{equation}
\label{unif-bd-BVloc2}
\begin{split}
\frac{|y|^2}{\sqrt{|y|^2+\varepsilon^2}}
=&\sqrt{|y|^2+\varepsilon^2}+\frac{|y|^2}{\sqrt{|y|^2+\varepsilon^2}}
-\frac{(\sqrt{|y|^2+\varepsilon^2})^2}{\sqrt{|y|^2+\varepsilon^2}}\\
=&\sqrt{|y|^2+\varepsilon^2}-\frac{\varepsilon^2}{\sqrt{|y|^2+\varepsilon^2}}\\
\geq&\sqrt{|y|^2+\varepsilon^2}-\varepsilon\,.
\end{split}
\end{equation}
Thus, inserting \eqref{unif-bd-BVloc2} into \eqref{unif-bd-BVloc1} results in 
\begin{equation*}
c\geq\int_0^T\int_{B_r}|\partial_xu^\varepsilon|\,\mathrm{d}x\,\mathrm{d}t-2rT\varepsilon\,,
\end{equation*}
which is \eqref{unif-bd-BVloc}.
\\[1ex]
{\bf Proof of \eqref{unif-bd-time-der}: }We test \eqref{eq:b-p-app} with a suitable test function $\phi,$ integrate over 
$(0,T)\times\mathbb{R},$ and perform an integration by parts in space. In particular, assuming that 
$\phi(t,\cdot)$ has compact support for a.a.\ $t\in(0,T)$ thus gives 
\begin{equation}
\label{unif-bd-time-der1}
\int_0^T\int_{\mathbb{R}}
\Big(
\partial_tu^\varepsilon\phi
+\big(\varepsilon\partial_xu^\varepsilon+R^\varepsilon-\frac{(u^\varepsilon)^2}{2}\big)
\partial_x\phi
\Big)\,\mathrm{d}x\,\mathrm{d}t=0\,.
\end{equation}
Estimates \eqref{unif-bd-Linfty-L2}, \eqref{unif-bd-viscosity}, and \eqref{unif-bd-Reps-Linfty} ensure that 
$\partial_x u^\varepsilon\in L^2((0,T)\times\mathbb{R}),$ $R^\varepsilon\in L^\infty((0,T)\times\mathbb{R}),$ and $(u^\varepsilon)^2\in L^\infty(0,\infty;L^1(\mathbb{R}))$. Hence, the term 
$\big(\varepsilon\partial_xu^\varepsilon+R^\varepsilon-\frac{(u^\varepsilon)^2}{2}\big)
\partial_x\phi\in L^1((0,T)\times\mathbb{R})$  for any $\partial_x\phi\in L^2(0,T;L^\infty(\mathbb{R}))$ with compact support, i.e., in particular for $\phi\in L^2(0,T;W^{1,\infty}_0(\mathbb{R}))$. Using once more the uniform bounds \eqref{unif-bd-Linfty-L2}, \eqref{unif-bd-viscosity}, and \eqref{unif-bd-Reps-Linfty}, we now conclude \eqref{unif-bd-time-der}  by comparison in \eqref{unif-bd-time-der1}.
\\[1ex]
{\bf Proof of \eqref{unif-bd-BV}: }This estimate directly follows from \eqref{unif-bd-BVloc} by suitably choosing two monotonically increasing  subsequences satisfying $T_\varepsilon\nearrow\infty,$ $r_\varepsilon\nearrow\infty,$ and $T_\varepsilon r_\varepsilon\leq1/(2\varepsilon)$.  
 \end{proof}
 %
 \subsubsection{Further, improved estimates}
 \label{sec:impro-est}
 %
 Next, we deduce further uniform estimates by testing \eqref{eq:b-p-app} by suitably tailored test functions. For this, for instance, one would like to test \eqref{eq:b-p-app} for $\varepsilon>0$ fixed  with the characteristic  function of $\mathcal{L}^2$-measurable sets $A^\varepsilon\subset(0,\infty)\times\mathbb{R}$, such as $A^\varepsilon:=\{\alpha^\varepsilon>\beta\}$ with some $\mathcal{L}^2$-measurable functions $\alpha^\varepsilon:(0,\infty)\times\mathbb{R}\to\mathbb{R}$ and a constant $\beta\in\mathbb{R}$. In particular, $\alpha^\varepsilon$ here serves as a placeholder for the smooth, approximate solutions $u^\varepsilon$ or its derivatives $\partial_x u^\varepsilon,$ $\partial_t u^\varepsilon$.  
However, the characteristic function of such a set $A^\varepsilon$ is not an admissible test function for \eqref{eq:b-p-app}. Instead, we will use a suitable regularization of it, which can be constructed in terms of a regularization of the Heaviside function by means of Lemma \ref{Help-Indicator}. We point out that this result can be found in \cite[Thm.\ 6.3.2]{dafermosHyperbolicConservationLaws2000}. 
However, for the readers' convience we give the statement and proof of the result here below in the way we use it to conclude the uniform estimates in Proposition \ref{UniBDsImpro} lateron.
\begin{lm}
\label{Help-Indicator}
Let $\rho\in \mathrm{C}^\infty_0(\mathbb{R})$ be a nonnegative, even function such that 
\begin{equation}
\label{ass-rho}
\int_{\mathbb{R}}\rho(y)\,\mathrm{d}y=1
\quad\text{and}\quad
\rho(y)=0\;\text{ for }|y|\geq1\,.
\end{equation}
For any $\sigma\in(0,1)$ define the following function 
\begin{equation}
\label{def-Hsigma}
H^\sigma:\mathbb{R}\to(0,1],\; 
H^\sigma(y):=\int_{-\infty}^y\frac{1}{\sigma}\rho\big(\frac{y'}{\sigma}\big)\,\mathrm{d}y'\,.
\end{equation}
Then, $(H^\sigma)_{\sigma>0}$ approximates the Heaviside function pointwise for all $y\in\mathbb{R}\backslash\{0\}$; more precisely, as $\sigma\to0,$ there holds
\begin{equation}
\label{prop-Hsigma}
H^\sigma(y)\to\left\{\begin{array}{cl}
0&\text{ if }y<0,\\
1/2&\text{ if }y=0,\\
1&\text{ if }y>0.
\end{array}\right.
\end{equation}
Let  $\alpha\in L^p((0,\infty)\times\mathbb{R})$
such that also for all $t\in[0,\infty)$ it is $\alpha(t,\cdot)\in L^p(\mathbb{R})$ for some $p\in[1,\infty)$. Further, let $\beta\in\mathbb{R}$ be a constant and set 
\begin{equation}
\label{def-chisigma}
\chi_{\{\alpha>\beta\}}^\sigma:=H^\sigma(\alpha-\beta)\,.
\end{equation}
Then, as $\sigma\to0$, there holds for all $t\in[0,\infty)$  
\begin{equation}
\label{prop-chi1}
(\alpha(t)-\beta)\chi_{\{\alpha>\beta\}}^\sigma(t)\to
(\alpha(t)-\beta)^+\;\text{ pointwise a.e.\ in }
\mathbb{R}\,,
\end{equation}
where $(\cdot)^+:=\max\{\cdot,0\},$ 
as well as 
\begin{subequations}
\label{prop-chi2}
\begin{eqnarray}
\label{prop-chi2-1}
(\alpha(t)-\beta)\chi_{\{\alpha>\beta\}}^\sigma&\to&(\alpha(t)-\beta)^+\;\text{ in }
L^p(\mathbb{R})\,,
\\
\label{prop-chi2-2}
(\alpha-\beta)\chi_{\{\alpha>\beta\}}^\sigma&\to&(\alpha-\beta)^+\;\quad\text{ in }
L^p((0,\infty)\times\mathbb{R})\,.
\end{eqnarray}
\end{subequations}
Moreover, there holds 
\begin{equation}
\label{prop-chi3}
\frac{\alpha-\beta}{\sigma}\rho\big(\frac{\alpha-\beta}{\sigma}\big)\to0\quad\text{ pointwise a.e.\ in }
(0,\infty)\times\mathbb{R}\quad\text{ as }\sigma\to0\,. 
\end{equation}
Furthermore, let $f\in L^1((0,\infty)\times\mathbb{R})$.  Then, there is a constant $c>0$ such that 
\begin{equation}
\label{prop-chi4}
\left|\int_0^\infty\int_{\mathbb{R}}
\frac{\alpha}{\sigma}\rho\big(\frac{\alpha}{\sigma}\big)f
\,\mathrm{d}x\,\mathrm{d}t\right|
\leq c\int_{\{|\alpha-\beta|<\sigma\}\backslash\{|\alpha-\beta|=0\}}
\hspace*{-5ex}|f|\,\mathrm{d}x\,\mathrm{d}t\to0
\quad\text{ as }\sigma\to0\,.
\end{equation}
\end{lm}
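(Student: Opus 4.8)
The plan is to reduce everything to the elementary structure of $H^\sigma$, which after the substitution $z=y'/\sigma$ in \eqref{def-Hsigma} reads $H^\sigma(y)=\int_{-\infty}^{y/\sigma}\rho(z)\,\mathrm{d}z$. Because $\rho$ is even, nonnegative, supported in $[-1,1]$ and has unit mass by \eqref{ass-rho}, this representation gives at once that $H^\sigma$ takes values in $[0,1]$, with $H^\sigma(y)=0$ for $y\le-\sigma$, $H^\sigma(y)=1$ for $y\ge\sigma$, and $H^\sigma(0)=\int_{-\infty}^0\rho=\tfrac12$. The pointwise limit \eqref{prop-Hsigma} is immediate from these facts: for $y\neq0$ the value is even stationary in $\sigma$ once $\sigma<|y|$, and $H^\sigma(0)=\tfrac12$ for every $\sigma$.

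For \eqref{prop-chi1} I would write $g:=\alpha(t)-\beta$ and evaluate $g\,H^\sigma(g)$ pointwise in $x$: on $\{g>0\}$ one has $H^\sigma(g)\to1$, on $\{g<0\}$ one has $H^\sigma(g)\to0$, and on $\{g=0\}$ the product vanishes identically; combining the three cases yields the limit $g^+=(\alpha(t)-\beta)^+$ a.e. The key quantitative observation, used for \eqref{prop-chi2-1} and \eqref{prop-chi2-2}, is that the difference $g\,H^\sigma(g)-g^+$ is supported in $\{|g|<\sigma\}$ and bounded there by $|g|<\sigma$: it vanishes where $g\ge\sigma$ (then $H^\sigma(g)=1$) and where $g\le-\sigma$ (then $H^\sigma(g)=0$). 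Hence $\bigl|g\,H^\sigma(g)-g^+\bigr|^p\le|g|^p\,\mathbf{1}_{\{|g|<\sigma\}}$, and it remains to show $\int_{\{|g|<\sigma\}}|g|^p\to0$, first over $\R$ for fixed $t$ and then over $(0,\infty)\times\R$; the two computations are identical on the respective measure spaces.

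This last convergence is the only point that requires care, because when $\beta\neq0$ the function $g=\alpha-\beta$ is in general not in $L^p(\R)$, so one cannot simply dominate by $|g|^p$. I would therefore distinguish two cases. If $\beta=0$ then $g=\alpha\in L^p$ and $|g|^p\,\mathbf{1}_{\{|g|<\sigma\}}\le|\alpha|^p$ is an integrable majorant, so dominated convergence applies. If $\beta\neq0$, I restrict to $\sigma<|\beta|/2$ and note that $\{|g|<\sigma\}\subset\{|\alpha|>|\beta|/2\}$, a set of finite measure by Chebyshev's inequality since $\alpha\in L^p$; then $\int_{\{|g|<\sigma\}}|g|^p\le\sigma^p\,|\{|\alpha|>|\beta|/2\}|\to0$. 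The same dichotomy works verbatim on the product space $(0,\infty)\times\R$ for \eqref{prop-chi2-2}.

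Finally, for \eqref{prop-chi3} and \eqref{prop-chi4} I again exploit the support of $\rho$. For fixed $(t,x)$ with $g=\alpha-\beta\neq0$ one has $\rho(g/\sigma)=0$ as soon as $\sigma<|g|$, while the expression vanishes when $g=0$; this gives the pointwise limit \eqref{prop-chi3}. For \eqref{prop-chi4} I observe that $\tfrac{g}{\sigma}\rho(\tfrac{g}{\sigma})$ is supported in $\{|g|<\sigma\}$, vanishes on $\{g=0\}$, and is bounded in modulus by $\|\rho\|_{L^\infty(\R)}=:c$ (since $|g/\sigma|\le1$ there); thus the left-hand integral is estimated by $c\int_{\{|g|<\sigma\}\setminus\{g=0\}}|f|$. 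As $\sigma\downarrow0$ the sets $\{|g|<\sigma\}\setminus\{g=0\}$ decrease to the empty set, so this quantity tends to $0$ by continuity from above of the finite measure $A\mapsto\int_A|f|$, using $f\in L^1((0,\infty)\times\R)$. The main obstacle throughout is the absence of an $L^p$-majorant for $\alpha-\beta$ when $\beta\neq0$, which the Chebyshev argument above circumvents.
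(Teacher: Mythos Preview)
Your argument is correct and follows the same skeleton as the paper's proof: the substitution $z=y'/\sigma$ to rewrite $H^\sigma(y)=\int_{-\infty}^{y/\sigma}\rho(z)\,\mathrm{d}z$, the pointwise three-case analysis for \eqref{prop-Hsigma} and \eqref{prop-chi1}, the support-of-$\rho$ argument for \eqref{prop-chi3}, and the continuity-from-above of the measure $A\mapsto\int_A|f|$ for \eqref{prop-chi4}.

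The one place where you genuinely deviate is in the proof of \eqref{prop-chi2}. The paper simply applies dominated convergence with $|y|^p=|\alpha-\beta|^p$ as majorant, but as you correctly observe this function is not in $L^p$ over $\R$ (resp.\ $(0,\infty)\times\R$) when $\beta\neq0$. Your observation that $gH^\sigma(g)-g^+$ is supported in $\{|g|<\sigma\}$, combined with the Chebyshev inclusion $\{|\alpha-\beta|<\sigma\}\subset\{|\alpha|>|\beta|/2\}$ for $\sigma<|\beta|/2$, is a clean and correct way to handle this case; it buys you a proof that actually works on the unbounded domain as stated, whereas the paper's dominated-convergence step is, strictly speaking, only valid for $\beta=0$ (or on finite-measure domains). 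In the applications downstream the paper either has $\beta=0$ or works on $(0,T)\times B_r$ before passing to the limit, so the gap is harmless in context, but your version of the argument is the one that matches the lemma as written.
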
 
\begin{proof}
{\bf Proof of \eqref{prop-Hsigma}: }With $z:=y'/\sigma,$ the integral is transformed as follows
\begin{equation}
H^\sigma(y)=\int_{-\infty}^y\frac{1}{\sigma}\rho\big(\frac{y'}{\sigma}\big)\,\mathrm{d}y'
=\int_{-\infty}^{y/\sigma}\rho(z)\,\mathrm{d}z\,.
\end{equation}
Accordingly, for $y<0,$ we find that $H^\sigma(y)\to\int_{-\infty}^{-\infty}\rho(z)\,\mathrm{d}z=0$ and for $y>0$ we see that $H^\sigma(y)\to\int_{-\infty}^{\infty}\rho(z)\,\mathrm{d}z=1$ by \eqref{ass-rho}. 
Instead, since $\rho$ is nonnegative, even, and satisfies \eqref{ass-rho}, we conclude that $H^\sigma(0)=1/2$ for all $\sigma>0,$ which yields \eqref{prop-Hsigma}. 
\\[1ex]
{\bf Proof of \eqref{prop-chi1}: }In view of \eqref{prop-Hsigma} we now deduce that
\begin{equation*}
\begin{split}
&(\alpha(t)-\beta)\chi_{\{\alpha>\beta\}}^\sigma(t)\\
&=(\alpha(t)-\beta)H^\sigma(\alpha(t)-\beta)
\to\left\{\begin{array}{cl}
0&\text{ if }(\alpha(t)-\beta)<0,\\
0&\text{ if }(\alpha(t)-\beta)=0,\\
(\alpha(t)-\beta)&\text{ if }(\alpha-\beta)>0
\end{array}\right\}
=(\alpha(t)-\beta)^+\,,
\end{split}
\end{equation*}
which proves \eqref{prop-chi1}.
\\[1ex]
{\bf Proof of \eqref{prop-chi2}: }For brevity we set $y:=(\alpha-\beta)$ and conclude by the dominated convergence theorem that 
\begin{equation*}
\begin{split}
&\|y\chi_{\{y>0\}}^\sigma-(y)^+\|_{L^p((0,\infty)\times\R)}^p\\
&=\int_{\{y<0\}}\Big|y\int_{-\infty}^{y/\sigma}\rho(z)\,\mathrm{d}z\Big|^p\,\mathrm{d}x\,\mathrm{d}t
+\int_{\{y>0\}}\Big|y\Big(1-\int_{-\infty}^{y/\sigma}\rho(z)\,\mathrm{d}z\Big)\Big|^p\,\mathrm{d}x\,\mathrm{d}t\\
&\quad\longrightarrow 0\,,
\end{split}
\end{equation*}
where we used that each of the integrands tends to $0$ pointwise a.e.\ on its domain of integration by \eqref{prop-chi1} and that the function $y$ serves as a $p$-integrable majorant for both of them. 
\\[1ex]
{\bf Proof of \eqref{prop-chi3}: }Again with $y:=\alpha-\beta$ we note that
\begin{equation}
\label{prop-chi3-1}
\frac{y}{\sigma}\rho\big(\frac{y}{\sigma}\big)\in\left\{\begin{array}{cl}
\{0\}&\text{ if }|y|=0,\\
(0,c)&\text{ if }|y|<\sigma,\\
\{0\},&\text{ if }|y|\geq\sigma\\
\end{array}\right.
\end{equation}
with some constant $c>0$. In fact, the existence of the bound $c>0$ for $|y|<\sigma$ 
is due to $|y|/\sigma<1$ and $\rho\in \mathrm{C}^\infty_0(\mathbb{R})$. 
\par 
Since, for any $t\in[0,\infty)$ the function $y(t,\cdot)\in L^p(\mathbb{R})$ by assumption, we have that for a.e.\ $x\in\mathbb{R}$ with $y(t,x)\not=0$ there also holds $|y(t,x)|<\infty$. Hence, there is some $\sigma_x^t>0$ such that $\frac{|y(t,x)|}{\sigma_x^t}\geq1$ and, accordingly 
$\rho\big(\frac{|y(t,x)|}{\sigma_x^t}\big)=0$. Thus, 
\begin{equation}
\frac{|y(t,x)|}{\sigma}\rho\big(\frac{y(t,x)}{\sigma}\big)=0\quad\text{ also for any }\sigma\leq\sigma_x^t\,,
\end{equation}
which proves pointwise convergence for a.e.\ $x$ with $|y(t,x)|>0$. Using that 
$\frac{y(t,x)}{\sigma}\rho\big(\frac{y(t,x)}{\sigma}\big)=0$ for any $x\in\mathbb{R}$ with $y(t,x)=0$ 
completes the proof of convergence property \eqref{prop-chi3}. 
\\[1ex]
{\bf Proof of \eqref{prop-chi4}: }The existence of the constant $c>0$ has been 
justified in \eqref{prop-chi3-1}. Also by \eqref{prop-chi3-1} we further deduce the estimate 
\begin{equation}
\label{prop-chi4-1}
\left|\int_0^\infty\int_{\mathbb{R}}
y\rho(y)f
\,\mathrm{d}x\,\mathrm{d}t\right|
\leq c\int_{\{|y|<\sigma\}\backslash\{|y|=0\}}
\hspace*{-5ex}|f|\,\mathrm{d}x\,\mathrm{d}t\,.
\end{equation}
Moreover, for any $\sigma_2<\sigma_1$ it is $\{|y|<\sigma_2\}\subset\{|y|<\sigma_1\}$ and hence
\begin{equation}
\label{prop-chi4-2}
\lim_{\sigma\to0}\{|y|<\sigma\}=\bigcap_{\sigma>0}\{|y|<\sigma\}=\{|y|=0\}\,.
\end{equation} 
Using \eqref{prop-chi4-2} in \eqref{prop-chi4-1} concludes the proof of \eqref{prop-chi4} for $f\in L^1((0,\infty)\times\mathbb{R})$. 
\end{proof} 
Based on Lemma \ref{Help-Indicator} we deduce the following uniform estimates
\begin{proposition}[Improved estimates and Oleinik's entropy condition]
\label{UniBDsImpro}
Let the assumptions of Proposition \ref{EDB-UniBds} hold true. 
\begin{enumerate}
\item
Further assume that $u_\mathrm{in},u_\mathrm{in}^\varepsilon\in L^\infty(\mathbb{R})\cap L^1(\mathbb{R})$ such that for all $\varepsilon>0$ 
\begin{equation}
\label{ass-uin}
\underline{u}\leq\mathrm{ess\,inf}\{u_\mathrm{in},u_\mathrm{in}^\varepsilon\}\leq\mathrm{ess\,sup}\{u_\mathrm{in},u_\mathrm{in}^\varepsilon\}\leq\bar u
\end{equation} 
uniformly in $\varepsilon$ for two constants $\underline{u}\leq\bar u\in\mathbb{R}$. Then, there also holds for all $\varepsilon>0$
\begin{equation}
\label{unif-bd-ueps-Linfty}
\|u^\varepsilon\|_{L^\infty(0,\infty;L^\infty(\mathbb{R}))}\leq\max\{|\underline{u}|,|\bar u|\}\,,
\end{equation}
as well as
\begin{equation}
\label{unif-bd-ueps-L1}
\|u^\varepsilon\|_{L^\infty(0,\infty;L^1(\mathbb{R}))}\leq \|u^\eps_{\mathrm{in}}\|_{L^1(\R)}\,.
\end{equation}
\item 
There also holds for all $\varepsilon>0$
\begin{equation}\label{unif-bd-ueps-BV}
    \norm{\dx u^\varepsilon}{L^\infty(0,\infty;L^1(\mathbb R))} \leq \norm{\dx u^\varepsilon_\mrm{in}}{L^1(\mathbb R)} 
    \,.
\end{equation}
\item 
The approximating solutions $(u^\varepsilon)_\varepsilon$ also satisfy Oleinik's entropy condition uniformly  for all $\varepsilon>0,$ i.e., for all $t>0$ and for all $x\in\R$ there holds 
\begin{equation}
\label{unif-bd-ueps-Oleinik}
\begin{split}
\dx u^\varepsilon (t,x) < \dfrac{1}{t}\,.
\end{split}    
\end{equation}
\item Assume in addition, that also $\partial_tu_\mathrm{in}^\varepsilon\in  L^1(\mathbb{R})$ such that for all $\varepsilon>0$ 
\begin{equation}
\label{ass-partialt-uin}
\|\partial_tu_\mathrm{in}^\varepsilon\|_{L^1(\mathbb{R})}\leq C
\end{equation} 
uniformly in $\varepsilon$ for a constant $C>0,$ where 
\begin{equation}
\partial_tu^\varepsilon_\mathrm{in}
:=\partial_x\Big(-\frac{(u^\varepsilon_{\mathrm{in}})^2}{2}
+\frac{(u^\varepsilon_{\mathrm{in}})^2}{\sqrt{|u^\varepsilon_{\mathrm{in}}|^2+\varepsilon^2}}
+\varepsilon\partial_xu^\varepsilon_{\mathrm{in}}
\Big)
\end{equation}
 Then, there also holds for all $\varepsilon>0$
\begin{equation}
\label{unif-bd-partialt-ueps-LinftyL1}
\|\partial_tu^\varepsilon\|_{L^\infty(0,\infty;L^1(\mathbb{R}))}\leq C\,.
\end{equation} 
\item Under the assumptions of 1.\ there is a constant $C>0$ such that for all $\varepsilon>0$
\begin{equation}
\label{unif-bd-drifteps-LinftyL1}
\|\partial_x(u^\varepsilon)^2/2\|_{L^\infty(0,\infty;L^1(\mathbb{R}))}\leq C\,.
\end{equation}
\item Under the assumptions of 1.\ and 4.\ there is a constant $C>0$ such that for all $\varepsilon>0$
\begin{equation}
\label{unif-bd-visceps-LinftyL1}
\|\partial_x(R^\varepsilon
+\varepsilon\partial_xu^\varepsilon)\|_{L^\infty(0,\infty;L^1(\mathbb{R}))}
\leq C\,.
\end{equation}
\end{enumerate}
\end{proposition}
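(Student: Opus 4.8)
The plan is to read the desired bound directly off the equation \eqref{eq:b-p-app}, rather than attempting to estimate $\dx R^\varepsilon$ and $\varepsilon\partial_{xx}u^\varepsilon$ separately. The latter route fails to produce a uniform constant: for fixed $\varepsilon>0$ a direct computation gives $\dx R^\varepsilon=\varepsilon^2\partial_{xx}u^\varepsilon/(|\dx u^\varepsilon|^2+\varepsilon^2)^{3/2}$, whose denominator degenerates like $\varepsilon^3$, so bounding term by term only yields a factor of order $1/\varepsilon$. The key observation is that the precise combination appearing in \eqref{unif-bd-visceps-LinftyL1} is exactly the quantity the equation controls.

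First I would rearrange \eqref{eq:b-p-app}. Since $u^\varepsilon$ is a strong solution of the regularity \eqref{reg-ueps} by Theorem \ref{Thm-Ex-Approx-Sol}, equation \eqref{eq:b-p-app} holds pointwise a.e.\ in $(0,\infty)\times\R$. Using that $\dx(R^\varepsilon+\varepsilon\dx u^\varepsilon)=\dx R^\varepsilon+\varepsilon\partial_{xx}u^\varepsilon$, this rearranges into the identity
\[
\dx\big(R^\varepsilon+\varepsilon\dx u^\varepsilon\big)=\dt u^\varepsilon+\dx\big((u^\varepsilon)^2/2\big)\qquad\text{a.e.\ in }(0,\infty)\times\R\,.
\]

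Then I would take the $L^1(\R)$-norm in $x$ for a.e.\ fixed $t$, pass to the essential supremum over $t\in(0,\infty)$, and apply the triangle inequality to obtain
\[
\norm{\dx(R^\varepsilon+\varepsilon\dx u^\varepsilon)}{L^\infty(0,\infty;L^1(\R))}\leq\norm{\dt u^\varepsilon}{L^\infty(0,\infty;L^1(\R))}+\norm{\dx((u^\varepsilon)^2/2)}{L^\infty(0,\infty;L^1(\R))}\,.
\]
The two terms on the right are bounded uniformly in $\varepsilon$ by the previously established estimates \eqref{unif-bd-partialt-ueps-LinftyL1} (item 4) and \eqref{unif-bd-drifteps-LinftyL1} (item 5), respectively; this is precisely why the hypotheses of items 1 and 4 are imposed here. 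Summing the two constants delivers \eqref{unif-bd-visceps-LinftyL1}.

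There is essentially no obstacle internal to this step: it is a bookkeeping corollary of the equation combined with items 4 and 5, where the genuine work — differentiating the equation in time to control $\dt u^\varepsilon$, and the maximum-principle type argument behind the flux bound for $\dx((u^\varepsilon)^2/2)$ — has already been carried out. The only point demanding a moment's care is the functional-analytic justification that the rearranged identity holds pointwise a.e., which is guaranteed by the strong-solution regularity \eqref{reg-ueps}, so that taking $L^1$-norms term by term is legitimate and the constants inherited from \eqref{unif-bd-partialt-ueps-LinftyL1} and \eqref{unif-bd-drifteps-LinftyL1} are genuinely independent of $\varepsilon$.
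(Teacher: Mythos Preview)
Your proof of item 6 is correct and is exactly the approach taken in the paper: after establishing \eqref{unif-bd-partialt-ueps-LinftyL1} and \eqref{unif-bd-drifteps-LinftyL1}, the paper simply says that \eqref{unif-bd-visceps-LinftyL1} follows ``by comparison'' in the equation, which is precisely your rearrangement-plus-triangle-inequality argument spelled out.
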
 
\begin{proof}
{\bf Proof of 1.: }
In order to verify \eqref{unif-bd-ueps-Linfty} we first show that $u^\varepsilon\leq \bar u$ in $(0,\infty)\times\mathbb{R}$. For this, we consider the set $\{u^\varepsilon>\bar u\}$ and the regularization of its characteristic functions $\chi^\sigma_{\{u^\varepsilon>\bar u\}}$ as proposed in Lemma \ref{Help-Indicator}. We shift \eqref{eq:b-p-app} by $-\bar u$ and test the resulting equation by  
$\chi^\sigma_{\{u^\varepsilon>\bar u\}}$. Integrating over $(0,T)\times B_r$ and performing an integration by parts in space thus results in 
\begin{equation}
\label{unif-bd-ueps-Linfty1}
\begin{split}
0=&\int_0^T\int_{B_r}\partial_t\big((u^\varepsilon-\bar u)
\chi^\sigma_{\{u^\varepsilon>\bar u\}}\big)
\,\mathrm{d}x\,\mathrm{d}t
\\
&-\int_0^T\int_{B_r}
(u^\varepsilon-\bar u)\partial_t\chi^\sigma_{\{u^\varepsilon>\bar u\}}\big)
\,\mathrm{d}x\,\mathrm{d}t
\\
&+\int_0^T\int_{B_r}
\partial_xu^\varepsilon(u^\varepsilon-\bar u)\chi^\sigma_{\{u^\varepsilon>\bar u\}}
\,\mathrm{d}x\,\mathrm{d}t
\\
&+\int_0^T\int_{B_r}
\Big(\varepsilon\partial_x(u^\varepsilon-\bar u)
+\frac{\partial_x(u^\varepsilon-\bar u)}{\sqrt{|\partial_xu^\varepsilon|^2+\varepsilon^2}}\Big)
\partial_x\chi^\sigma_{\{u^\varepsilon>\bar u\}}
\,\mathrm{d}x\,\mathrm{d}t
\\
&-\int_0^T\Big[
\Big(\varepsilon\partial_x(u^\varepsilon-\bar u)
+\frac{\partial_x(u^\varepsilon-\bar u)}{\sqrt{|\partial_xu^\varepsilon|^2+\varepsilon^2}}\Big)
\chi^\sigma_{\{u^\varepsilon>\bar u\}}
\Big]_{-r}^r\,\mathrm{d}t\,,
\end{split}
\end{equation}
and we observe that that the last term tends to $0$ as $r\to\infty$ again by the integral far-field relations \eqref{int-far-field}. 
Moreover, for the second-last term in \eqref{unif-bd-ueps-Linfty1} we estimate that 
\begin{equation}
\label{unif-bd-ueps-Linfty2}
\begin{split}
&\int_0^T\int_{B_r}
\Big(\varepsilon\partial_x(u^\varepsilon-\bar u)
+\frac{\partial_x(u^\varepsilon-\bar u)}{\sqrt{|\partial_xu^\varepsilon|^2+\varepsilon^2}}\Big)
\partial_x\chi^\sigma_{\{u^\varepsilon>\bar u\}}
\,\mathrm{d}x\,\mathrm{d}t\\
&=
\int_0^T\int_{B_r}
\Big(\varepsilon\partial_x(u^\varepsilon-\bar u)
+\frac{\partial_x(u^\varepsilon-\bar u)}{\sqrt{|\partial_xu^\varepsilon|^2+\varepsilon^2}}\Big)
\partial_x(u^\varepsilon-\bar u)\frac{1}{\sigma}\rho\big(\frac{u^\varepsilon-\bar u}{\sigma}\big)
\,\mathrm{d}x\,\mathrm{d}t\\
&\geq0\,
\end{split}
\end{equation}
for all $\sigma>0$ and all $\varepsilon>0$ by the non-negativity of $\rho$ claimed in Lemma \ref{Help-Indicator}. 
\par 
Next, we show that the  second term on the right-hand side of \eqref{unif-bd-ueps-Linfty1} tends to $0$ as $\sigma\to0$. For this we  observe that
\begin{equation}
\label{unif-bd-ueps-Linfty3}
\begin{split}
&\left|\int_0^T\int_{B_r}
(u^\varepsilon-\bar u)\partial_t\chi^\sigma_{\{u^\varepsilon>\bar u\}}
\,\mathrm{d}x\,\mathrm{d}t\right|
\\
&\leq\int_0^T\int_{\mathbb{R}}\left|
(u^\varepsilon-\bar u)\partial_t(u^\varepsilon-\bar u)\frac{1}{\sigma}\rho\big(\frac{u^\varepsilon-\bar u}{\sigma}\big)\right|
\,\mathrm{d}x\,\mathrm{d}t
\\
&\leq\int_{\{|u^\varepsilon-\bar u|<\sigma\}}
\left|\partial_t(u^\varepsilon-\bar u)\frac{(u^\varepsilon-\bar u)}{\sigma}\rho\big(\frac{u^\varepsilon-\bar u}{\sigma}\big)\right|
\,\mathrm{d}x\,\mathrm{d}t
\\
&\leq c\int_{\{|u^\varepsilon-\bar u|<\sigma\}\backslash\{|u^\varepsilon-\bar u|=0\}}
|\partial_t(u^\varepsilon-\bar u)|\,\mathrm{d}x\,\mathrm{d}t
\\
&\to0%
\end{split}
\end{equation} 
as $\sigma\to0$ by property \eqref{prop-chi4} from Lemma \ref{Help-Indicator}.
\par 
Thus, inserting \eqref{unif-bd-ueps-Linfty2} and \eqref{unif-bd-ueps-Linfty3} in \eqref{unif-bd-ueps-Linfty1} and letting $r\to\infty$ and $\sigma\to0,$ results in the following estimate for all $T>0$
\begin{equation}
\label{unif-bd-ueps-Linfty4}
\begin{split}
&\int_\mathbb{R}\Big((u^\varepsilon(T)-\bar u)^+-(u^\varepsilon_\mathrm{in}-\bar u)^+\Big)\,\mathrm{d}x
\\
&=\lim_{\sigma\to0}\int_{\mathbb{R}}(u^\varepsilon(T)-\bar u)\chi^\sigma_{\{u^\varepsilon>\bar u\}}(T)-(u^\varepsilon_\mathrm{in}-\bar u)^+\chi^\sigma_{\{u^\varepsilon_{\mathrm{in}}>\bar u\}}
\,\mathrm{d}x\,\mathrm{d}t
\\
&=\lim_{\sigma\to0}\int_0^T\int_{\mathbb{R}}\partial_t\big((u^\varepsilon-\bar u)
\chi^\sigma_{\{u^\varepsilon>\bar u\}}\big)
\,\mathrm{d}x\,\mathrm{d}t\\
&\leq 
\lim_{\sigma\to0}\Big(-\int_0^T\int_{\mathbb{R}}
\partial_xu^\varepsilon(u^\varepsilon-\bar u)\chi_{\{u^\varepsilon>\bar u\}}^\sigma
\,\mathrm{d}x\,\mathrm{d}t
\\
&\qquad\qquad
+c\int_{\{|u^\varepsilon-\bar u|<\sigma\}\backslash\{|u^\varepsilon-\bar u|=0\}}
|\partial_t(u^\varepsilon-\bar u)|\,\mathrm{d}x\,\mathrm{d}t
\Big)\\
&=
-\int_0^T\int_{\mathbb{R}}
\partial_xu^\varepsilon(u^\varepsilon-\bar u)\chi_{\{u^\varepsilon>\bar u\}}
\,\mathrm{d}x\,\mathrm{d}t\\
&\leq 
\|\partial_xu^\varepsilon\|_{L^\infty((0,\infty)\times\mathbb{R})}
\int_0^T\int_{\mathbb{R}}
(u^\varepsilon-\bar u)\chi_{\{u^\varepsilon>\bar u\}}
\,\mathrm{d}x\,\mathrm{d}t\\
&=\|\partial_xu^\varepsilon\|_{L^\infty((0,\infty)\times\mathbb{R})}
\int_0^T\int_{\mathbb{R}}
(u^\varepsilon-\bar u)^+
\,\mathrm{d}x\,\mathrm{d}t
\,,
\end{split}
\end{equation} 
where we have used the convergence properties \eqref{prop-chi2-1}, \eqref{prop-chi2-2}, and \eqref{prop-chi4} from Lemma \ref{Help-Indicator}. We also note that, by regularity properties 
\eqref{reg-ueps1}, we indeed have that $\partial_x u^\varepsilon\in L^\infty((0,\infty)\times\mathbb{R})$ for every $\varepsilon>0$. 
\par
Further observing that $\|(u^\varepsilon(\cdot)-\bar u)^+\|_{L^1(\mathbb{R})}:[0,\infty)\to[0,\infty)$ is continuous by the regularity properties \eqref{reg-ueps}, the integral version of Gr\"onwall's inequality now allows us to conclude from \eqref{unif-bd-ueps-Linfty4} that
\begin{equation}
\label{unif-bd-ueps-L10}
\|(u^\varepsilon(T)-\bar u)^+\|_{L^1(\mathbb{R})}
\leq 
\|(u^\varepsilon_\mathrm{in}-\bar u)^+\|_{L^1(\mathbb{R})}\exp\left(
T\|\partial_xu^\varepsilon\|_{L^\infty((0,\infty)\times\mathbb{R})}
\right)=0
\end{equation}
for all $T>0$ and all $\varepsilon>0,$ where the right-hand side in \eqref{unif-bd-ueps-L10} is $0$ thanks to assumption \eqref{ass-uin}. Hence, we conclude for every $T\geq0,$ $\varepsilon>0$ 
that $u^\varepsilon(T)\leq \bar u$ a.e.\ in $\mathbb{R}$. By the continuity of $u^\varepsilon(T):\mathbb{R}\to\mathbb{R}$ for every $T>0$ fixed this estimate is in fact true everywhere in $\mathbb{R},$ which proves the assertion. 
\par 
Next, it has to be shown that also $u^\varepsilon\geq\underline{u}$ in $(0,\infty)\times\mathbb{R}$ holds true. 
For this, we consider $(u^\varepsilon-\underline{u})_-:=-\min\{u^\varepsilon-\underline{u},0\}$ and, correspondingly, the set $\{u^\varepsilon<\underline{u}\}=\{-\underline{u}<-u^\varepsilon\}$.  Following the lines of the above proof, \eqref{eq:b-p-app} is shifted by $-\underline{u}$ and the sign is changed by a multiplication by $-1$. Now, again Lemma \ref{Help-Indicator} can be applied to $(-u^\varepsilon-(-\underline{u}))\chi_{\{-u^\varepsilon>\underline{u}\}}$ in order to deduce an estimate analogous to \eqref{unif-bd-ueps-Linfty4} for $(u^\varepsilon-\underline{u})_-$. Invoking again Gr\"onwall's inequality and assumption \eqref{ass-uin} as well as the continuity of $u^\varepsilon$ ultimately results in the desired estimate. Alltogether we have thus verified \eqref{unif-bd-ueps-Linfty}. 
\par 
Now we verify the $L^1$-bound \eqref{unif-bd-ueps-L1}. For this we notice that integrating 
equation \eqref{eq:b-p-app} over $(0,t)\times\R$ and performing an integration by parts gives
\begin{equation}
\label{unif-bd-ueps-L11}
\int_{\R} u^\eps(t)\,\mathrm{d}x=\int_{\R} u^\eps_{\mathrm{in}}\,\mathrm{d}x\,.
\end{equation} 
Moreover, multiplying \eqref{eq:b-p-app} with the regularized version $\chi_{\{u^\eps>0\}}^\sigma$ 
of the characteristic function $\chi_{\{u^\eps>0\}}$ and performing similar arguments as above, leads to 
\begin{equation}
\frac{\mathrm{d}}{\mathrm{d}t}\int_{\R}u^\eps\chi_{\{u^\eps>0\}}\,\mathrm{d}x\leq0\,.
\end{equation}
Integrating the above inequality in time yields
\begin{equation}
\label{unif-bd-ueps-L12}
\int_{\R}(u^\eps(t))^+\,\mathrm{d}x\leq\int_{\R}(u^\eps_{\mathrm{in}})^+\,\mathrm{d}x\,.
\end{equation}
Consequently one obtains from \eqref{unif-bd-ueps-L11} and \eqref{unif-bd-ueps-L12} that also 
\begin{equation}
\label{unif-bd-ueps-L13}
\int_{\R}(u^\eps(t))^-\,\mathrm{d}x\leq\int_{\R}(u^\eps_{\mathrm{in}})^-\,\mathrm{d}x\,,
\end{equation}
which allows us to conclude \eqref{unif-bd-ueps-L1}.
\par 
\noindent
{\bf Proof of 2.: }Applying $ \dx $ to \eqref{eq:b-p-app} yields
\begin{equation}
\label{unest:009}
    \dt \dx u^\varepsilon + 
    \partial_{xx}\frac{(u^{\varepsilon})^2}{2}
    - \varepsilon^2 \dx ( \dfrac{\partial_{xx} u^\varepsilon}{(\vert \dx u^\varepsilon\vert^2 + \varepsilon^2)^{3/2}}) - \varepsilon \partial_{xx} (\dx u^\varepsilon) = 0.
\end{equation}
Let $ \chi^\sigma_{\lbrace\dx u^\varepsilon > 0\rbrace} $ be the regularized characteristic function of the set $\{\dx u^\varepsilon > 0\}$ as introduced in Lemma \ref{Help-Indicator}.  
As for Item 1, we multiply \eqref{unest:009} with $ \chi^\sigma_{\lbrace \dx u^\varepsilon > 0\rbrace}, $ integrate over $(0,T)\times B_r,$ and perform an integration by parts in space to find 
\begin{equation}
\label{unif-bd-ueps-BV1}
\begin{split}
0=&\int_0^T\int_{B_r} \dt \big(\dx u^\varepsilon\chi^\sigma_{\lbrace \dx u^\varepsilon > 0\rbrace}\big) \,\mathrm{d}x\,\mathrm{d}t\,
-\int_0^T\int_{B_r}\dx u^\varepsilon\dt\chi^\sigma_{\lbrace \dx u^\varepsilon > 0\rbrace} \,\mathrm{d}x\,\mathrm{d}t\\
&-\int_0^T\int_{B_r} \frac{\partial_x(u^{\varepsilon})^2}{2}\partial_x\chi^\sigma_{\lbrace \dx u^\varepsilon > 0\rbrace}\,\mathrm{d}x\,\mathrm{d}t
+\int_0^T\Big[\frac{\partial_x(u^{\varepsilon})^2}{2}\chi^\sigma_{\lbrace \dx u^\varepsilon > 0\rbrace}\Big]_{-r}^r\,\mathrm{d}t\\
&+\int_0^T\int_{B_r}  \Big(\varepsilon^2  \dfrac{\partial_{xx} u^\varepsilon}{(\vert \dx u^\varepsilon\vert^2 + \varepsilon^2)^{3/2}} + \varepsilon \partial_{xx} u^\varepsilon\Big)\partial_x\chi^\sigma_{\lbrace \dx u^\varepsilon > 0\rbrace} \,\mathrm{d}x\,\mathrm{d}t\\
&-\int_0^T\Big[
\Big(\varepsilon^2  \dfrac{\partial_{xx} u^\varepsilon}{(\vert \dx u^\varepsilon\vert^2 + \varepsilon^2)^{3/2}} + \varepsilon \partial_{xx} u^\varepsilon\Big)\chi^\sigma_{\lbrace \dx u^\varepsilon > 0\rbrace}
\Big]_{-r}^r\,\mathrm{d}t\,.
\end{split}
\end{equation}
Arguing along the lines of Item 1 by making use of Lemma \ref{Help-Indicator} and of the regularity properties \eqref{reg-ueps} of the approximating solutions, we deduce for the different terms in \eqref{unif-bd-ueps-BV1} that
\begin{subequations}
\label{unif-bd-ueps-BV2}
\begin{align}
\nonumber
&\left|\int_0^T\int_{B_r}\dx u^\varepsilon\dt\chi^\sigma_{\lbrace \dx u^\varepsilon > 0\rbrace} \,\mathrm{d}x\,\mathrm{d}t\right|\\
&\leq c\int_{\{|\partial_xu^{\varepsilon}|<\sigma\}\backslash\{|\partial_xu^{\varepsilon}|=0\}}|\partial_t\partial_xu^{\varepsilon}|\,\mathrm{d}x\,\mathrm{d}t
\to0\quad
\text{ as }\sigma\to0\text{ for all }r>0\,,\\[1ex]
\nonumber
&\left|\int_0^T\int_{B_r} \frac{\partial_x(u^{\varepsilon})^2}{2}\partial_x\chi^\sigma_{\lbrace \dx u^\varepsilon > 0\rbrace}
\,\mathrm{d}x\,\mathrm{d}t\right|\\
&\leq c\int_{\{|\partial_xu^{\varepsilon}|<\sigma\}\backslash\{|\partial_xu^{\varepsilon}|=0\}}\hspace*{-5ex}
|u^{\varepsilon}\partial_{xx}\partial_xu^{\varepsilon}|\,\mathrm{d}x\,\mathrm{d}t
\to0\quad
\text{ as }\sigma\to0\text{ for all }r>0\,,\\[1ex]
&\int_0^T\Big[\frac{\partial_x(u^{\varepsilon})^2}{2}\chi^\sigma_{\lbrace \dx u^\varepsilon > 0\rbrace}\Big]_{-r}^r\,\mathrm{d}t
\to0\;\;\text{ as }r\to\infty\text{ for all }\sigma>0\,,\\[1ex]
&\int_0^T\int_{B_r}  \Big(\varepsilon^2  \dfrac{\partial_{xx} u^\varepsilon}{(\vert \dx u^\varepsilon\vert^2 + \varepsilon^2)^{3/2}} + \varepsilon \partial_{xx} u^\varepsilon\Big)\partial_x\chi^\sigma_{\lbrace \dx u^\varepsilon > 0\rbrace} \,\mathrm{d}x\,\mathrm{d}t
\geq0\\
\nonumber
&\hspace*{0.55\textwidth}\;\text{ for all }\sigma>0,\,r>0\,,\\[1ex]
&\int_0^T\Big[
\Big(\varepsilon^2  \dfrac{\partial_{xx} u^\varepsilon}{(\vert \dx u^\varepsilon\vert^2 + \varepsilon^2)^{3/2}} + \varepsilon \partial_{xx} u^\varepsilon\Big)\chi^\sigma_{\lbrace \dx u^\varepsilon > 0\rbrace}
\Big]_{-r}^r\,\mathrm{d}t\;\to0\;\text{ as }r\to\infty
\text{ for all }\sigma>0\,.
\end{align}
\end{subequations}
Putting the findings \eqref{unif-bd-ueps-BV2} together with \eqref{unif-bd-ueps-BV1} and letting first $r\to\infty$ and then $\sigma\to0$ 
results in the following estimate for all $T>0$
\begin{equation*}
\begin{split}
\int_{\mathbb{R}}\big(\partial_xu^\varepsilon(T)\big)^+-\big(\partial_xu^\varepsilon_{\mathrm{in}}\big)^+\,\mathrm{d}x
&
=\lim_{\sigma\to0}\int_0^T\frac{\mathrm{d}}{\mathrm{d}t}\int_{\mathbb{R}} \big(\dx u^\varepsilon\chi^\sigma_{\lbrace \dx u^\varepsilon > 0\rbrace}\big) \,\mathrm{d}x\,\mathrm{d}t\\
&\leq\lim_{\sigma\to0} c\int_{\{|\partial_xu^{\varepsilon}|<\sigma\}\backslash\{|\partial_xu^{\varepsilon}|=0\}}\hspace*{-5ex}
|u^{\varepsilon}\partial_{xx}\partial_xu^{\varepsilon}|\,\mathrm{d}x\,\mathrm{d}t\\
&\hspace*{0.2\textwidth}
+\lim_{\sigma\to0}c\int_{\{|\partial_xu^{\varepsilon}|<\sigma\}\backslash\{|\partial_xu^{\varepsilon}|=0\}}|\partial_t\partial_xu^{\varepsilon}|\,\mathrm{d}x\,\mathrm{d}t\\
&=0\,,
\end{split}
\end{equation*}
which yields for all $T>0$ 
\begin{equation}\label{unest:011}
    \int_{\mathbb{R}} (\dx u^\varepsilon(T))^+ \,\mathrm{d}x \leq \int_{\mathbb{R}} (\dx u^\varepsilon_\mrm{in})^+ \,\mathrm{d}x\,.
\end{equation}
In addition, thanks to \eqref{bc:far-field}, one has for all $T\geq0$
\begin{equation}
\label{unest:011a}
    \int_{\mathbb{R}} \dx u^\varepsilon(T) \,\mathrm{d}x = 0\,. 
\end{equation}
Therefore  one can conclude from \eqref{unest:011} and  \eqref{unest:011a} that
\begin{equation}\label{unest:012}
    \norm{\dx u^\varepsilon}{L^\infty(0,\infty;L^1(\mathbb R))} \leq 
    \norm{\dx u^\varepsilon_\mrm{in}}{L^1(\mathbb R)}\,.
\end{equation}
\par
\noindent
{\bf Proof of 3.\ Oleinik's entropy estimate: }
Consider the following ODE Cauchy problem 
\begin{subequations}
\begin{align}
    &\dfrac{\mathrm{d}}{\mathrm{d}t} Q + Q^2 = 0\quad\text{for }t>0\,,\\
   & Q(0) = Q_0 := \max \big\{ 1, \dx u^\varepsilon_\mrm{in} \big\}\,.
\end{align}
\end{subequations}
Solving for $ Q $ results in 
\begin{equation}
\label{unest:013}
\begin{split}
    Q(t)&=\dfrac{1}{t + Q_0^{-1}}=\frac{1}{t+\frac{1}{\max\{1,\partial_xu^\eps_{\mathrm{in}}\}}}
    <\frac{1}{t} \qquad\text{ for } t > 0\,.
\end{split}    
\end{equation}
Now, we rewrite \eqref{unest:009} as
\begin{equation}\label{unest:014}
    \begin{split}
        \dt (\dx u^\varepsilon - Q) + u^\varepsilon \dx (\dx u^\varepsilon - Q)
        &+ (\dx u^\varepsilon + Q ) (\dx u^\varepsilon - Q) \\
        &- \varepsilon^2 \dx ( \dfrac{\partial_{x} (\dx u^\varepsilon - Q)}{(\vert \dx u^\varepsilon\vert^2 + \varepsilon^2)^{3/2}}) - \varepsilon \partial_{xx} (\dx u^\varepsilon - Q) = 0\,. 
    \end{split}
\end{equation}
Then taking the $ L^2 $-inner product of \eqref{unest:014} with the regularized version $\chi^\sigma_{\lbrace \dx u^\varepsilon > Q\rbrace}$ of the characteristic function 
$\chi_{\lbrace \dx u^\varepsilon > Q\rbrace}$ and repeating the arguments for the limit procedure $\sigma\to0$ as in Items 1--2, ultimately yields
\begin{equation}
    \dfrac{\mathrm{d}}{\mathrm{d}t}\int_{\R} (\dx u^\varepsilon - Q) \chi_{\lbrace \dx u^\varepsilon > Q\rbrace} \idx 
    \leq - \int_{\R} Q (\dx u^\varepsilon - Q) \chi_{\lbrace \dx u^\varepsilon > Q\rbrace} \idx\,.
\end{equation}
Therefore, integrating the above inequality in time leads to
\begin{equation}
\label{unest:015}
    \int_{\R} (\dx u^\varepsilon(t) - Q(t))^+ \idx 
    \leq e^{\int_0^t Q(s)\,\mathrm{d}s} \int_{\R} (\dx u^\varepsilon_\mrm{in} - Q_0)^+ \idx = 0\,. 
\end{equation}
Inserting \eqref{unest:013} and taking into account that $u^\eps(t,\cdot):\R\to\R$ is continuous, we arrive at Oleinik's entropy estimate
\begin{equation}\label{unest:016}
\dx u^\varepsilon (t,x) <  
    \frac{1}{t}\qquad \text{for all } t > 0\text{ and }x\in\R\,.
\end{equation}
{\bf Proof of 4., estimate \eqref{unif-bd-partialt-ueps-LinftyL1}:}
We first calculate $ \norm{\dt u^\varepsilon(t)}{L^1(\mathbb R)}$ for any $ t > 0 $. Applying $ \dt $ to \eqref{eq:b-p-app} yields
\begin{equation}\label{eq:dt-b-p}
    \begin{gathered}
        \dt (\dt u^\varepsilon) + u^\varepsilon \dx(\dt u^\varepsilon) + \dt u^\varepsilon \dx u^\varepsilon - \varepsilon^2 \dx \Big( \dfrac{\dx(\dt u^\varepsilon)}{(\vert\dx u^\varepsilon \vert^2 + \varepsilon^2)^{3/2}} \Big) 
        - \varepsilon \partial_{xx} (\dt u^\varepsilon) = 0\,.
    \end{gathered}
\end{equation}
Taking the $ L^2 $-inner product of \eqref{eq:dt-b-p} with the regularized version $ \chi^\sigma_{\lbrace \dt u^\varepsilon > 0 \rbrace} $ of the characteristic function $ \chi_{\lbrace \dt u^\varepsilon > 0 \rbrace} $ and repeating the arguments of Items 1--2 as $\sigma\to 0,$ results in 
\begin{equation}\label{unest:017}
    \dfrac{\mathrm{d}}{\mathrm{d}t}\int_{\R} \dt u^\varepsilon \chi_{\lbrace \dt u^\varepsilon > 0 \rbrace} \idx \leq 0\,.
\end{equation}
Integrating \eqref{unest:017} gives
\begin{equation}\label{unest:018}
    \int_{\R} (\dt u^\varepsilon(t))^+ \idx \leq \int_{\R} (\dt u^\varepsilon)_\mrm{in}^+ \idx\,,
\end{equation}
where
\begin{equation}\label{unest:019}
    (\dt u^\varepsilon)_\mrm{in} := - \dx({(u^\varepsilon_\mrm{in})^2}/{2}) + \dx (\dfrac{\dx u^\varepsilon_\mrm{in}}{\sqrt{\vert \dx u^\varepsilon_\mrm{in} \vert^2 + \varepsilon^2}}) + \varepsilon \partial_{xx} u^\varepsilon_\mrm{in}\,.
\end{equation}
Repeating the argument with the characteristic function $ \chi_{\lbrace \dt u^\varepsilon <0 \rbrace} $ results in an estimate for 
$(\partial_t u^\eps)^-$ akin to \eqref{unest:018} and hence we conclude  
\begin{equation}\label{unest:020}
    \int_{\R} \vert \dt u^\varepsilon(t) \vert \idx \leq \big\|(\dt u^\varepsilon)_\mrm{in}\big\|_{L^1(\mathbb R)} \qquad \text{ for all }t > 0\,.
\end{equation}
\vspace*{1ex}
{\bf Proof of 5.\ and 6.: } Thanks to the uniform estimates \eqref{unif-bd-ueps-Linfty} and \eqref{unif-bd-ueps-BV} one also finds 
\eqref{unif-bd-drifteps-LinftyL1}, i.e.\ by means of H\"older's inequality there follows 
\begin{equation*}
\|\partial_x(u^\eps)^2/2\|_{L^\infty(0,\infty;L^1(\R))}
\leq  \|u^\eps\|_{L^\infty(0,\infty;L^\infty(\R))}
\|\partial_xu^\eps\|_{L^\infty(0,\infty;L^1(\R))}
\leq \bar u\|u^\eps_{\mathrm{in}}\|_{L^1(\R)}\,.
\end{equation*} 
Thus, together with \eqref{unif-bd-partialt-ueps-LinftyL1}, one ultimately concludes \eqref{unif-bd-visceps-LinftyL1} by comparison.
\end{proof}
%
\subsection{Limit passage $\varepsilon\to0$ in the weak formulation of \eqref{ibvp-approx}}
\label{Sec-BV-sol-plast}
%
The goal of this subsection is to pass to the limit $\varepsilon\to0$ in the weak formulation \eqref{weak-ibvp-approx}. For this, suitable compactness properties will be concluded from the uniform a priori bounds \eqref{unif-bd}. While most of the terms in \eqref{weak-ibvp-approx} will converge to the limit by means of weak convergence properties, it should be noted that the identification of the limit for the quadratic term $(u^\varepsilon)^2/2$ as the square of the limit function requires strong convergence of the sequence in $L^2_{\mathrm{loc}}((0,\infty)\times\mathbb{R})$. This will be concluded with the aid of the following Aubin-Lions-type convergence result:
\begin{thm}[{\cite[Sec.\ 9, Cor.\ 6]{Sim87CSSL}}]
\label{Help-Simon}
Consider the Banach spaces $A\subset B\subset C$ with $A\subset B$ compactly. Let $1<q\leq\infty$ and consider a family $F$  of functions such that $F$ is bounded in 
$L^q((0,T);B)\cap L^1_\mathrm{loc}((0,T);A)$ and such that $\frac{\partial F}{\partial t}$ is bounded in $L^1_\mathrm{loc}(0,T;C)$. Then, the family $F$ is relatively compact in $L^p((0,T);B)$ for all $p<q$. 
\end{thm}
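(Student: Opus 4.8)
The plan is to deduce the result from the general characterization of relative compactness in the vector-valued space $L^p((0,T);B)$: a family that is bounded in $L^p((0,T);B)$ (which holds here since $p<q$ and the time interval is finite) is relatively compact if and only if (i) for every $0<t_1<t_2<T$ the set $\big\{\int_{t_1}^{t_2}f(t)\,\mathrm{d}t:f\in F\big\}$ is relatively compact in $B$, and (ii) the time-translates are equi-small, i.e.\ $\sup_{f\in F}\|\tau_hf-f\|_{L^p((0,T-h);B)}\to0$ as $h\to0$, where $(\tau_hf)(t):=f(t+h)$. The central tool for verifying (ii) is the Ehrling--Lions interpolation inequality: since $A\hookrightarrow B$ compactly and $B\hookrightarrow C$ continuously, for every $\eta>0$ there is a constant $C_\eta>0$ with $\|v\|_B\le\eta\|v\|_A+C_\eta\|v\|_C$ for all $v\in A$. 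I would establish this by a standard contradiction argument: normalizing $\|v_n\|_B=1$ along a would-be counterexample sequence forces $(v_n)$ to be bounded in $A$, hence precompact in $B$, while $\|v_n\|_C\to0$ forces the $B$-limit to vanish, contradicting $\|v_n\|_B=1$.

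For condition (i), I would note that for $[t_1,t_2]\subset(0,T)$ the averages satisfy $\big\|\int_{t_1}^{t_2}f\,\mathrm{d}t\big\|_A\le\|f\|_{L^1((t_1,t_2);A)}$, which is uniformly bounded by the $L^1_\mathrm{loc}((0,T);A)$-hypothesis; the compact embedding $A\hookrightarrow B$ then yields relative compactness of these averages in $B$. For condition (ii) I would first work on a compact subinterval $[a,b]\subset(0,T)$. The fundamental theorem of calculus in $C$ gives $\|f(t+h)-f(t)\|_C\le\int_t^{t+h}\|\partial_tf(s)\|_C\,\mathrm{d}s$, whence $\|\tau_hf-f\|_{L^1((a,b-h);C)}\le h\,\|\partial_tf\|_{L^1((a,b);C)}$, which is $O(h)$ uniformly in $F$. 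Applying the Ehrling inequality pointwise to $v=f(t+h)-f(t)\in A$ and integrating in time bounds $\|\tau_hf-f\|_{L^1((a,b-h);B)}$ by $\eta$ times the (uniform) $A$-bound plus $C_\eta$ times the $O(h)$ translate bound in $C$; choosing first $\eta$ small and then $h$ small makes this quantity arbitrarily small, uniformly in $f\in F$. I would then upgrade from $L^1$- to $L^p$-smallness in the time variable by interpolating against the \emph{global} uniform bound in $L^q((0,T);B)$, using $\|g\|_{L^p}\le\|g\|_{L^1}^\theta\|g\|_{L^q}^{1-\theta}$ with $\tfrac1p=\theta+\tfrac{1-\theta}{q}$, which is available precisely because $p<q$.

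It remains to pass from compactness on interior subintervals to compactness on all of $(0,T)$, and this is where I expect the main obstacle to lie, since the hypotheses in $A$ and on $\partial_tf$ are only local in time whereas the conclusion is global. Here the global $L^q((0,T);B)$-bound controls the tails: for $p<q$ one has, by Hölder's inequality, $\|f\|_{L^p((0,a);B)}\le a^{1/p-1/q}\|f\|_{L^q((0,T);B)}$ (and symmetrically near $T$), which tends to $0$ as $a\to0$ uniformly in $F$ because $1/p-1/q>0$. Taking an exhaustion $(a_k,b_k)\nearrow(0,T)$, a diagonal extraction yields a subsequence converging in $L^p((a_k,b_k);B)$ for every $k$; combining this interior convergence with the uniformly small tails shows the subsequence is Cauchy in $L^p((0,T);B)$, proving relative compactness. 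The strict inequality $p<q$ enters exactly twice, in the time-interpolation and in the tail estimate, and the limiting case $q=\infty$ is covered verbatim upon setting $1/q=0$.
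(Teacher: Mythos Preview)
The paper does not supply a proof of this theorem; it is quoted verbatim as a known result from Simon \cite[Sec.~9, Cor.~6]{Sim87CSSL} and then applied as a black box in the proof of Theorem~\ref{Ex-BV-Sol-Limit} to extract the strong $L^2_{\mathrm{loc}}$-convergence \eqref{conv-ueps-L2}. There is therefore nothing in the paper to compare your argument against.

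That said, your sketch is correct and is in fact the route Simon himself takes: the criterion for relative compactness in $L^p((0,T);B)$ via relatively compact time-averages plus equi-integrable time-translates, Ehrling's lemma to trade the $A$-bound against smallness in $C$, interpolation between the $L^1$-translate estimate and the global $L^q((0,T);B)$-bound to reach $L^p$, and finally the H\"older tail estimate $\|f\|_{L^p((0,a);B)}\le a^{1/p-1/q}\|f\|_{L^q((0,T);B)}$ to pass from interior intervals to all of $(0,T)$. Your identification of the two places where $p<q$ is genuinely needed (the interpolation step and the tail control) is accurate, and the diagonal extraction over an exhaustion $(a_k,b_k)\nearrow(0,T)$ is the standard way to globalize.
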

Based on this we will now show the existence of a limit of the approximate solutions that also satisfies the weak formulation: 
\begin{thm}[Existence of $\mathrm{BV}$-solutions for Cauchy problem {\eqref{eq:SaB}}]
\label{Ex-BV-Sol-Limit}
Let the assumptions of Thm.\ \ref{Thm-Ex-Approx-Sol} and Prop.\ \ref{EDB-UniBds} be satisfied and let $(u^\varepsilon)_\varepsilon$ be a sequence of solutions to the approximate Cauchy problems \eqref{ibvp-approx} obtained by Thm.\ \ref{Thm-Ex-Approx-Sol}. 
Further assume that the sequence of initial data $(u^\eps_{\mathrm{in}})_\eps$ satisfies the following compatibility condition 
\begin{equation}
\|\partial_t u^\eps(0)\|_{L^1(\R)}
=\Big\|\partial_x\Big(\frac{(u^\eps_{\mathrm{in}})^2}{2}-\frac{\partial_xu^\eps_{\mathrm{in}}}{\sqrt{\partial_xu^\eps_{\mathrm{in}}+\eps^2}}-\eps\partial_xu^\eps_{\mathrm{in}}\Big)\Big\|_{L^1(\R)}
\end{equation}
and suitably approximates a limit function 
\begin{equation}
u_\mathrm{in}\in L^\infty(\R)\cap\mathrm{BV}(\R)\cap L^2(\R)\,.
\end{equation}
Then, the following statements hold true:
\begin{enumerate}
\item 
There exists a (not relabeled) subsequence $(u^\varepsilon)_\varepsilon$ and a limit pair 
$(u,R)$ of the following regularity 
\begin{subequations}
\label{reg-limit}
\begin{eqnarray}
\label{reg-u}
u&\in& L^1_\mathrm{loc}(0,\infty;\mathrm{BV}_\mathrm{loc}(\mathbb{R}))\cap L^1_\mathrm{loc}(0,\infty;\mathrm{TV}(\mathbb{R})) \cap L^\infty(0,\infty;L^2(\mathbb{R}))\quad\text{ and }\\
\label{reg-u1}
u&\in&  L^\infty(0,\infty;L^\infty(\R)\cap\mathrm{BV}(\mathbb{R}))\cap\mathrm{C}([0,\infty);L^1(\R))\cap\mathrm{C}([0,\infty);L^2(\R))\,,
\\
\label{reg-R}
R&\in& L^\infty((0,\infty)\times\mathbb{R})
\cap L^\infty(0,\infty;\mathrm{BV}(\mathbb{R}))\; 
\text{ with }|R|\leq1\text{ a.e.\ in }(0,\infty)\times\mathbb{R}\,.\qquad\qquad
\end{eqnarray}
\end{subequations}
such that the following convergence statements hold true:
\begin{subequations}
\label{conv-results}
\begin{eqnarray}
\label{conv-ueps-wstarLinftyL2}
u^\varepsilon&\overset{*}{\rightharpoonup}& u\quad\text{ in }L^\infty(0,\infty;L^2(\mathbb{R}))
\,,\\
\label{conv-epspartialxueps-L2to0}
\varepsilon\|\partial_xu^\varepsilon\|_{L^2((0,T)\times\mathbb{R})}
&\to&0
\,,\\
\label{conv-Reps-wstarLinfty}
R^\varepsilon&\overset{*}{\rightharpoonup}& R\quad\text{ in }L^\infty([0,T]\times B_r)
\;\text{ for all }T>0,\,r>0
\,,\qquad\qquad\\
\label{conv-ueps-L2}
u^\varepsilon&\to& u\quad\text{ in }L^2((0,T)\times B_r)
\;\text{ for all }T>0,r>0
\,,\\
\label{conv-ueps-L2-ptwae}
\|u^\varepsilon(t)\|_{L^2(\mathbb{R})}&\to&\|u(t)\|_{L^2(\mathbb{R})}
\quad\text{ for a.a.\ }t\in\mathbb{R}\,,\\
\label{liminf-L1partialxueps}
\liminf_{\varepsilon\to0}\int_0^T\int_{B_r}|\partial_xu^\varepsilon|\,\mathrm{d}x\,\mathrm{d}t
&\geq&\int_0^T|\mathrm{D}_xu|(B_r)\,\mathrm{d}t\;\text{ for all }T>0,r>0\,,\\
\label{conv-ueps-wstarLinfty-BV}
u^\varepsilon&\overset{*}{\rightharpoonup}& u\quad\text{ in }L^\infty(0,\infty;L^\infty(\R))\text{ and }L^\infty(0,\infty;\mathrm{BV}(\mathbb{R}))\,,\\
\label{conv-Reps-LinftyBV}
(R^\eps+\eps\partial_xu^\eps)&\overset{*}{\rightharpoonup}&R\quad\text{in }L^\infty(0,T;\mathrm{BV}(B_r))\;\text{ for all }T>0,r>0\,.
\end{eqnarray}
\end{subequations}
\item 
Assume for the sequence of initial data that 
\begin{equation}
\label{conv-ini-wL1}
u^\varepsilon_{\mathrm{in}}\rightharpoonup u_{\mathrm{in}}\quad\text{ in }L^1(\mathbb{R})\,.
\end{equation}
Then, the limit pair $(u,R)$ obtained in 1.\ is a 
distributional solution of \eqref{weak-BV-uR}, i.e., it satisfies 
\begin{equation}
\label{weak-ibvp-limit}
\begin{split}
\int_0^\infty\int_{\mathbb{R}}\Big(
u\,\partial_t\phi+\big(
\frac{u^2}{2}-R
\big)\partial_x\phi
\Big)\,\mathrm{d}x\,\mathrm{d}t
+\int_{\mathbb{R}}u_{\mathrm{in}}\phi(0)\,\mathrm{d}x=0
\end{split}
\end{equation}
for all $\phi\in \mathrm{C}^1_\mathrm{c}([0,\infty)\times\mathbb{R})$. 
\item
Assume for the sequence of initial data that there even holds 
\begin{equation}
\label{conv-ini-sL2}
u^\varepsilon_{\mathrm{in}}\to u_{\mathrm{in}}\quad\text{ in }L^2(\mathbb{R})\,.
\end{equation}
Then, the limit pair $(u,R)$ obtained in 1.\ also satisfies the following energy-dissipation estimate for all 
$t\in(0,\infty)$ 
\begin{equation}
\label{en-dissip-limit}
\begin{split}
\frac{1}{2}\|u(t)\|_{L^2(\mathbb{R})}^2
+\int_0^t|\mathrm{D}_x u(\tau)|(\mathbb{R})\,\mathrm{d}\tau
\leq\frac{1}{2}\|u_\mathrm{in}\|_{L^2(\mathbb{R})}^2\,.
\end{split}
\end{equation}
\item 
The limit $u$ satisfies Oleinik's entropy condition 
\begin{equation}
\label{u-Oleinik}
\mathrm{D}_xu(t)<\frac{1}{t}\quad\text{ in the sense of distributions for all $t>0$\,.}
\end{equation}
\item 
The limit pair $(u,R)$ satisfies the compatibility condition
\begin{equation}
\label{compat-uR}
-\mathrm{D}_xR\in\partial\Psi(u)\,,
\end{equation}
where $\partial\Psi(u)$ denotes the subdifferential of the functional $\Psi$ in $u,$ for $\Psi$  given by
\begin{equation}
\label{def-Psi-0}
\Psi: X\to[0,\infty]\,,\quad \Psi(v):=\left\{
\begin{array}{cl}
\int_0^T|\mathrm{D}_xv|(\R)\,\mathrm{d}t&\text{if }v\in Y,\\
\infty&\text{otherwise},
\end{array}\right.
\end{equation}
and with the spaces $X:=L^2((0,T)\times \R)$ and $Y:=X\cap L^1(0,T;\mathrm{BV}(\R))$ in \eqref{def-Psi-0}.
\end{enumerate}
\end{thm}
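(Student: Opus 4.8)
The plan is to build the limit pair $(u,R)$ by extracting weakly convergent subsequences from the uniform estimates of Propositions~\ref{EDB-UniBds} and~\ref{UniBDsImpro} and then to pass to the limit in each term of \eqref{weak-ibvp-approx}, \eqref{en-dissip-eps} and the entropy inequality. For Item~1, the weak-$*$ limits $u^\eps\overset{*}{\rightharpoonup}u$ in $L^\infty(0,\infty;L^2(\R))$ and $R^\eps\overset{*}{\rightharpoonup}R$ on $[0,T]\times B_r$ are produced by Banach--Alaoglu from \eqref{unif-bd-Linfty-L2} and \eqref{unif-bd-Reps-Linfty}, the latter also yielding $|R|\le1$; the bounds \eqref{unif-bd-ueps-Linfty} and \eqref{unif-bd-ueps-BV} give weak-$*$ precompactness in $L^\infty(0,\infty;L^\infty(\R)\cap\mathrm{BV}(\R))$, and \eqref{conv-epspartialxueps-L2to0} follows from \eqref{unif-bd-viscosity} via $\eps\norm{\dx u^\eps}{L^2}\le\sqrt{\eps}\,C$. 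The only genuinely nontrivial compactness is the strong convergence \eqref{conv-ueps-L2}: here I would apply Theorem~\ref{Help-Simon} on each slab $(0,T)\times B_r$ with $A=\mathrm{BV}(B_r)$, $B=L^2(B_r)$, $C=(W^{1,\infty}_0(\R))'$, exploiting the compact embedding $\mathrm{BV}(B_r)\hookrightarrow\hookrightarrow L^2(B_r)$, the uniform $L^\infty(0,\infty;\mathrm{BV})$ bound, and the time-derivative bound \eqref{unif-bd-time-der}; a further subsequence and a tail estimate based on the uniform $L^2$-bound then give the pointwise convergence of norms \eqref{conv-ueps-L2-ptwae}, while \eqref{liminf-L1partialxueps} is the lower semicontinuity of the total variation.

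For Item~2 I would pass to the limit in \eqref{weak-ibvp-approx}: the term $u^\eps\dt\phi$ by weak-$*$ convergence, the stress term $R^\eps\dx\phi$ by \eqref{conv-Reps-wstarLinfty}, the viscous contribution $\eps\dx u^\eps\,\dx\phi$ by \eqref{conv-epspartialxueps-L2to0}, and the initial term by \eqref{conv-ini-wL1} (pairing the weak $L^1$-limit against $\phi(0)\in L^\infty$); the only flux term requiring strong convergence is $(u^\eps)^2/2$, for which \eqref{conv-ueps-L2} gives $(u^\eps)^2\to u^2$ in $L^1_{\mathrm{loc}}$, so that \eqref{weak-ibvp-limit} follows. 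For Item~3 I would take $\liminf_{\eps\to0}$ in \eqref{en-dissip-eps}: the kinetic term by weak lower semicontinuity (or \eqref{conv-ueps-L2-ptwae}), the nonnegative viscous term is discarded, and the dissipation term is bounded below by first restricting the spatial integral to $B_r$, applying the elementary inequality \eqref{unif-bd-BVloc2} in the form $|\dx u^\eps|^2/\sqrt{|\dx u^\eps|^2+\eps^2}\ge|\dx u^\eps|-\eps$, invoking \eqref{liminf-L1partialxueps}, and finally letting $r\to\infty$; the right-hand side converges by the strong convergence \eqref{conv-ini-sL2}, which yields \eqref{en-dissip-limit}. For Item~4 the uniform Oleinik estimate \eqref{unif-bd-ueps-Oleinik} reads, against any nonnegative $\phi\in\mathrm{C}^\infty_\mathrm{c}$, as $\int(\tfrac1t-\dx u^\eps)\phi\ge0$; since $\dx u^\eps=\mathrm{D}_xu^\eps\overset{*}{\rightharpoonup}\mathrm{D}_xu$ as measures by \eqref{conv-ueps-wstarLinfty-BV}, this inequality passes to the limit and gives \eqref{u-Oleinik}.

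The main difficulty is Item~5, the compatibility condition \eqref{compat-uR}, which is understood in the distributional sense consistent with the known characterization of the subdifferential of the total variation functional. The starting point is that at the approximate level $R^\eps=\partial_e\psi_\eps(\dx u^\eps)$ with $\psi_\eps(e)=\sqrt{|e|^2+\eps^2}$, so that $|R|\le1$ in the limit (already from Item~1) is the first of the two ingredients of that characterization; the second is the pairing identity $\langle R,\mathrm{D}_xu\rangle=\Psi(u)=\int_0^T|\mathrm{D}_xu|(\R)\,\mathrm{d}t$, interpreted through an Anzellotti-type pairing of the bounded field $R\in L^\infty(0,T;\mathrm{BV})$ with the measure $\mathrm{D}_xu$. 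Once these hold, the subdifferential inequality $\langle-\mathrm{D}_xR,v-u\rangle\le\Psi(v)-\Psi(u)$ for all $v$ follows by combining the dual bound $\langle R,\mathrm{D}_xv\rangle\le\Psi(v)$ (valid since $|R|\le1$) with the pairing identity at $u$, so that \eqref{compat-uR} is obtained.

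The hard part is establishing the pairing identity. The inequality $\langle R,\mathrm{D}_xu\rangle\le\Psi(u)$ is immediate from $|R|\le1$, but the reverse inequality $\langle R,\mathrm{D}_xu\rangle\ge\Psi(u)$ cannot be read off from weak convergence alone, since it amounts to identifying the limit of the product $\int_0^T\!\int_\R R^\eps\,\dx u^\eps=\int_0^T\!\int_\R|\dx u^\eps|^2/\sqrt{|\dx u^\eps|^2+\eps^2}$ of two merely weakly convergent sequences. I would obtain it from an energy identity for the limit: testing the limit equation \eqref{weak-ibvp-limit} against $u$ itself gives $\tfrac12\norm{u(t)}{L^2(\R)}^2-\tfrac12\norm{u_\mathrm{in}}{L^2(\R)}^2=-\int_0^t\langle R,\mathrm{D}_xu\rangle\,\mathrm{d}s$, the flux contribution $\int u^2\dx u$ vanishing; combined with the energy--dissipation inequality \eqref{en-dissip-limit} this forces $\int_0^t\langle R,\mathrm{D}_xu\rangle\,\mathrm{d}s\ge\int_0^t|\mathrm{D}_xu|(\R)\,\mathrm{d}s$, i.e.\ the missing direction. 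The obstacle is that $u$ is not an admissible test function in \eqref{weak-ibvp-limit}, so this energy identity and the Anzellotti pairing must be justified through the weak formulation in the enlarged space $V=L^\infty(0,T;L^\infty(\R)\cap\mathrm{BV}(\R))$ and the Minty-type argument of the subsequent Theorems~\ref{Weak-V}, \ref{Char-Ri} and~\ref{Char-Rii}, whose admissibility rests on the improved uniform bounds of Proposition~\ref{UniBDsImpro}.
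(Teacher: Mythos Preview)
Your proposal is correct and follows essentially the same route as the paper's proof: the compactness for Items~1--2 via Banach--Alaoglu and the Aubin--Lions result of Theorem~\ref{Help-Simon}, the liminf argument for Item~3 via \eqref{unif-bd-BVloc2} and lower semicontinuity of the total variation, the distributional limit in Oleinik's inequality for Item~4, and---crucially---the correct identification of the obstacle in Item~5 (that $u$ is not admissible in \eqref{weak-ibvp-limit}) together with the deferral to Theorems~\ref{Weak-V}, \ref{Char-Ri}, \ref{Char-Rii}. The only organizational difference is that for the pairing identity in Item~5 you phrase the ``missing direction'' as combining a limit energy \emph{identity} with the already-proved energy--dissipation \emph{inequality} \eqref{en-dissip-limit}, whereas the paper re-runs the $\eps$-level energy computation inside the chain \eqref{id-limit30}; since \eqref{en-dissip-limit} was itself obtained from that computation, the two arguments are equivalent.
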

\begin{proof}
{\bf Proof of the convergence statements \eqref{conv-results}: }Convergence result \eqref{conv-ueps-wstarLinftyL2} is a direct consequence of the uniform bound \eqref{unif-bd-Linfty-L2} and convergence result \eqref{conv-epspartialxueps-L2to0} directly follows from \eqref{unif-bd-viscosity}. 
\par
In order to conclude \eqref{conv-Reps-wstarLinfty} we choose subsequences $(T_n)_n,$ $(r_n)_n$ with $T_n\nearrow\infty$ and $r_n\nearrow\infty$ monotonically increasing as $n\to\infty$. For each $n\in\mathbb{N}$
the uniform bound \eqref{unif-bd-Reps-Linfty} is valid. Thus, for $n=1,$  \eqref{unif-bd-Reps-Linfty} ensures the existence of a subequence 
$(R^{{\varepsilon}_1})_{{\varepsilon}_1}\subset(R^{\varepsilon})_{\varepsilon}$ and of a limit $R_1,$ such that 
\begin{equation*}
R^{{\varepsilon}_1}\overset{*}{\rightharpoonup}R_1\quad\text{ in }L^\infty((0,T_1)\times B_{r_1})
\quad\text{ as }\varepsilon_1\to0\,.
\end{equation*}
Similarly, for $n=2,$ \eqref{unif-bd-Reps-Linfty} ensures the existence of a further subequence $(R^{\varepsilon_2})_{\varepsilon_2}\subset(R^{{\varepsilon}_1})_{{\varepsilon}_1}$ and of a limit $R_2,$ such that 
\begin{equation*}
R^{{\varepsilon}_2}\overset{*}{\rightharpoonup}R_2\quad\text{ in }L^\infty((0,T_2)\times B_{r_2})\quad\text{ as }\varepsilon_2\to0\,.
\end{equation*}
Proceeding in this way we find for each $n\in\mathbb{N}$ a subsequence 
$(R^{{\varepsilon}_n})_{{\varepsilon}_n}\subset (R^{{\varepsilon}_m})_{{\varepsilon}_m}$ for any $m\leq n$ and a limit $R_n$ such that 
\begin{equation}
\label{conv-Reps-wstarLinfty-1}
R^{{\varepsilon}_n}\overset{*}{\rightharpoonup}R_n\quad\text{ in }L^\infty((0,T_n)\times B_{r_n})
\quad\text{ as }\varepsilon_n\to0\,.
\end{equation}
It remains to identify $R_n$ with $R_m$ on $Q_{r_m}^{T_m}:=(0,T_m)\times B_{r_m}$ of any $m\leq n\in\mathbb{N}$. The above convergence \eqref{conv-Reps-wstarLinfty-1} means 
\begin{equation}
\label{conv-Reps-wstarLinfty-2}
\int_{Q_{r_n}^{T_n}}R^{{\varepsilon}_n}\phi\,\mathrm{d}x\,\mathrm{d}t
\to\int_{Q_{r_n}^{T_n}}R_n\phi\,\mathrm{d}x\,\mathrm{d}t\quad\text{ for all }\phi\in 
L^1(Q_{r_n}^{T_n})\;\text{ as }\varepsilon_n\to0\,.
\end{equation}
A function $\tilde\phi\in L^1(Q_{r_m}^{T_m})$ can be extended to an admissible test function $\phi\in L^1(Q_{r_n}^{T_n})$ for \eqref{conv-Reps-wstarLinfty-2} as follows
\begin{equation}
\label{conv-Reps-wstarLinfty-3}
\phi(t,x):=\left\{\begin{array}{cl}
\tilde\phi(t,x)&\text{if }(t,x)\in Q_{r_m}^{T_m},\\[0.5ex]
0&\text{if }(t,x)\in Q_{r_n}^{T_n}\backslash Q_{r_m}^{T_m}.
\end{array}
\right.
\end{equation}
Using such a test function  in \eqref{conv-Reps-wstarLinfty-2} we observe, on the one hand,  for all 
$\phi\in 
L^1(Q_{r_n}^{T_n})$ as in \eqref{conv-Reps-wstarLinfty-3} that 
\begin{equation*}
\int_{Q_{r_n}^{T_n}}R^{{\varepsilon}_n}\phi\,\mathrm{d}x\,\mathrm{d}t
=\int_{Q_{r_m}^{T_m}}R^{{\varepsilon}_n}\tilde\phi\,\mathrm{d}x\,\mathrm{d}t
\to\int_{Q_{r_m}^{T_m}}R_n\tilde\phi\,\mathrm{d}x\,\mathrm{d}t
=\int_{Q_{r_n}^{T_n}}R_n\phi\,\mathrm{d}x\,\mathrm{d}t
\end{equation*}
and, on the other hand, since $(R^{{\varepsilon}_n})_{{\varepsilon}_n}\subset (R^{{\varepsilon}_m})_{{\varepsilon}_m}$ for $m\leq n,$ also that 
\begin{equation*}
\int_{Q_{r_m}^{T_m}}R^{{\varepsilon}_n}\tilde\phi\,\mathrm{d}x\,\mathrm{d}t
\to \int_{Q_{r_m}^{T_m}}R_m\tilde\phi\,\mathrm{d}x\,\mathrm{d}t
\end{equation*}
for all $\tilde\phi\in L^1(Q_{r_m}^{T_m})$. Since $\mathrm{C}_0^\infty(Q_{r_m}^{T_m})$ is dense in $L^1(Q_{r_m}^{T_m})$ we conclude by the fundamental lemma of the calculus of variations that $R_n=R_m$ a.e.\ in $Q_{r_m}^{T_m}$ for any $m\leq n\in\mathbb{N}$ and thus set $R:=R_n$.
\par
In order to deduce convergence statement \eqref{conv-ueps-L2} we will invoke 
the Aubin-Lions-type result given in Theorem \ref{Help-Simon}. For this we set 
$A:=\mathrm{BV}_{\mathrm{loc}}(\mathbb{R}),$ $B:= L^2_{\mathrm{loc}}(\mathbb{R}),$ and 
$C:=(W_0^{1,\infty}(\mathbb{R}))'$. We observe that 
\begin{equation*}
W_0^{1,\infty}(\mathbb{R})\subset W_0^{1,1}(\mathbb{R})
\subset \mathrm{BV}_{\mathrm{loc}}(\mathbb{R})=A
\subset L^2_{\mathrm{loc}}(\mathbb{R})=B
\subset (W_0^{1,\infty}(\mathbb{R}))'=C
\end{equation*}
with 
\begin{equation}
\label{conv-ueps-L2-1}
A=\mathrm{BV}_{\mathrm{loc}}(\mathbb{R})\subset B= L^2_\mathrm{loc}(\mathbb{R})\;\text{ compactly\,,}
\end{equation}
because, for space dimension $d=1$ there holds 
\begin{equation*}
\mathrm{BV}(B_r)\subset L^{\alpha}(B_r)\;\text{ for all }\alpha\leq1^*
\;\text{ with }1^*:=\frac{1d}{1-d}=\infty\,,
\end{equation*}
see, e.g.\ \cite{EvGa92}[p.\ 189, Thm.\ 1]. 
The compactness of the embedding can be concluded from Rellich's embedding theorem using that any $f\in\mathrm{BV}(B_r)$ can be approximated by a sequence $(f_n)_n$ of smooth functions 
such that $\|\partial_xf_n\|_{L^1(B_r)}\to|\mathrm{D}_xf|(B_r)$. 
\par
By \eqref{conv-ueps-L2-1} and the uniform bounds \eqref{unif-bd-Linfty-L2}, \eqref{unif-bd-BVloc}, and \eqref{unif-bd-time-der} we have that 
$(u^\varepsilon)_\varepsilon$ is uniformly bounded in $L^q((0,T);B)\cap L^1_{\mathrm{loc}}((0,T);A)$ with $q=\infty$ and that $(\partial_t u^\varepsilon)_\varepsilon$ is uniformly bounded in $L^1_{\mathrm{loc}}((0,T);C)$. Thus all the assumptions of Thm.\ \ref{Help-Simon} are satisfied and the theorem thus yields strong convergence of a subsequence in $L^p((0,T);B)$ for any $p<q=\infty$.  From this, we conclude in particular the strong $L^2$-convergence, 
i.e., \eqref{conv-ueps-L2}.   
\par
Now, by convergence result \eqref{conv-ueps-L2} and Riesz' convergence theorem, there exists a further subsequence along which pointwise convergence holds true a.e.\ in $(0,\infty),$ i.e., we conclude \eqref{conv-ueps-L2-ptwae}.
\par
As for convergence result \eqref{liminf-L1partialxueps} we argue as follows: 
From convergence result \eqref{conv-ueps-L2} we conclude in particular also strong convergence in $L^1((0,T)\times B_r)$ and hence we have 
\begin{equation*}
f_\varepsilon:=\|u^\varepsilon(\cdot)-u(\cdot)\|_{L^1(B_r)}\to0\quad\text{ in }L^1(0,T)\,.
\end{equation*}
Thus, by Riesz convergence theorem, there exists a subsequence $(f_{\varepsilon_k})_k$ that converges pointwise a.e.\ in $(0,T),$ i.e., we have 
\begin{equation*}
\|u^{\varepsilon_k}(t)-u(t)\|_{L^1(B_r)}\to0\quad\text{ for a.a.\ }t\in(0,T)\,.
\end{equation*}
By the lower semicontinuity of the variation with respect to strong $L^1$-convergence we deduce that 
\begin{equation*}
\liminf_{\varepsilon\to0}\int_{B_r}|\partial_xu^\varepsilon(t)|\,\mathrm{d}x
\geq|\mathrm{D}_xu(t)|(B_r)\;\text{ for a.a.\ }t\in(0,T)\,.
\end{equation*}
By Fatou's lemma we now conclude
\begin{equation*}
\liminf_{\varepsilon\to0}\int_0^T\int_{B_r}|\partial_xu^\varepsilon(t)|\,\mathrm{d}x\,\mathrm{d}t
\geq\int_0^T\liminf_{\varepsilon\to0}\int_{B_r}|\partial_xu^\varepsilon(t)|\,\mathrm{d}x\,\mathrm{d}t
\geq\int_0^T|\mathrm{D}_xu(t)|(B_r)\,\mathrm{d}t\,,
\end{equation*}
which is \eqref{liminf-L1partialxueps}.
\par
Furthermore, convergence statement \eqref{conv-ueps-wstarLinfty-BV} directly follows from the improved uniform bounds \eqref{unif-bd-ueps-Linfty}, 
\eqref{unif-bd-ueps-L1}, and \eqref{unif-bd-ueps-BV} obtained in Proposition \ref{UniBDsImpro}. 
\par
Finally, we verify convergence statement \eqref{conv-Reps-LinftyBV}. For this, we observe that the uniform 
bounds \eqref{unif-bd-Reps-Linfty} and \eqref{unif-bd-ueps-BV} imply for all $r>0$ and all $T>0$ that 
\begin{equation}
\|R^\eps+\eps\partial_x u^\eps\|_{L^\infty(0,T;L^1(B_r))}\leq C
\quad\text{ and }\quad
\|\eps\partial_x u^\eps\|_{L^\infty(0,T;L^1(B_r))}\to0\,.
\end{equation} 
Together with the uniform bound \eqref{unif-bd-visceps-LinftyL1} we have 
\begin{equation}
\|\eps\partial_x u^\eps\|_{L^\infty(0,T;W^{1,1}(B_r))}\leq C\,.
\end{equation}
Thus, there is a subsequence converging in the weak$*$ topology of $L^\infty(0,T;\mathrm{BV}(B_r))$. 
Using that  $\|\eps\partial_x u^\eps\|_{L^\infty(0,T;L^1(B_r))}\to0$ as $\eps\to0,$ we ultimately conclude \eqref{conv-Reps-LinftyBV}. 
\\[1ex]
{\bf Proof of the regularity results \eqref{reg-limit}: }
In a first step we directly conclude from convergence results \eqref{conv-ueps-wstarLinftyL2} and 
\eqref{liminf-L1partialxueps} that $u\in L^1_\mathrm{loc}((0,\infty);\mathrm{BV}_\mathrm{loc}(\mathbb{R}))\cap L^\infty((0,\infty);L^2(\mathbb{R}))$. Moreover, in view of the bound \eqref{unif-bd-BVloc}, by mimicking the subsequence argument of the proof for \eqref{conv-Reps-wstarLinfty} with sequences $(T_n)_n,$ $(r_n)_n$ with $T_n\nearrow\infty,$ $r_n\nearrow\infty,$  
we can identify a limit measure $\mathrm{D}_xu$ on all of $(0,\infty)\times\mathbb{R}$ and obtain that 
\begin{equation*}
\begin{split}
\int_0^\infty|\mathrm{D}_xu(t)|(\mathbb{R})\,\mathrm{d}t
&=\lim_{n\to\infty}\int_0^{T_n}|\mathrm{D}_xu(t)|(B_{r_n})\,\mathrm{d}t\\
&\leq 
\lim_{n\to\infty}\liminf_{\varepsilon\to0}\int_0^{T_n}\int_{B_{r_n}}|\partial_x u^\varepsilon|\,\mathrm{d}x\,\mathrm{d}t\,,
\end{split}
\end{equation*}
where the first equality follows from the $\sigma$-finiteness of the measure $\int_0^{[\cdot]}|\mathrm{D}_xu(t)|(\cdot)\,\mathrm{d}t$ and the second inequality is due to the lower semicontinuity of the total variation. Now, for every $n\in\mathbb{N}$ there is 
an index $\varepsilon_{*n}>0$ such that for all $\varepsilon<\varepsilon_{*n}$ we have  $T_n r_n\leq1/(2\varepsilon)$ and hence, by the bound \eqref{unif-bd-BVloc} we conclude that 
\begin{equation*}
\int_0^\infty|\mathrm{D}_xu(t)|(\mathbb{R})\,\mathrm{d}t\leq c+1\,,
\end{equation*}
which shows that indeed $u\in L^1_\mathrm{loc}((0,\infty);\mathrm{TV}(\mathbb{R}))$. 
Alltogether, with the first result we conclude \eqref{reg-u}.   
\par
In order to verify regularity result \eqref{reg-u1} we invoke convergence result \eqref{conv-ueps-wstarLinfty-BV} and 
conclude that $u\in L^\infty(0,\infty;L^\infty(\R)\cap\mathrm{BV}(\mathbb{R}))$. To deduce that also 
$u\in\mathrm{C}([0,\infty);L^1(\R))$  holds true we argue by Arz\'ela-Ascoli's theorem. For this we consider the sequence of functions 
\begin{equation*}
f^r_\eps:=\int_{B_r}|u^\eps(\cdot,x)|\,\mathrm{d}x:\;[0,\infty)\to[0,\infty)
\end{equation*} 
with $B_r=(-r,r)$ and for all $r>0$ we observe that 
\begin{enumerate}
\item 
for all $\eps>0$ it is $f^r_\eps\in\mathrm{C}([0,\infty))$ by regularity property \eqref{reg-ueps}; 
\item 
the sequence $(f^r_\eps)_\eps$ is uniformly bounded pointwise in $[0,\infty)$ by the uniform bound \eqref{unif-bd-ueps-Linfty};
\item 
the sequence $(f^r_\eps)_\eps$ is uniformly equicontinuous, which can be seen as follows:
\end{enumerate}
Fix $\kappa>0$. We have to show that there is a $\delta>0$ such that for all $\eps>0$ and for all $s,t$ such that $|s-t|<\delta$ there holds 
$|f^r_\eps(s)-f^r_\eps(t)|<\kappa$. Indeed, this holds true, because
\begin{equation}
\label{reg-u1C}
\begin{split}
|f^r_\eps(s)-f^r_\eps(t)|
\leq\int_{\R}|u^\eps(s,x)-u^\eps(t,x)|\,\mathrm{d}x
&\leq \int_{\R}|\partial_tu^\eps(\xi_{st},x)||s-t|\,\mathrm{d}x\\
&\leq \|\partial_tu^\eps\|_{L^\infty(0,\infty;L^1(\R))}|s-t|\leq C|s-t|\,.
\end{split}
\end{equation}
where the last estimate is due to the uniform bound \eqref{unif-bd-partialt-ueps-LinftyL1}. 
Moreover, $\xi_{st}\in[s,t]$ in the second estimate 
denotes a suitable intermediate value obtained by the mean value theorem.  
Restricting the sequence $(f^r_\eps)_\eps$ to a compact interval $[0,T_n]$ for $T_n>0,$ we conclude by  Arz\'ela-Ascoli's theorem that a subsequence converges uniformly on $[0,T_n]$ to a limit function $f^r_n\in\mathrm{C}([0,T_n])$. 
On the other hand, we have by convergence property \eqref{conv-ueps-L2} that $f^r_\eps\to\|u(\cdot)\|_{L^1(B_r)}$  in $L^1(0,T_n)$. By the uniqueness of the limit we therefore conclude that $u\in\mathrm{C}([0,T_n];L^1(B_r))$. Since estimate \eqref{reg-u1C} is independent of 
$r>0$ and $T_n>0$ we can consider a sequence of radii $r\to\infty$ and a sequence $T_n\to\infty$ as $n\to\infty$ 
to conclude that $u\in\mathrm{C}([0,\infty);L^1(\R))$. 
\par 
By similar arguments one can also show that $u\in\mathrm{C}([0,\infty);L^2(\R))$. For this we introduce the sequence of functions
\begin{equation}
g_\eps:=\int_{B_r}|u^\eps(\cdot,x)|^2\,\mathrm{d}x:\;[0,\infty)\to[0,\infty)
\end{equation}
and again check the preconditions of Arz\'ela-Ascoli's theorem:
\begin{enumerate}
\item for all $\eps>0$ it is $g^r_\eps\in\mathrm{C}([0,\infty))$ by regularity property \eqref{reg-ueps}; 
\item 
the sequence $(g_\eps)_\eps$ is uniformly bounded pointwise in $[0,\infty)$ by the uniform bound  
\eqref{unif-bd-Linfty-L2};
\item 
the sequence $(g_\eps)_\eps$ is uniformly equicontinuous, which can be seen with the aid of \eqref{reg-u1C} as follows:
\end{enumerate}
\begin{equation}
\begin{split}
\big|\|u^\eps(s)\|_{L^2(\R)}-\|u^\eps(t)\|_{L^2(\R)}\big|
&\leq \|u^\eps(s)-u^\eps(t)\|_{L^2(\R)}\\
&\leq\|u^\eps(s)-u^\eps(t)\|_{L^\infty(\R)}\,\|u^\eps(s)-u^\eps(t)\|_{L^1(\R)}\\
&\leq2\|u^\eps\|_{L^\infty(0,\infty;L^\infty(\R))}\int_{\R}|u^\eps(s,x)-u^\eps(t,x)|\,\mathrm{d}x\\
&\leq C|s-t|\,,
\end{split}
\end{equation}
where we used \eqref{reg-u1C} and the uniform bound \eqref{unif-bd-ueps-Linfty} to conclude the last estimate. Repeating the arguments from above with the sequence $(g_\eps)_\eps$ restricted to a sequence of compact intervals $[0,T_n]$ one finds uniform convergence with Arz\'ela-Ascoli's theorem and can ultimately conclude that $u\in\mathrm{C}([0,\infty);L^2(\R))$. 
\par
Next, we verify the regularity results for $R$ stated in \eqref{reg-R}. The first regularity result in \eqref{reg-R} is concluded from convergence result  \eqref{conv-Reps-wstarLinfty}, also using that 
$|R^\varepsilon|\leq 1$ in $(0,\infty)\times\mathbb{R}$ for all $\varepsilon>0$. Indeed, assume that there exists a set 
$M_1\subset(0,\infty)\times\mathbb{R}$ of positive Lebesgue measure $\mathcal{L}^2(M_0)>0,$ such that $R>1$ on $M_1$. Then, convergence result \eqref{conv-ueps-wstarLinftyL2} leads to a contradiction when testing with the indicator function $\chi_{M_1}$ of the set $M_1,$ i.e., 
\begin{equation*}
0>\int_0^\infty\int_\mathbb{R}(R^\varepsilon-1)\chi_{M_1}\,\mathrm{d}x\,\mathrm{d}t
\to\int_0^\infty\int_\mathbb{R}(R-1)\chi_{M_1}\,\mathrm{d}x\,\mathrm{d}t
=\int_{M_1}(R-1)\,\mathrm{d}x\,\mathrm{d}t>0\,.
\end{equation*} 
In a similar manner it can be verified that the limit also satisfies $R\geq-1$. 
\par
Finally, the second regularity result in \eqref{reg-R}, i.e., that $R\in L^\infty(0,\infty;\mathrm{BV}(\R)),$ is obtained by invoking convergence result \eqref{conv-Reps-LinftyBV} on a sequence of growing domains $(0,T_n)\times B_{r_n}\to(0,\infty)\times\R$ as $n\to\infty$ by repeating the previous arguments. 
\\[1ex]
{\bf Proof of 2.\ The weak formulation \eqref{weak-ibvp-limit} for the limit pair $(u,R)$: }Next, we verify that the limit pair is a 
weak solution in the sense of \eqref{weak-ibvp-limit}, given that \eqref{conv-ini-wL1} holds true for the initial data. For this we aim to pass to the limit in the weak formulation   \eqref{weak-ibvp-approx}. 
Indeed, as $\varepsilon\to0$ we arrive at \eqref{weak-ibvp-limit} by making use of the convergence results \eqref{conv-ueps-wstarLinftyL2}, \eqref{conv-epspartialxueps-L2to0}, \eqref{conv-Reps-wstarLinfty}, and \eqref{conv-ueps-L2}, and also thanks to assumption  \eqref{conv-ini-wL1}. 
\\[1ex]
{\bf Proof of 3.\ Energy-dissipation estimate \eqref{en-dissip-limit}: } We obtain \eqref{en-dissip-limit} by passing to the limit $\varepsilon\to0$ in the energy-dissipation balance \eqref{en-dissip-eps}, on the left-hand side   
by means of the weak lower semicontinuity of the total variation-term using convergence result \eqref{liminf-L1partialxueps} and by the strong $L^2$-convergence \eqref{conv-ueps-L2-ptwae} pointwise a.e. in time, and on the right-hand side by exploiting assumption \eqref{conv-ini-sL2}. Using that $u\in\mathrm{C}([0,\infty);L^2(\R))$ allows us to infer that \eqref{en-dissip-limit} holds true even for all $t\in[0,T]$. 
Indeed, assume that there is $t_*>0$ such that \eqref{en-dissip-limit} is violated. Observe that the terms on the left-hand side of \eqref{en-dissip-limit} are continuous with respect to $t\in[0,\infty)$ thanks to $u\in\mathrm{C}([0,\infty);L^2(\R))$ and the absolute continuity of the integral term. Therefore, by continuity, there is a whole neighbourhood $B_\delta(t_*)$ such that \eqref{en-dissip-limit} is violated 
for all $t\in B_\delta(t_*)$.  However, we have already deduced that \eqref{en-dissip-limit} holds true for a.a.\ $t>0,$ thus, in 
particular for a.a.\  $t\in B_\delta(t_*),$ which states the contradiction. This concludes the proof of \eqref{en-dissip-limit}. 
\\[1ex]
{\bf Proof of 4.\ Oleinik's entropy condition \eqref{u-Oleinik}: } 
The proof of \eqref{u-Oleinik} is given in Section \ref{Sec:Limit-Oleinik}.
\\[1ex]
{\bf Proof of 5.\ Compatibility condition \eqref{compat-uR}: }
For 
this proof we refer to Section \ref{Sec:CharR}. 
\end{proof}
%
\subsection{Limit passage in Oleinik's entropy condition \eqref{unif-bd-ueps-Oleinik}} 
\label{Sec:Limit-Oleinik}
%
Due to the regularity of the limit $u\in L^1(0,T;\mathrm{BV}(\R))\cap L^\infty(0,T;L^2(\R))$ and by the 
convergence result \eqref{conv-ueps-L2},  
we are able to pass to the limit in Oleinik's entropy condition \eqref{unif-bd-ueps-Oleinik} in distributional sense, only, cf.\ \eqref{limit-Oleinik1}. 
However, in order to characterize the jump behavior of the limit solution along jump curves in a better way, 
we also derive from \eqref{unif-bd-ueps-Oleinik} a relation for the difference quotients, 
which can then be passed to the limit thanks to convergence property \eqref{conv-ueps-L2} and thus results in statement \eqref{limit-Oleinik2}. 
\begin{proposition}[Oleinik's entropy condition for a solution $u$ of the limit system]
\label{Limit-Oleinik}
Let the assumptions of Theorem \ref{Ex-BV-Sol-Limit} be satisfied. The limit solution $u$ obtained in Theorem \ref{Ex-BV-Sol-Limit} satisfies Oleinik's entropy condition in the sense of distributions, i.e., 
\begin{subequations}
\begin{equation}
\label{limit-Oleinik1}
-\int_{\R}u(t)\partial_x\phi\,\mathrm{d}x<\int_{\R}\frac{\phi}{t}\,\mathrm{d}x\qquad\text{for all }\phi\in \mathrm{C}_0^\infty(\R)\;\text{ and all }t>0\,.
\end{equation}
Moreover, there also holds
\begin{equation}
\label{limit-Oleinik2}
u(t,x+h)-u(t,x-h)<\frac{2h}{t}+4Ch\quad\text{for a.a.\ }x\in\R\,,\;\text{for a.a.\ }h>0\,,\;\text{ and for all }t>0\,,
\end{equation}
\end{subequations}
where $C>0$ stems from the uniform bound \eqref{unif-bd-ueps-BV}. 
\end{proposition}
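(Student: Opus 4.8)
The plan is to obtain both assertions by integrating the \emph{uniform} pointwise Oleinik estimate \eqref{unif-bd-ueps-Oleinik}, i.e.\ $\dx u^\eps(t,x)<1/t$ for all $t>0$, $x\in\R$ and all $\eps>0$, against suitable nonnegative weights and then to pass to the limit $\eps\to0$ using the strong convergence \eqref{conv-ueps-L2} of $(u^\eps)_\eps$ in $L^2((0,T)\times B_r)$. The only real difficulty is that \eqref{conv-ueps-L2} is a space--time statement, whereas \eqref{limit-Oleinik1} and \eqref{limit-Oleinik2} are pointwise in $t$ (and, for the latter, pointwise in $x$); bridging this gap is where the time-continuity $u\in\mathrm{C}([0,\infty);L^2(\R))$ from \eqref{reg-u1} and the uniform $\mathrm{BV}$-bound \eqref{unif-bd-ueps-BV} will be used.

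For the distributional form \eqref{limit-Oleinik1} I would fix $t>0$ and a nonnegative $\phi\in\mathrm{C}_0^\infty(\R)$, multiply $\dx u^\eps(t,\cdot)<1/t$ by $\phi$, integrate over $\R$, and integrate by parts to get $-\int_\R u^\eps(t)\dx\phi\,\mathrm{d}x<\int_\R \phi/t\,\mathrm{d}x$. To take $\eps\to0$ without losing the pointwise character in $t$, I would additionally test against a nonnegative $\eta\in\mathrm{C}_0^\infty((0,\infty))$, integrate in time, and pass to the limit via \eqref{conv-ueps-L2} (note $\dx\phi\cdot\eta\in L^2$ has compact support); since $\eta$ is arbitrary this yields the inequality for a.a.\ $t$, and the continuity of $t\mapsto-\int_\R u(t)\dx\phi\,\mathrm{d}x$ guaranteed by $u\in\mathrm{C}([0,\infty);L^2(\R))$, together with the continuity of $t\mapsto\int_\R\phi/t\,\mathrm{d}x$, then extends it to all $t>0$. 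I expect the limit to produce the non-strict comparison, which is exactly the content of the strict symbol read in the sense of distributions, as in \eqref{cdt:oleinik}.

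For the difference-quotient form \eqref{limit-Oleinik2} I would integrate $\dx u^\eps(t,\cdot)<1/t$ over the interval $(x-h,x+h)$, giving the clean pointwise bound $u^\eps(t,x+h)-u^\eps(t,x-h)<2h/t$ for every $x\in\R$ and $h>0$, which is legitimate since $u^\eps(t,\cdot)\in H^2(\R)$ is absolutely continuous. Passing to the limit now requires turning the strong $L^2$-convergence into convergence of the two \emph{pointwise} values $u^\eps(t,x\pm h)$: by Riesz' theorem a subsequence converges for a.a.\ $(t,x)$, and Fubini then gives, for a.a.\ $t$ and a.a.\ $(x,h)$, that both $x+h$ and $x-h$ are points of convergence, whence $u(t,x+h)-u(t,x-h)\le 2h/t$. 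The remaining task is to upgrade ``a.a.\ $t$'' to ``all $t$'' while retaining a pointwise-in-$x$ statement; here I would compare with nearby good times and control the error by the $\mathrm{BV}$-translation estimate $\norm{u^\eps(t,\cdot+h)-u^\eps(t,\cdot)}{L^1(\R)}\le h\norm{\dx u^\eps(t)}{L^1(\R)}\le hC$ stemming from \eqref{unif-bd-ueps-BV}. It is precisely this total-variation bound that enters, the contributions of the two endpoints accounting for the slack $4Ch$ in \eqref{limit-Oleinik2}.

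The main obstacle is thus not the algebra but the passage from the space--time convergence \eqref{conv-ueps-L2} to statements that hold pointwise in $t$ and $x$. For \eqref{limit-Oleinik1} this is resolved softly, by testing in time and invoking $u\in\mathrm{C}([0,\infty);L^2(\R))$; for \eqref{limit-Oleinik2} it is genuinely quantitative, requiring the uniform $\mathrm{BV}$-bound $C$ to transfer the approximate pointwise inequality to the limit and forcing the harmless slack $4Ch$. The strict inequality in \eqref{limit-Oleinik1}, which cannot survive the limit in general since rarefaction-type profiles saturate $\dx u=1/t$, is to be understood in the distributional sense fixed by \eqref{cdt:oleinik}.
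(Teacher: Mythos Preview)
Your treatment of \eqref{limit-Oleinik1} is essentially the paper's argument: test the pointwise Oleinik inequality against a nonnegative $\phi\in\mathrm{C}_0^\infty(\R)$, integrate by parts, pass to the limit to get the inequality for a.a.\ $t$, and then use time-continuity of $u$ to reach all $t>0$. Your device of inserting a time weight $\eta\in\mathrm{C}_0^\infty((0,\infty))$ is a harmless variant of what the paper does implicitly, namely extracting from \eqref{conv-ueps-L2} a subsequence with $u^\eps(t,\cdot)\to u(t,\cdot)$ in $L^2_{\mathrm{loc}}(\R)$ for a.a.\ $t$.

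For \eqref{limit-Oleinik2} your route is genuinely different from the paper's. You integrate $\dx u^\eps(t,\cdot)<1/t$ over $(x-h,x+h)$ to get the clean bound $u^\eps(t,x+h)-u^\eps(t,x-h)<2h/t$ and then pass to the limit via pointwise a.e.\ convergence; this is correct and in fact yields the sharper inequality $\le 2h/t$ for a.a.\ $t$, a.a.\ $x$, a.a.\ $h$, \emph{without} the $4Ch$. The paper instead writes the difference quotient as $2\dx u^\eps(t,x)$ plus an error, bounds the error pointwise via the mean value theorem by $2|\dx u^\eps(t,x)|+|\dx u^\eps(t,x{+}\tilde h_+)|+|\dx u^\eps(t,x{-}\tilde h_-)|$, tests against $\phi$ with $\|\phi\|_\infty=1$, and controls the error by $4\|\dx u^\eps(t,\cdot)\|_{L^1(\R)}\le 4C$ from \eqref{unif-bd-ueps-BV}. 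So in the paper the slack $4Ch$ enters already at the $\eps$-level through this mean-value comparison, \emph{not} in the step ``a.a.\ $t\Rightarrow$ all $t$'' as you suggest. Your proposed mechanism for the $4Ch$---a spatial $\mathrm{BV}$-translation estimate to bridge different times---is misplaced: the extension to all $t$ can be done purely from $u\in\mathrm{C}([0,\infty);L^1(\R))$ by choosing good times $t_n\to t_0$ and passing to an a.e.-in-$x$ convergent subsequence, with no extra loss. Your approach is thus simpler and yields a slightly stronger conclusion; the paper's detour through $2\dx u^\eps$ is what produces, and is the actual source of, the $4Ch$ term in the stated inequality.
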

\begin{proof}
{\bf Proof of \eqref{limit-Oleinik1}: }
Let $t>0$ be arbitrary but fixed. Testing \eqref{unif-bd-ueps-Oleinik} by $\phi\in \mathrm{C}_0^\infty(\R)$ and integrating by parts in space on the left-hand side of the inequality 
results in 
\begin{equation*}
\int_{\R}\phi\,\partial_xu^\eps(t)\,\mathrm{d}x=-\int_{\R}u^\eps(t)\partial_x\phi\,\mathrm{d}x<\int_{\R}\frac{\phi}{t}\,\mathrm{d}x\,.
\end{equation*}
Thanks to convergence result \eqref{conv-ueps-L2} we can pass to the limit $\eps\to0$ in the above equation and first obtain \eqref{limit-Oleinik1} for a.e.\ $t>0$. Using that the limit $u$ satisfies $u\in\mathrm{C}([0,\infty);L^1(\R))$ and that also the term on the right-hand side is continuous in $t>0$ allows us conclude that \eqref{limit-Oleinik1} holds true for all $t>0$.
\\[1ex]
{\bf Proof of \eqref{limit-Oleinik2}: }
For all $\eps>0$ and all $t>0$ we have $u^\eps(t,\cdot)\in \mathrm{C}^1(\R)$ by regularity property \eqref{reg-ueps}. Let now 
$h>0$ 
and consider an arbitrary 
\begin{equation}
\label{limit-Oleinik21}
\phi\in \mathrm{C}_0^\infty(\R)\;\text{ 
with $\|\phi\|_\infty=1$}\,.
\end{equation}
Then we deduce the following estimate from Oleinik's entropy condition  \eqref{unif-bd-ueps-Oleinik}
\begin{equation}
\label{limit-Oleinik22}
\begin{split}
&\int_{\R}\frac{u^\eps(t,x+h)-u^\eps(t,x-h)}{h}\phi\,\mathrm{d}x\\
&<\int_{\R}\Big(\frac{2}{t}+\Big|2\partial_xu^\eps(t,x)-\frac{u^\eps(t,x+h)-u^\eps(t,x-h)}{h}\Big|\Big)\phi\,\mathrm{d}x\\
&\leq\int_{\R}\Big(\frac{2}{t}+2|\partial_xu^\eps(t,x)|+|\partial_xu^\eps(t,x+\tilde h_+)|+|\partial_xu^\eps(t,x-\tilde h_-)|\Big)\phi\,\mathrm{d}x\\
&\leq\int_{\R}\frac{2\phi}{t}\,\mathrm{d}x+4\|\partial_xu^\eps(t,\cdot)\|_{L^1(\R)}\|\phi\|_{\infty}
\leq\int_{\R}\frac{2\phi}{t}\,\mathrm{d}x+4C\,.
\end{split}
\end{equation}
Here we used the mean value theorem of differentiability with suitable intermediate values $\tilde h_\pm\in (0,h)$ in the second estimate, and the uniform bound \eqref{unif-bd-ueps-BV} to arrive at the last estimate.   Making use of the strong $L^2$-convergence 
\eqref{conv-ueps-L2} for a.a.\ $t>0,$ 
we can pass to the limit $\eps\to0$ in \eqref{limit-Oleinik22} and thus obtain 
\begin{equation}
\label{limit-Oleinik23}
\begin{split}
\int_{\R}\frac{u(t,x+h)-u(t,x-h)}{h}\phi\,\mathrm{d}x
<\int_{\R}\frac{2\phi}{t}\,\mathrm{d}x+4C\quad\text{ for all }\phi\in\mathrm{C}_0^\infty(\R) \text{ with }\eqref{limit-Oleinik21}\,,
\end{split}
\end{equation}
at first, for a.e.\ $t\in(0,\infty)$. Arguing again that $u\in\mathrm{C}([0,\infty);L^1(\R))$ like above for \eqref{limit-Oleinik1}, we find that \eqref{limit-Oleinik23} holds true for all $t>0$.  
By the regularity property \eqref{reg-u1} for $u$ we ultimately conclude \eqref{limit-Oleinik2} from \eqref{limit-Oleinik23}. 
\end{proof}
%
\section{Characterization of the limit $R$}
\label{Sec:CharR}
%
In the following we want to characterize the limit $R$ obtained in Thm.\ \ref{Ex-BV-Sol-Limit}, 1.\  by relating it to an element of the subdifferential of the total variation functional $\Psi$ defined on a suitable Bochner space. More precisely, for the limit pair $(u,R)$ our goal is to show that the negative of the distributional derivative of $R$ is an element of the total variation functional evaluated in $u,$ i.e.
\begin{equation}
\label{char-R0}
\zeta\in\partial\Psi(u)\quad\text{ and }\quad\zeta=-\mathrm{D}_xR\;\text{ in the sense of distributions\,.}
\end{equation}
 To obtain this relation we first extend in Section \ref{Sec:Weak-V} the weak formulation in the sense of $\mathrm{BV}$-solutions 
\eqref{weak-ibvp-limit} to a weak formulation in the dual of the space  
$V:=\big(L^\infty(0,T;L^\infty(\R)\cap\mathrm{BV}(\mathbb{R}))\big)$. This will allow us to use the solution $u$ as a test function in this weak formulation. Based on this, we subsequently deduce the characterization \eqref{char-R0} of $R$ in Section \ref{Sec:CharR-Detail}.
%
\subsection{Weak formulation in $V^*:=\big(L^\infty(0,T;L^\infty(\R)\cap\mathrm{BV}(\mathbb{R}))\big)^*$}
\label{Sec:Weak-V}
%
Recall that estimates \eqref{unif-bd-ueps-Linfty}, \eqref{unif-bd-ueps-L1}, \eqref{unif-bd-ueps-BV}, \eqref{unif-bd-partialt-ueps-LinftyL1}, 
\eqref{unif-bd-drifteps-LinftyL1}, and \eqref{unif-bd-visceps-LinftyL1} provide the uniform bounds 
\begin{subequations}
\label{uni-bd-12}
\begin{align}
\label{uni-bd-1}
\|u^\eps\|_{L^\infty(L^\infty)}+\|u^\eps\|_{L^\infty(L^1)}+\|\partial_xu^\eps\|_{L^\infty(L^1)}&\leq C\,,\\
\label{uni-bd-2}
\|\partial_tu^\eps\|_{L^\infty(L^1)}+\|\partial_x(u^\eps)^2/2\|_{L^\infty(L^1)}
+\|\partial_x R^\eps+\eps\partial_x^2u^\eps\|_{L^\infty(L^1)}&\leq C\,.
\end{align}
\end{subequations}
By \eqref{uni-bd-1} the sequence $(u^\eps)_\eps$ is uniformly bounded in 
\begin{equation*}
V:=L^\infty(0,T;L^\infty(\R)\cap\mathrm{BV}(\mathbb{R}))\subset L^\infty(0,T;L^\infty(\R))\,.
\end{equation*}  
We want to argue now that the plastic Burgers equation \eqref{burgers-plastic} even holds true in 
\begin{equation*}
V^*=\big(L^\infty(0,T;L^\infty(\R)\cap\mathrm{BV}(\mathbb{R}))\big)^*\,.
\end{equation*}
For this, let $\xi_\eps\in\{\partial_tu^\eps,\partial_x(u^\eps)^2/2,\partial_xR^\eps+\eps\partial_xu^\eps\}$ 
and observe that, by Hahn-Banach's extension theorem and \eqref{uni-bd-2} it is 
\begin{equation*}
\underset{\|v\|_V=1}{\sup_{v\in V}}\langle\xi_\eps,v\rangle_V
=\underset{\|v\|_V=1}{\sup_{v\in V}}\langle\xi_\eps,v\rangle_{L^\infty(0,T;L^\infty(\R))}
\leq\underset{\|v\|_V=1}{\sup_{v\in V}}\|\xi_\eps\|_{L^\infty(L^1)}\|v\|_{L^\infty(L^\infty)}\leq C\,,
\end{equation*}
which gives the following uniform bound in $V^*$
\begin{equation}
\label{uni-bd+}
\|\partial_tu^\eps\|_{V^*}+\|\partial_x(u^\eps)^2/2\|_{V^*}
+\|\partial_x R^\eps+\eps\partial_x^2u^\eps\|_{V^*}\leq C\,.
\end{equation}
Thus, by Banach-Alaoglu's theorem there exist elements $\alpha,\beta,\gamma\in V^*$ such that
\begin{subequations}
\label{conv-abc}
\begin{align}
\label{conv-abc-a}
\partial_tu_\eps&\overset{*}{\rightharpoonup}\alpha\quad\text{ in }V^*\,,\\
\label{conv-abc-b}
\partial_x(u_\eps)^2/2&\overset{*}{\rightharpoonup}\beta\quad\text{ in }V^*\,,\\
\label{conv-abc-c}
\partial_x R_\eps+\eps\partial_x^2u^\eps&\overset{*}{\rightharpoonup}\gamma\quad\text{ in }V^*\,
\end{align}
\end{subequations}
along a (not-relabeled) subsequence. 
Therefore we can state the following theorem, which also provides a better identification of the above three limits:  
\begin{thm}
\label{Weak-V}
Let $(u^\eps)_\eps\subset V$ be the sequence of approximating solutions obtained in Theorem \eqref{Thm-Ex-Approx-Sol}, $(u,R)$ be the limit pair identified in Theorem 
\ref{Ex-BV-Sol-Limit}, and $\alpha,\beta,\gamma\in V^*$ such that also convergence statements \eqref{conv-abc} hold true. 
Then the weak formulation of the Cauchy problem \eqref{CauchyP-burgers-plastic} of the plastic Burgers equation holds true in $V^*$ as follows
\begin{equation}
\label{weak-V}
\langle\alpha,v\rangle_V+\langle\beta,v\rangle_V+\langle\gamma,v\rangle_V=0\quad\text{ for all }v\in V\,,
\end{equation}
where 
\begin{align}
\label{weak-V-id}
\alpha=\mathrm{D}_tu\,,\;\;
\beta=\mathrm{D}_xu^2/2=u\mathrm{D}_xu\,,\;\;\text{and }
\gamma=-\mathrm{D}_xR
\quad\text{ in the sense of distributions}\,,
\end{align}
so that \eqref{weak-V} is given by \eqref{weak-ibvp-limit} 
for all $\phi\in \mathrm{C}^1_{\mathrm{c}}((0,T)\times\R)$. In addition to regularity properties \eqref{reg-limit}, the function $u$ also satisfies
\begin{equation}
\label{reg-u+}
u\in W^{1,\infty}(0,T;(L^\infty(\R)\cap\mathrm{BV}(\mathbb{R}))^*)\,.
\end{equation}
Moreover, for $v=u$ in \eqref{weak-V} the following identification holds true:
\begin{subequations}
\label{weak-V-id-u}
\begin{align}
\label{weak-V-id-ua}
\langle\alpha,u\rangle_V&=\frac{1}{2}\|u(T)\|_{L^2(\R)}^2-\frac{1}{2}\|u_{\mathrm{in}}\|_{L^2(\R)}^2\,,\\
\label{weak-V-id-ub}
\langle\beta,u\rangle_V&=0\,.
\end{align}
\end{subequations}
\end{thm}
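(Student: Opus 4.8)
The plan is to prove the four groups of assertions in the order in which they build on one another: first the weak formulation \eqref{weak-V} in $V^*$, then the distributional identification \eqref{weak-V-id}, then the regularity \eqref{reg-u+}, and finally the two pairings \eqref{weak-V-id-u} for the choice $v=u$, which I expect to be the delicate part. For \eqref{weak-V} I would simply pass to the weak-$*$ limit: for every $\eps>0$ the approximate equation \eqref{eq:b-p-app} holds, so testing it against an arbitrary $v\in V$ gives $\langle\dt u^\eps,v\rangle_V+\langle\dx(u^\eps)^2/2,v\rangle_V-\langle\dx R^\eps+\eps\partial_x^2u^\eps,v\rangle_V=0$, and the three convergences \eqref{conv-abc} yield \eqref{weak-V} as $\eps\to0$. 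To obtain \eqref{weak-V-id} I would restrict this pairing to $\phi\in\mathrm{C}^1_\mathrm{c}((0,T)\times\R)$, which lie in $V$; there the $V^*$-pairing reduces to an ordinary integral, so $\langle\alpha,\phi\rangle_V=\lim_\eps(-\int_0^T\!\int_\R u^\eps\dt\phi)=-\int_0^T\!\int_\R u\,\dt\phi$ by the strong convergence \eqref{conv-ueps-L2}, which identifies $\alpha=\mathrm{D}_tu$. Likewise $(u^\eps)^2\to u^2$ in $L^1_\mathrm{loc}$ (again \eqref{conv-ueps-L2}) gives $\beta=\mathrm{D}_x(u^2/2)=u\,\mathrm{D}_xu$, and $R^\eps+\eps\dx u^\eps\overset{*}{\rightharpoonup}R$ from \eqref{conv-Reps-LinftyBV} identifies $\gamma$ with $-\mathrm{D}_xR$ (consistently with the sign convention in \eqref{conv-abc-c}). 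The fundamental lemma of the calculus of variations then pins these down as distributions, so \eqref{weak-V} reduces to \eqref{weak-ibvp-limit} on smooth test functions.

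For the regularity \eqref{reg-u+} the key input is the uniform bound \eqref{unif-bd-partialt-ueps-LinftyL1}. Since $\mathrm{BV}(\R)\cap L^\infty(\R)\hookrightarrow L^\infty(\R)$, dualising gives the continuous embedding $L^1(\R)\hookrightarrow(L^\infty(\R)\cap\mathrm{BV}(\R))^*$; hence $(\dt u^\eps)_\eps$ is bounded in $L^\infty(0,T;(L^\infty(\R)\cap\mathrm{BV}(\R))^*)$ and any weak-$*$ limit must coincide with $\alpha=\mathrm{D}_tu$. Together with $u\in L^\infty(0,T;L^1(\R))\hookrightarrow L^\infty(0,T;(L^\infty(\R)\cap\mathrm{BV}(\R))^*)$, which follows from \eqref{unif-bd-ueps-L1}, this gives \eqref{reg-u+}.

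The hardest part is \eqref{weak-V-id-u}. For both pairings I would return to the smooth level, where exact identities hold. Testing \eqref{eq:b-p-app} by $u^\eps$ and using the far-field relations \eqref{int-far-field} yields $\langle\dt u^\eps,u^\eps\rangle_V=\tfrac12\norm{u^\eps(T)}{L^2(\R)}^2-\tfrac12\norm{u^\eps_\mathrm{in}}{L^2(\R)}^2$, while the perfect-derivative computation $\dx(u^\eps)^2/2\cdot u^\eps=\dx((u^\eps)^3/3)$ together with \eqref{int-far-field1} gives $\langle\dx(u^\eps)^2/2,u^\eps\rangle_V=0$. The right-hand side of the first identity passes to the limit easily: $\norm{u^\eps_\mathrm{in}}{L^2(\R)}\to\norm{u_\mathrm{in}}{L^2(\R)}$ by \eqref{conv-ini-sL2}, and $\norm{u^\eps(T)}{L^2(\R)}\to\norm{u(T)}{L^2(\R)}$ by $u\in\mathrm{C}([0,\infty);L^2(\R))$ and \eqref{conv-ueps-L2-ptwae}. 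The genuine difficulty is matching the left-hand sides with $\langle\alpha,u\rangle_V$ and $\langle\beta,u\rangle_V$: each is a pairing of a functional converging only weakly-$*$ in $V^*$ against a test function $u^\eps$ converging only weakly-$*$ in $V$, so the product does not pass to the limit by soft arguments.

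To resolve this I would, for $\langle\alpha,u\rangle_V$, bypass the approximation and invoke the abstract integration-by-parts (chain-rule) identity in the Gelfand triple $L^2(\R)\cap L^\infty(\R)\cap\mathrm{BV}(\R)\hookrightarrow L^2(\R)\hookrightarrow(L^2(\R)\cap L^\infty(\R)\cap\mathrm{BV}(\R))^*$, which applies thanks to $u\in L^\infty(0,T;L^2\cap L^\infty\cap\mathrm{BV})$, the regularity \eqref{reg-u+}, and $u\in\mathrm{C}([0,\infty);L^2(\R))$, and directly gives $\langle\alpha,u\rangle_V=\tfrac12\norm{u(T)}{L^2(\R)}^2-\tfrac12\norm{u_\mathrm{in}}{L^2(\R)}^2$. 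For $\langle\beta,u\rangle_V$ I would split $\langle\dx(u^\eps)^2/2,u^\eps\rangle_V=\langle\dx(u^\eps)^2/2,u\rangle_V+\langle\dx(u^\eps)^2/2,u^\eps-u\rangle_V$; the first term tends to $\langle\beta,u\rangle_V$ by \eqref{conv-abc-b} with the fixed element $u\in V$, and after an integration by parts combined with the perfect-derivative identity the correction reduces to $-\int_0^T\!\int_\R\tfrac{(u^\eps)^2}{2}\,\mathrm{d}\mathrm{D}_xu$. The absolutely continuous part is handled by $(u^\eps)^2\to u^2$ in $L^2_\mathrm{loc}$ via \eqref{conv-ueps-L2}; the main obstacle is the jump (concentration) part on the shock set. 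Here the decisive input is the vanishing viscous dissipation $\eps\norm{\dx u^\eps}{L^2((0,T)\times\R)}\to0$ from \eqref{conv-epspartialxueps-L2to0}: unlike the classical viscous Burgers scaling, it forces the shock layers to be wide compared to $\eps$ and precludes any energy-dissipation defect concentrating on the shocks, so that the advection contributes no residual term and $\langle\beta,u\rangle_V=0$. Establishing rigorously this absence of a concentration defect is the heart of the argument.
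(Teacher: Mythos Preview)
Your treatment of \eqref{weak-V}, the distributional identification \eqref{weak-V-id}, the regularity \eqref{reg-u+}, and the chain-rule identity \eqref{weak-V-id-ua} via the Gelfand triple $W\subset L^2(\R)\subset W^*$ is essentially the paper's argument.

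For \eqref{weak-V-id-ub}, however, your route diverges from the paper's and carries a genuine gap. You propose to pass the limit in the product $\langle\dx(u^\eps)^2/2,u^\eps\rangle_V$ by splitting off $\langle\dx(u^\eps)^2/2,u\rangle_V$ and controlling the remainder, which you reduce (up to sign) to $\int_0^T\!\int_\R (u^\eps)^2/2\,\mathrm{d}\mathrm{D}_xu\,\mathrm{d}t$. On the jump part of $\mathrm{D}_xu$ this pairing involves the pointwise values of $(u^\eps)^2$ at the jump points of $u$, where Lebesgue-a.e.\ convergence gives no information; your appeal to $\eps\|\dx u^\eps\|_{L^2}\to0$ is only a heuristic about shock-layer widths and does not by itself rule out a residual concentration contribution. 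You yourself flag this step as ``the heart of the argument'' but do not supply it, and there is no obvious way to extract it from the available estimates.

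The paper avoids this difficulty entirely by working directly at the limit level. It truncates with $\hat u_r:=u\,\chi_{[0,T]\times B_r}$, uses the perfect-derivative structure $\mathrm{D}_x(u^2/2)\cdot u=\mathrm{D}_x(u^3/3)$ to rewrite $\langle\beta,\hat u_r\rangle_V$ as the boundary term $\int_0^T\big(\tfrac{1}{3}u^3(t,r)-\tfrac{1}{3}u^3(t,-r)\big)\,\mathrm{d}t$, and then shows that this boundary term tends to $0$ as $r\to\infty$ by a short contradiction argument: if it stayed bounded below by some $c>0$ for all $r$, integrating in $r$ would force $\int_0^T\!\int_\R|u|^3\,\mathrm{d}x\,\mathrm{d}t=\infty$, contradicting $u\in L^\infty(0,T;L^\infty(\R)\cap L^1(\R))$. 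This uses only the regularity of the limit $u$ already established in \eqref{reg-u1} and completely sidesteps any concentration-compactness analysis.
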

\begin{proof}
{\bf Proof of \eqref{weak-V}: }
The uniform bound \eqref{uni-bd+} and convergence statements \eqref{conv-abc} allow it to test 
equation \eqref{CauchyP-burgers-plastic} by elements $v\in V$. 
Passing to the limit $\eps\to0$ in this weak formulation in $V^*$ gives   \eqref{weak-V-id}. 
\\[1ex]
{\bf Proof of the identification \eqref{weak-V-id}: }The first of \eqref{weak-V-id} follows from 
\begin{subequations}
\label{id-a}
\begin{align}
\langle \partial_tu^\eps,\phi\rangle_V
=\int_0^T\int_{\R}\partial_tu^\eps\phi\,\mathrm{d}x\,\mathrm{d}t&\to\langle \alpha,\phi\rangle_V\quad\text{ and }\\
\int_0^T\int_{\R}u^\eps\partial_t\phi\,\mathrm{d}x\,\mathrm{d}t&\to\int_0^T\int_{\R}u\partial_t\phi\,\mathrm{d}x\,\mathrm{d}t
\end{align}
\end{subequations}
for all $\phi\in \mathrm{C}^\infty_{\mathrm{c}}((0,T)\times\R)$ thanks to convergence results \eqref{conv-abc-a} and \eqref{conv-ueps-L2}, which shows that $\langle\alpha,\phi\rangle_V=-\int_0^T\int_\R u\partial_t\phi\,\mathrm{d}x\,\mathrm{d}t$. 
\par 
For the second result in \eqref{weak-V-id} we repeat above arguments to find 
$\langle \beta,\phi\rangle_V=-\int_0^T\int_\R \frac{u^2}{2}\partial_x\phi\,\mathrm{d}x\,\mathrm{d}t$ for all $\phi\in \mathrm{C}^\infty_{\mathrm{c}}((0,T)\times\R)$. Additionally, the uniform bound $\|\partial_x(u^\eps)^2/2\|_{V^*}\leq C$ also allows us to conclude the existence of an element $\hat\beta\in V^*$ such that
\begin{equation*}
u^\eps\partial_x u^\eps\overset{*}{\rightharpoonup}\hat\beta\quad\text{in }V^*\,.
\end{equation*}
Repeating once more the argument of \eqref{id-a} for $(u^\eps\partial_x u^\eps)_\eps$ and $\hat\beta,$ also using the convergence result \eqref{conv-ueps-wstarLinfty-BV}, shows that
\begin{equation*}
\langle \beta,\phi\rangle_V=\lim_{\eps\to0}\langle (\partial_x u^\eps)^2/2,\phi\rangle_V
=\lim_{\eps\to0}\langle u^\eps\partial_x u^\eps,\phi\rangle_V=\langle \hat\beta,\phi\rangle_V
\end{equation*}
for all $\phi\in \mathrm{C}^\infty_{\mathrm{c}}((0,T)\times\R)$, which gives the second of \eqref{weak-V-id}.
\par
Now, for the third statement in \eqref{weak-V-id} we also argue like above and use in addition that 
\begin{equation*}
\big|\langle-\eps\partial_x^2u^\eps,\phi\rangle_V\big|=\Big|\int_0^T\int_{\R}\eps\partial_xu^\eps\partial_x\phi\,\mathrm{d}\,\mathrm{d}t\Big|
\to0\quad\text{ as $\eps\to0$}
\end{equation*}
thanks to convergence result \eqref{conv-epspartialxueps-L2to0}. 
\\[1ex]
{\bf Proof of regularity property \eqref{reg-u+}: }
It follows from the bounds \eqref{uni-bd-12} and from the identification  \eqref{weak-V-id} that 
$\mathrm{D}_tu\in L^\infty(0,T;(L^\infty(\R)\cap\mathrm{BV}(\mathbb{R}))^*)$. Regularity \eqref{reg-u} provides $u\in L^\infty(0,T;L^\infty(\R)\cap\mathrm{BV}(\mathbb{R}))=V\subset V^*$ and we conclude \eqref{reg-u+}. 
\\[1ex]
{\bf Proof of the identification \eqref{weak-V-id-u}: }
To find \eqref{weak-V-id-ua} we note that $W=(L^\infty(\R)\cap\mathrm{BV}(\R))\subset L^2(\R)\subset (L^\infty(\R)\cap\mathrm{BV}(\R))^*=W^*$ forms a Gelfand triple 
with $(L^\infty(\R)\cap\mathrm{BV}(\R))\subset L^2(\R)$ continuously and densely. 
Then $W^{1,p,p'}(0,T;W,W^*):=\{v\in L^p(0,T;W),\,
\partial_t v\in L^{p'}(0,T;W^*)\}\subset \mathrm{C}(0,T;L^2(\R))$ and for all $v,w\in W^{1,p,p'}(0,T;W,W^*),$ $t_1\leq t_2\in [0,T]$ the following integration-by-parts formula holds true, cf.\ \cite[L.\ 7.3]{RoubicekNLPDE}
\begin{equation*}
\int_{\R}v(t_2)w(t_2)\,\mathrm{d}x-\int_{\R}v(t_1)w(t_1)\,\mathrm{d}x
=\int_{t_1}^{t_2}\big(\langle\mathrm{D}_tv,w\rangle_W+\langle\mathrm{D}_tw,v\rangle_W\big)\,\mathrm{d}t\,.
\end{equation*}
For $v=w=u$ this gives   \eqref{weak-V-id-ua}. 
\par
In order to verify \eqref{weak-V-id-ub} consider $\hat u_r:=u\chi_{[0,T]\times B_r}$ with 
$\chi_{[0,T]\times B_r}\in L^\infty(0,T;V)$   
the characteristic function of the set $[0,T]\times B_r$. For every $r>0$ we have
\begin{equation}
\label{weak-V-id-ub1}
\begin{split}
\langle\beta,\hat u_r\rangle_V=\langle\mathrm{D}_x(u^2/2),\hat u_r\rangle_V
&=\langle\mathrm{D}_x(\hat u_r^3/3),\chi_{[0,T]\times B_r}\rangle_V\\
&=\langle-\hat u_r^3/3,0\rangle_V+
\int_0^T\Big(\frac{\hat u_r^3}{3}(t,r)-\frac{\hat u_r^3}{3}(t,-r)\Big)\,\mathrm{d}t\,.
\end{split}
\end{equation}
Accordingly, \eqref{weak-V-id-ub} holds true, if  
\begin{equation}
\label{weak-V-id-ub2}
\int_0^T\Big(\frac{\hat u_r^3}{3}(t,r)-\frac{\hat u_r^3}{3}(t,-r)\Big)\,\mathrm{d}t\to0\quad\text{as }r\to\infty
\end{equation}
in \eqref{weak-V-id-ub1}. To show \eqref{weak-V-id-ub2} we proceed by contradiction. 
Since 
\begin{equation*}
\langle\beta,u\rangle_V=\lim_{r\to\infty}\langle\beta,\hat u_r\rangle_V
=\lim_{r\to\infty}\int_0^T\Big(\frac{\hat u_r^3}{3}(t,r)-\frac{\hat u_r^3}{3}(t,-r)\Big)\,\mathrm{d}t
\end{equation*}
is a well-defined limit, it suffices to show that the assumption
\begin{equation}
\label{weak-V-id-ub3}
\exists\,c>0\;\;\forall r>0:\;\;\int_0^T\Big|\frac{\hat u_r^3}{3}(t,r)\Big|+\Big|\frac{\hat u_r^3}{3}(t,-r)\Big|\,\mathrm{d}t>c
\end{equation}
leads to a contradiction. Indeed, by regularity result \eqref{reg-u1} we have for a.a.\ $r>0$
\begin{equation}
\label{weak-V-id-ub4}
\begin{split}
\int_0^T\Big|\frac{\hat u_r^3}{3}(t,r)\Big|+\Big|\frac{\hat u_r^3}{3}(t,-r)\Big|\,\mathrm{d}t
&\leq\int_0^\infty\int_0^T\Big|\frac{\hat u_r^3}{3}(t,r)\Big|+\Big|\frac{\hat u_r^3}{3}(t,-r)\Big|\,\mathrm{d}t\,\mathrm{d}r\\
&\leq\int_0^T\int_{\R}\Big|\frac{u^3}{3}(t,x)\Big|\,\mathrm{d}x\,\mathrm{d}t\\
&\leq \frac{T}{3}\|u\|_{L^\infty(0,T;L^\infty(\R))}^2\|u\|_{L^\infty(0,T;L^1(\R))}
\end{split}
\end{equation}
and therefore, putting \eqref{weak-V-id-ub3} and \eqref{weak-V-id-ub4} together, we arrive at the following contradiction:
\begin{align*}
\infty>\int_0^T\int_{\R}\Big|\frac{\hat u_r^3}{3}(t,x)\Big|\,\mathrm{d}x\,\mathrm{d}t
\geq&\int_0^\infty\int_0^T\Big|\frac{\hat u_r^3}{3}(t,r)\Big|+\Big|\frac{\hat u_r^3}{3}(t,-r)\Big|\,\mathrm{d}t\,\mathrm{d}r\\
\geq& 
\int_0^\infty c\,\mathrm{d}r=\infty\,.
\end{align*}
Thus, assumption \eqref{weak-V-id-ub3} is false and \eqref{weak-V-id-ub2} has to be true, which concludes the proof of \eqref{weak-V-id-ub}.
\end{proof}
%
\subsection{Characterization of the limit $R$ in $Y^*:=X\cap L^1(0,T;\mathrm{BV}(\R))$}
\label{Sec:CharR-Detail}
%
In the following we want to characterize the limit $R$ obtained in Thm.\ \ref{Ex-BV-Sol-Limit} by relating it to an element of the subdifferential of the total variation functional. In view of the regularity result \eqref{reg-u}, we introduce 
\begin{equation}
\label{def-Psi}
\begin{split}
\Psi&:X\to[0,\infty]\,,\\
\Psi(u)&:=
\left\{\begin{array}{cl}
\int_0^T|\mathrm{D}_xu|(\mathbb{R})\,\mathrm{d}t
&\text{ if }u\in Y,\\
\infty&\text{ otherwise,}
\end{array}\right.\\
\text{where }X&:=L^{2}((0,T);L^2(\mathbb{R}))
\;\text{ and }\;Y:=X\cap L^1(0,T;\mathrm{BV}(\R))\,.
\end{split}
\end{equation}
We note that this functional 
\begin{equation}
\label{prop-Psi}
\Psi \text{ is positively $1$-homogeneous and satisfies $\Psi(0)=0$. }
\end{equation}
\par 
The identification of $R$ will now be achieved with the aid of the following result from convex analysis, cf.\ e.g.\ \cite[Lemma 1.3.1]{MieRou15RIST}: 
\begin{proposition}
\label{Char-Subdiff-1Homog}
Let $\Phi:X\to[0,\infty]$ be a positively $1$-homogeneous functional on the space $X$ with $\Phi(0)=0$. Then, for any $v\in X$ there holds
\begin{equation}
\label{char-subdiff-1homog}
\eta\in\partial\Phi(v)
\qquad\Leftrightarrow\qquad
\left\{
\begin{array}{lrl}
\text{i)}&\eta&\!\!\in\partial\Phi(0),\\[0.5ex]
\text{ii)}&\langle\eta,v\rangle_X&\!\!=\Phi(v).
\end{array}
\right.
\end{equation}
\end{proposition}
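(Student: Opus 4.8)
The plan is to argue both implications in \eqref{char-subdiff-1homog} directly from the definition of the convex subdifferential, which here reads $\eta\in\partial\Phi(v)$ if and only if $\langle\eta,w-v\rangle_X\le\Phi(w)-\Phi(v)$ for all $w\in X$, in complete analogy to \eqref{def-subdiff}; throughout, $\langle\cdot,\cdot\rangle_X$ denotes the inner product of $X$. The only structural facts I would invoke are the positive $1$-homogeneity of $\Phi$, the normalization $\Phi(0)=0$, and the standard convention that $\partial\Phi(v)=\emptyset$ whenever $\Phi(v)=\infty$, so that $\Phi(v)<\infty$ may be assumed at any point where the subdifferential is nonempty.

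For the implication ``$\Leftarrow$'' I would assume i) and ii). Specializing the subgradient inequality at $0$ contained in i), namely $\langle\eta,w-0\rangle_X\le\Phi(w)-\Phi(0)$, and using $\Phi(0)=0$, yields $\langle\eta,w\rangle_X\le\Phi(w)$ for every $w\in X$. Subtracting from this the equality $\langle\eta,v\rangle_X=\Phi(v)$ provided by ii) gives $\langle\eta,w-v\rangle_X\le\Phi(w)-\Phi(v)$ for all $w$, which is exactly $\eta\in\partial\Phi(v)$.

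For the implication ``$\Rightarrow$'' I would start from $\eta\in\partial\Phi(v)$, i.e.\ $\langle\eta,w-v\rangle_X\le\Phi(w)-\Phi(v)$ for all $w\in X$, noting that $\Phi(v)<\infty$. The crucial step is to test this inequality at two carefully chosen points that exploit the homogeneity: taking $w=2v$ and using $\Phi(2v)=2\Phi(v)$ produces $\langle\eta,v\rangle_X\le\Phi(v)$, while taking $w=0$ and using $\Phi(0)=0$ produces $-\langle\eta,v\rangle_X\le-\Phi(v)$, that is $\langle\eta,v\rangle_X\ge\Phi(v)$. Together these establish ii). Finally, inserting ii) back into the subgradient inequality cancels the terms $\pm\Phi(v)$ and leaves $\langle\eta,w\rangle_X\le\Phi(w)=\Phi(w)-\Phi(0)$ for all $w$, which is precisely the statement $\eta\in\partial\Phi(0)$, i.e.\ i).

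I do not anticipate a genuine obstacle here: the argument is elementary once the two homogeneity-based test points $w=2v$ and $w=0$ have been identified as the right probes. The only point requiring a little care is to record at the outset that a nonempty subdifferential forces $v$ into the effective domain of $\Phi$, so that the scaling identity $\Phi(2v)=2\Phi(v)$ is applied to a finite quantity and all manipulations remain meaningful.
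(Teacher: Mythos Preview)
Your proof is correct and is the standard elementary argument for this well-known characterization. The paper does not actually supply its own proof of this proposition but merely cites \cite[Lemma 1.3.1]{MieRou15RIST}, so there is nothing to compare against; your derivation via the two test points $w=2v$ and $w=0$ is exactly the expected one.
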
 
In Theorem \ref{Char-Ri} we will show that the limit pair $(u,R)$ satisfies property 
\eqref{char-subdiff-1homog}, i) for the functional $\Psi$ 
from \eqref{def-Psi} and in Theorem \ref{Char-Rii}  that also property 
\eqref{char-subdiff-1homog}, ii) holds true. More precisely, with these two theorems we obtain the existence of an element 
$\zeta\in X^*$ with the properties
\begin{equation*}
\zeta\in\partial\Psi(u)\quad\text{ and }\quad\zeta=-\mathrm{D}_xR\;\text{ in the sense of distributions.}
\end{equation*}
Recall that $|R|\leq1$ a.e.\ in $(0,T)\times\R$ by \eqref{reg-R}. Therefore the obtained characterization of $R$ is in line with results in literature that give a characterization of the subdifferential of the total variation functional, 
cf.\ e.g.\ \cite[Prop.\ 3.1]{ChGoNo15FPSC}, \cite[Thm.\ 26]{AndMaz05TVF}, \cite[p.\ 482f]{BeCaNo02TVFR}.  
\begin{thm}[Characterization of the limit $R$, part I: \eqref{char-subdiff-1homog}, i)]
\label{Char-Ri}
Set
\begin{equation*}
X:=L^2(0,T;L^2(\R))
\quad\text{and}\quad
Y:=X\cap L^1(0,T;\mathrm{BV}(\R))\,.
\end{equation*}
Let $(u,R)$ be the limit pair obtained from the sequence of approximate solutions $(u^\varepsilon)_\eps$ by Thm.\ \ref{Ex-BV-Sol-Limit} and consider the functional $\Psi:X\to[0,\infty]$ from \eqref{def-Psi}. Then, the following statements hold true:
\begin{subequations}
\begin{eqnarray}
\label{elem-subdiff-eps}
-\partial_x R^\varepsilon\in\partial\Psi(0)\quad\text{for all }\varepsilon>0\,,
\end{eqnarray}
and there is a subsequence and a limit 
\begin{equation}
\label{conv-Reps2a}
\zeta\in X^*
\end{equation} 
such that
\begin{eqnarray}
\label{conv-Reps2b}
-\partial_x R^\varepsilon&\overset{*}{\rightharpoonup}&\zeta\quad\text{ in }Y^*\,,\\
\label{id-limit1}
\zeta&=&-\mathrm{D}_x R\quad\text{in the sense of distributions}\,,\\
\label{id-limit2}
\zeta&\in&\partial\Psi(0)\,.
\end{eqnarray}
\end{subequations}
\end{thm}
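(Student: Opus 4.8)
The plan is to read off everything from the fact, recorded in \eqref{prop-Psi}, that $\Psi$ is positively $1$-homogeneous with $\Psi(0)=0$, so that by the very definition of the subdifferential
\[
\partial\Psi(0)=\{\eta\in X^*:\ \langle\eta,v\rangle_X\le\Psi(v)\ \text{ for all }v\in X\}.
\]
I would first prove the $\varepsilon$-level inclusion \eqref{elem-subdiff-eps}. For $v\in X\setminus Y$ one has $\Psi(v)=\infty$ and there is nothing to check, so fix $v\in Y$. By the regularity \eqref{reg-ueps} the stress $R^\varepsilon$ is smooth and $-\partial_x R^\varepsilon\in L^2=X^*$, so $\langle-\partial_x R^\varepsilon,v\rangle_X$ is the genuine $L^2$-product. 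Integrating by parts in $x$ and discarding the boundary contributions at $\pm\infty$ (which vanish because $R^\varepsilon$ and $v(t,\cdot)$ decay at infinity, in the spirit of the far-field relations \eqref{int-far-field}) gives
\[
\langle-\partial_x R^\varepsilon,v\rangle_X=\int_0^T\!\!\int_{\R}R^\varepsilon\,\mathrm{d}\mathrm{D}_xv(t)\,\mathrm{d}t\le\int_0^T|\mathrm{D}_xv(t)|(\R)\,\mathrm{d}t=\Psi(v),
\]
where the inequality uses the pointwise bound $|R^\varepsilon|\le1$ from \eqref{unif-bd-Reps-Linfty}. This is exactly \eqref{elem-subdiff-eps}.

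Next I would extract the limit. Applying the inequality to $\pm v$ and using that $\Psi$ is even yields $|\langle-\partial_x R^\varepsilon,v\rangle|\le\Psi(v)\le\|v\|_Y$ for all $v\in Y$, hence the uniform bound $\|-\partial_x R^\varepsilon\|_{Y^*}\le1$. By the Banach--Alaoglu theorem there is a (not relabelled) subsequence and an element $\zeta$ with $-\partial_x R^\varepsilon\overset{*}{\rightharpoonup}\zeta$ in $Y^*$, which is \eqref{conv-Reps2b}. To identify $\zeta$ I would restrict to test functions $\phi\in\mathrm{C}_c^\infty((0,T)\times\R)\subset Y$ and pass to the limit in $\langle-\partial_x R^\varepsilon,\phi\rangle=\int_0^T\int_{\R}R^\varepsilon\,\partial_x\phi\,\mathrm{d}x\,\mathrm{d}t$, using the weak-$*$ convergence $R^\varepsilon\overset{*}{\rightharpoonup}R$ from \eqref{conv-Reps-wstarLinfty} (or \eqref{conv-Reps-LinftyBV}); this gives $\langle\zeta,\phi\rangle=\int_0^T\int_{\R}R\,\partial_x\phi=\langle-\mathrm{D}_xR,\phi\rangle$, i.e.\ \eqref{id-limit1}, and together with $R\in L^\infty(0,\infty;\mathrm{BV}(\R))$ from \eqref{reg-R} this pins down $\zeta$ as the element claimed in \eqref{conv-Reps2a}.

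Finally, for \eqref{id-limit2} I would use that $\partial\Psi(0)$ is convex and weak-$*$ closed, being the intersection over $v\in Y$ of the weak-$*$ closed half-spaces $\{\eta:\langle\eta,v\rangle\le\Psi(v)\}$ (each $\eta\mapsto\langle\eta,v\rangle$ is weak-$*$ continuous). Since every $-\partial_x R^\varepsilon$ lies in $\partial\Psi(0)$ by \eqref{elem-subdiff-eps} and the sequence converges weak-$*$ to $\zeta$, the limit inherits the membership, so $\zeta\in\partial\Psi(0)$. Equivalently, and more transparently, one may simply pass to the limit $\varepsilon\to0$ in the fixed-$v$ inequality $\langle-\partial_x R^\varepsilon,v\rangle\le\Psi(v)$ for each $v\in Y$.

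The main obstacle, in my view, is not this convex-analytic skeleton but the functional-analytic bookkeeping around the duality with $\mathrm{BV}$ test functions. One must justify the Gauss--Green formula $\int_{\R}(-\partial_x R^\varepsilon)v\,\mathrm{d}x=\int_{\R}R^\varepsilon\,\mathrm{d}\mathrm{D}_xv$ for $v(t,\cdot)\in\mathrm{BV}(\R)$ with genuinely vanishing boundary terms, handle the weak-$*$ limit in the dual of the non-reflexive space $Y$, and — most delicately — reconcile the identification $\zeta=-\mathrm{D}_xR$, which a priori is only a measure in $x$, with the requirement $\zeta\in X^*$ underlying membership in $\partial\Psi(0)$; this last point rests essentially on the $\mathrm{BV}$-regularity of $R$ recorded in \eqref{reg-R}.
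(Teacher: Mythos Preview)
Your overall plan matches the paper's: derive $-\partial_x R^\varepsilon\in\partial\Psi(0)$ from $|R^\varepsilon|\le1$, extract a weak-$*$ limit in $Y^*$ via Banach--Alaoglu, identify it distributionally with $-\mathrm{D}_xR$ by testing against smooth $\phi$, and pass to the limit in the fixed-$v$ inequality to get \eqref{id-limit2}.

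Two points of divergence are worth recording. For \eqref{elem-subdiff-eps} the paper avoids the Gauss--Green formula with $\mathrm{BV}$ test functions that you flag as delicate: instead it approximates a given $v\in X$ by smooth $v_\kappa^\varepsilon$ (density of $\mathrm{C}^\infty$ in $L^2$), integrates by parts against the smooth approximant, uses $|R^\varepsilon|\le1$, and then invokes \emph{strict} convergence of smooth approximants in $\mathrm{BV}$ to pass from $\Psi(v_\kappa^\varepsilon)$ to $\Psi(v)$. This buys you freedom from the far-field issues for non-smooth $v$.

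The more substantive gap is your route to \eqref{conv-Reps2a}. You claim that $R\in L^\infty(0,\infty;\mathrm{BV}(\R))$ ``pins down $\zeta$'' as an element of $X^*=L^2$, and you repeat in your last paragraph that this step ``rests essentially on the $\mathrm{BV}$-regularity of $R$''. But $\mathrm{BV}$-regularity of $R$ only tells you that $\mathrm{D}_xR$ is a bounded measure in $x$; it does not place $-\mathrm{D}_xR$ in $L^2((0,T)\times\R)$, so this does not yield $\zeta\in X^*$. The paper takes a different route: having obtained $\tilde\zeta\in Y^*$ from Banach--Alaoglu, it invokes the Hahn--Banach extension theorem to produce an abstract continuous linear functional $\zeta:X\to\R$ agreeing with $\tilde\zeta$ on $Y$. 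This extension is the object in \eqref{conv-Reps2a}; the distributional identity \eqref{id-limit1} is then verified only on smooth $\phi\in Y$, and \eqref{id-limit2} follows because for $v\in X\setminus Y$ one has $\Psi(v)=\infty$ while for $v\in Y$ the inequality is inherited from $\tilde\zeta$.
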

\begin{proof}
{\bf Proof of \eqref{elem-subdiff-eps}: }
Let $v\in X=L^2(0,T;L^2(\R))$. Since $\mathrm{C}^\infty(0,T;\R)$ is dense in $X$ we find for every $\kappa>0$ and for every $\eps>0$ an element 
$v_\kappa^\eps\in \mathrm{C}^\infty(0,T;\R)$ such that 
\begin{subequations}
\label{elem-subdiff-eps0001}
\begin{equation}
\label{elem-subdiff-eps00011}
\big| \langle-\partial_xR^\varepsilon,v\rangle_{X}-\langle-\partial_xR^\varepsilon,v_\kappa^\eps\rangle_{X}\big|
\leq \|-\partial_xR^\eps\|_{X}\|v-v_\kappa^\eps\|_X\leq C(\eps,T)\|v-v_\kappa^\eps\|_X\leq\kappa\,,
\end{equation}
by estimate \eqref{uni-bd-sol-DCP-smooth9} as well as 
\begin{equation}
\label{elem-subdiff-eps00012}
\big| \langle-\eps\partial_x^2u^\eps,v\rangle_{X}-\langle-\eps\partial_x^2u^\eps,v_\kappa^\eps\rangle_{X}\big|
\leq \eps\|-\partial_x^2u^\eps\|_{X}\|v-v_\kappa^\eps\|_X\leq C(\eps,T)\|v-v_\kappa^\eps\|_X\leq\kappa\,,
\end{equation}
\end{subequations}
by estimate \eqref{uni-bd-sol-DCP-smooth5}. 
Thanks to \eqref{elem-subdiff-eps00011}  we have for all $\kappa>0$ and all $\eps>0$
\begin{subequations}
\begin{equation}
\label{elem-subdiff-eps001}
\begin{split}
\langle-\partial_xR^\varepsilon,v\rangle_{X}
&\leq \langle-\partial_xR^\varepsilon,v_\kappa^\eps\rangle_{X}+\kappa
=\int_0^T\int_{\R}
R_\eps\partial_x v^\eps_\kappa \,\mathrm{d}t+\kappa\\
&\leq\int_0^T\int_{\R}|R^\eps|\,|\partial_xv_\kappa^\eps|\,\mathrm{d}x\,\mathrm{d}t+\kappa
\leq\Psi(v_\kappa^\eps)+\kappa\,,
\end{split}
\end{equation}
since $|R^\eps|=\frac{|\partial_xu^\varepsilon|}{\sqrt{|\partial_xu^\varepsilon|^2+\varepsilon^2}}\leq1$ in $(0,T)\times\mathbb{R}$. Similarly we also find  with \eqref{elem-subdiff-eps00012} for all $\kappa>0$ and all $\eps>0$
\begin{equation}
\label{elem-subdiff-eps0012}
\begin{split}
\langle-\eps\partial_x^2u^\varepsilon,v\rangle_{X}
&\leq\langle-\eps\partial_x^2u^\varepsilon,v_\kappa^\eps\rangle_{X}+\kappa
=\int_0^T\int_{\R}\eps\partial_xu^\varepsilon\partial_xv_\kappa^\eps\,\mathrm{d}x\,\mathrm{d}t+\kappa\\
&\leq \sqrt{\eps}\|\partial_xu^\varepsilon\|_{L^2((0,T)\times\R)}\,\sqrt{\eps}\|\partial_xv^\varepsilon_\kappa\|_{L^2((0,T)\times\R)}+\kappa\\
&\leq C\,\sqrt{\eps}\|\partial_xv^\varepsilon_\kappa\|_{L^2((0,T)\times\R)}+\kappa
\end{split}
\end{equation}
\end{subequations}
thanks to estimate \eqref{unif-bd-viscosity}. 
Moreover, since $\mathrm{C}^\infty(0,T;\R)$ is also dense in $L^2(0,T;\mathrm{BV}(\R))$ with respect to strict convergence, for each $\eps>0$ and $\kappa>0,$ and for every $v\in L^2(0,T;\mathrm{BV}(\R))$ a function $v^\eps_\kappa\in \mathrm{C}^\infty(0,T;\R)$ can be found such that additionally also
\begin{equation}
\label{elem-subdiff-eps01}
\big|\Psi(v^\eps_\kappa)-\Psi(v)\big|\leq\kappa\,\quad\text{for all $v\in L^2(0,T;\mathrm{BV}(\R))$. }
\end{equation}
Since \eqref{elem-subdiff-eps001} and \eqref{elem-subdiff-eps01} hold true for all $\eps>0$ and for all $\kappa>0$ we also conclude that 
\begin{equation}
\label{elem-subdiff-eps1}
\langle-\partial_xR^\varepsilon,v\rangle_{X}\leq\Psi(v)\quad\text{ for all }v\in X\,.
\end{equation}
This follows from the fact that  $\Psi(v)=\infty$ for any $v\in X\backslash L^2(0,T;\mathrm{BV}(\R))$. This proves \eqref{elem-subdiff-eps}. 
\par
However, note that estimates \eqref{elem-subdiff-eps00012} and \eqref{elem-subdiff-eps0012} do not allow us to conclude a 
uniform estimate alike \eqref{elem-subdiff-eps1}. This is due to the fact that, according to \eqref{uni-bd-sol-DCP-smooth44}, the constant $C(\eps,T)$ in \eqref{elem-subdiff-eps00012} blows up exponentially as $\eps\to0$. This must be compensated by $\|v-v_\kappa^\eps\|_X$ small enough for given $\kappa>0$ in \eqref{elem-subdiff-eps00012}. Since $v^\eps_\kappa$ is obtained from $v$ by convolution with 
a mollifier, this can be achieved by choosing the support of the mollifier suitably small. In turn, the gradient $\partial_xv^\eps_\kappa$ will be very large, so that its $L^2$-norm may be rather related to the exponential blow-up in $\eps$ and cannot be controlled by   $\sqrt{\eps}$ in \eqref{elem-subdiff-eps0012}.  
\\[1ex]
{\bf Proof of \eqref{conv-Reps2a} and  \eqref{conv-Reps2b}: } Set $Y:=X\cap L^1(0,T;\mathrm{BV}(\R))\subset X$. Using \eqref{elem-subdiff-eps1} we find
\begin{equation}
\underset{\|v\|_Y\leq1}{\sup_{v\in Y}}\langle-\partial_xR^\varepsilon,v\rangle_{X}\leq\Psi(v)\leq\|v\|_Y\leq1\,.
\end{equation}
Thus, by Banach-Alaoglu's theorem we conclude the existence of a (not relabelled) subsequence
\begin{equation*}
-\partial_xR^\varepsilon\overset{*}{\rightharpoonup} \tilde\zeta\quad\text{ in }Y^*\,,
\end{equation*}
which is  \eqref{conv-Reps2b}. 
The space $Y$ is a linear subspace of the normed vector space $X$.  Thus, by Hahn-Banach's extension theorem, the 
continuous linear functional $\tilde\zeta:Y\to\R$ can be extended to a continuous linear functional $\zeta:X\to\R$ 
such that $\|\tilde\zeta\|_{Y^*}=\|\zeta\|_{X^*}$. Therefore it holds
\begin{equation}
\label{conv-Reps21}
\langle\zeta,v\rangle_X=\langle\zeta,v\rangle_Y\quad\text{ for all }v\in Y\,.
\end{equation}
This proves \eqref{conv-Reps2a}. 
\\[1ex]
{\bf Proof of \eqref{id-limit1}: }For every $\varepsilon>0$ and for all $\phi\in \mathrm{C}_0^\infty(\mathbb{R})$ there holds
\begin{equation}
\int_0^T\int_{\R}\partial_xR^\varepsilon\phi\,\mathrm{d}x\,\mathrm{d}t
=-\int_0^T\int_{\R} R^\varepsilon\partial_x\phi\,\mathrm{d}x\,\mathrm{d}t\,,
\end{equation}
where
\begin{subequations}
\begin{eqnarray}
\label{id-limit11}
\int_0^T\int_{\R}\partial_xR^\varepsilon\phi\,\mathrm{d}x\,\mathrm{d}t&\to&\langle\zeta,\phi\rangle_Y=\langle\zeta,\phi\rangle_X\,,\\
-\int_0^T\int_{\R} R^\varepsilon\partial_x\phi\,\mathrm{d}x\,\mathrm{d}t&\to&
-\int_0^T\int_{\R} R\partial_x\phi\,\mathrm{d}x\,\mathrm{d}t
\end{eqnarray}
\end{subequations}
by weak-strong convergence arguments. Note that the equality in \eqref{id-limit11} follows by \eqref{conv-Reps21}. 
This proves \eqref{id-limit1}.
\\[1ex]
{\bf Proof of \eqref{id-limit2}: }We have to show that 
\begin{equation}
\label{def-dPsi0}
\langle\zeta,v\rangle_{L^2}\leq\Psi(v)\quad\text{for all }v\in X\,.
\end{equation}
For this, observe that for all $\varepsilon>0$ and all $v\in Y$ there holds
\begin{equation}
\Psi(v)\geq\langle-\partial_xR^\varepsilon,v\rangle_X=\langle-\partial_xR^\varepsilon,v\rangle_Y\to\langle\zeta,v\rangle_Y=\langle\zeta,v\rangle_X\,.
\end{equation}
Moreover, if $v\in X\backslash Y,$ then $\Psi(v)=\infty$ and hence inequality \eqref{def-dPsi0} is trivially satisfied. 
\end{proof}
With the following theorem we also verify property \eqref{char-subdiff-1homog}, ii): 
\begin{thm}[Characterization of the limit $R$, part II: \eqref{char-subdiff-1homog}, ii)]
\label{Char-Rii}
Let $(u,R)$ be the limit pair obtained from the sequence of approximate solutions $(u^\varepsilon)_\eps$ by Thm.\ \ref{Ex-BV-Sol-Limit} and consider the functional $\Psi$ from \eqref{def-Psi}. Let $\gamma\in V^*$ be the limit of the subsequence $(-\partial_xR^\eps-\eps\partial_x^2u^\eps)_\eps$ obtained in Theorem \ref{Weak-V} and let $\zeta\in X^*$ be the limit of the subsequence 
$(-\partial_xR^\eps)_\eps$ obtained in Theorem \ref{Char-Ri}. Then the following statements hold true: 
\begin{subequations}
\begin{align}
\label{id-limit3}
\langle\zeta,u\rangle_X&=\Psi(u)\,,\\
\label{id-limit4}
\gamma&=\zeta=-\mathrm{D}_xR\quad\text{ in the sense of distributions\,,}\\
\label{id-limit5}
\gamma&=\zeta\quad\hspace*{9.5ex}\text{ in }V^*\,.
\end{align}
\end{subequations}
\end{thm}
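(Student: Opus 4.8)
The plan is to read off \eqref{id-limit3}--\eqref{id-limit5} from two facts already established — the weak formulation \eqref{weak-V} tested at $v=u$ together with the identifications \eqref{weak-V-id-u}, and the property $\zeta\in\partial\Psi(0)$ from \eqref{id-limit2} — supplemented by the energy--dissipation estimate \eqref{en-dissip-limit}. I would dispose of \eqref{id-limit4} first: Theorem \ref{Weak-V} gives $\gamma=-\mathrm{D}_xR$ and Theorem \ref{Char-Ri}, \eqref{id-limit1}, gives $\zeta=-\mathrm{D}_xR$, both in the sense of distributions; hence $\gamma=\zeta=-\mathrm{D}_xR$ distributionally, which is exactly \eqref{id-limit4}.

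The core is \eqref{id-limit3}, which I would prove by two opposite inequalities. The bound $\langle\zeta,u\rangle_X\le\Psi(u)$ is immediate, being the definition \eqref{def-dPsi0} of $\partial\Psi(0)$ — i.e.\ property \eqref{id-limit2} — evaluated at $v=u$. For the reverse inequality I would first extract the energy balance of the limit problem: testing \eqref{weak-V} with $v=u$ and inserting $\langle\alpha,u\rangle_V=\tfrac12\norm{u(T)}{L^2(\R)}^2-\tfrac12\norm{u_{\mathrm{in}}}{L^2(\R)}^2$ and $\langle\beta,u\rangle_V=0$ from \eqref{weak-V-id-u} yields
\begin{equation*}
\langle\gamma,u\rangle_V=\tfrac12\norm{u_{\mathrm{in}}}{L^2(\R)}^2-\tfrac12\norm{u(T)}{L^2(\R)}^2.
\end{equation*}
The estimate \eqref{en-dissip-limit} at $t=T$ then bounds the right-hand side from below by $\int_0^T|\mathrm{D}_xu|(\R)\,\mathrm{d}t=\Psi(u)$, so that $\langle\gamma,u\rangle_V\ge\Psi(u)$.

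What remains — and what I expect to be the main obstacle — is to transfer this lower bound from $\gamma$ to $\zeta$, which is precisely the content of \eqref{id-limit5}. The two limits differ only by the regularizing viscous flux, since $-\partial_xR^\eps=(-\partial_xR^\eps-\eps\partial_x^2u^\eps)+\eps\partial_x^2u^\eps$, so I must show that $\eps\partial_x^2u^\eps$ carries no mass in the limiting pairing. Against smooth test functions this is clear from $\eps\partial_xu^\eps\to0$ in $L^2$, see \eqref{conv-epspartialxueps-L2to0}; the delicate point is that $u$ only lies in $L^\infty(0,T;\mathrm{BV}(\R))$, so $\langle\eps\partial_x^2u^\eps,u\rangle$ cannot be treated by a naive integration by parts against the measure $\mathrm{D}_xu$, and smooth functions approximate $\mathrm{BV}$-functions only in the strict, not the strong, topology. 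I would therefore argue on the approximate level, exploiting the uniform bound \eqref{unif-bd-visceps-LinftyL1} on $\partial_x(R^\eps+\eps\partial_xu^\eps)$ in $L^\infty(0,T;L^1(\R))$ together with the strong convergence $u^\eps\to u$ in $L^2_{\mathrm{loc}}$ from \eqref{conv-ueps-L2}, in order to pass to the limit in the \emph{weak-times-weak} product and identify $\langle\zeta,u\rangle_X$ with $\lim_\eps\langle-\partial_xR^\eps,u^\eps\rangle=\lim_\eps\int_0^T\int_\R \frac{|\partial_xu^\eps|^2}{\sqrt{|\partial_xu^\eps|^2+\eps^2}}\,\mathrm{d}x\,\mathrm{d}t$. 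The liminf of this plastic dissipation already exceeds $\Psi(u)$ by the algebraic inequality \eqref{unif-bd-BVloc2} and the lower semicontinuity \eqref{liminf-L1partialxueps}, giving $\langle\zeta,u\rangle_X\ge\Psi(u)$; together with the upper bound this yields \eqref{id-limit3}. Finally, combining the distributional identity \eqref{id-limit4} with the vanishing of the viscous flux in the same weak sense shows that $\gamma$ and $\zeta$ agree as elements of $V^*$, which is \eqref{id-limit5}.
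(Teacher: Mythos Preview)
Your overall architecture matches the paper's: \eqref{id-limit4} is immediate from \eqref{weak-V-id} and \eqref{id-limit1}; the upper bound $\langle\zeta,u\rangle_X\le\Psi(u)$ comes from $\zeta\in\partial\Psi(0)$; and the crux is indeed the identification \eqref{id-limit5}. Your shortcut for the lower bound $\langle\gamma,u\rangle_V\ge\Psi(u)$ via the limit energy-dissipation estimate \eqref{en-dissip-limit} is a valid and slightly cleaner alternative to what the paper does, \emph{provided} \eqref{id-limit5} is in hand.

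The gap is in your handling of \eqref{id-limit5}. You propose to ``pass to the limit in the weak-times-weak product and identify $\langle\zeta,u\rangle_X$ with $\lim_\eps\langle-\partial_xR^\eps,u^\eps\rangle$'' using the uniform $L^\infty(0,T;L^1(\R))$ bound \eqref{unif-bd-visceps-LinftyL1} together with strong $L^2_{\mathrm{loc}}$ convergence of $u^\eps$. But \eqref{unif-bd-visceps-LinftyL1} bounds only the \emph{sum} $\partial_x(R^\eps+\eps\partial_xu^\eps)$, not $\partial_xR^\eps$ by itself, so your compensated-compactness sketch does not apply to the sequence you need. More fundamentally, a direct identification of the limit of $\langle-\partial_xR^\eps,u^\eps\rangle$ is exactly what neither approach can afford: both factors converge only weakly in the relevant dualities.

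The paper avoids this identification altogether by a sandwich. On the one hand, lower semicontinuity and \eqref{unif-bd-BVloc2} give $\Psi(u)\le\liminf_\eps\langle-\partial_xR^\eps,u^\eps\rangle_X$ (your liminf argument). On the other hand, testing the $\eps$-equation \eqref{eq:b-p-app} with $u^\eps$ and \emph{discarding} the nonnegative viscous dissipation $\eps\|\partial_xu^\eps\|_{L^2}^2$ yields the inequality
\[
\langle-\partial_xR^\eps,u^\eps\rangle_V\le\tfrac12\|u^\eps_{\mathrm{in}}\|_{L^2}^2-\tfrac12\|u^\eps(T)\|_{L^2}^2\,,
\]
whose right-hand side converges and is identified via \eqref{weak-V-id-u} with $\langle\gamma,u\rangle_V$. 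Thus $\limsup_\eps\langle-\partial_xR^\eps,u^\eps\rangle\le\langle\gamma,u\rangle_V$. The paper then proves \eqref{id-limit5} \emph{separately} --- not through any product-limit argument, but simply by noting that $-\partial_xR^\eps\overset{*}{\rightharpoonup}\zeta$ in $Y^*$ (hence, since $V\subset Y$, also weak-$*$ in $V^*$) while $-\partial_xR^\eps-\eps\partial_x^2u^\eps\overset{*}{\rightharpoonup}\gamma$ in $V^*$, and invoking uniqueness of weak-$*$ limits. With \eqref{id-limit5} in place, the chain closes: $\Psi(u)\le\limsup\le\langle\gamma,u\rangle_V=\langle\zeta,u\rangle_X\le\Psi(u)$, so all inequalities are equalities and \eqref{id-limit3} follows. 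The identification of $\lim_\eps\langle-\partial_xR^\eps,u^\eps\rangle$ is then a \emph{consequence}, not an input.
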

\begin{proof}
{\bf Proof of \eqref{id-limit4}: }Observe that \eqref{id-limit1} provides $\zeta=-\mathrm{D}_xR$ in the sense of distributions and \eqref{weak-V-id} 
provides $\gamma=-\mathrm{D}_xR$ in the sense of distributions, so that \eqref{id-limit4} follows immediately.
\\[1ex]
{\bf Proof of \eqref{id-limit5}: }It is $V:=L^\infty(0,T;L^\infty(\R)\cap\mathrm{BV}(\R))\subset X:=L^2((0,T)\times \R)$ densely. By convergence results \eqref{conv-abc}, \eqref{conv-Reps2b}, and by the uniqueness of the limit in $V^*$ we therefore conclude 
\begin{equation*}
\langle\gamma,v\rangle_V=\langle\zeta,v\rangle_X\quad\text{for all }v\in V\,,
\end{equation*}
which is \eqref{id-limit5}\,.
\\[1ex]
{\bf Proof of \eqref{id-limit3}: } In order to show relation \eqref{id-limit3} we verify the following chain of inequalities
\begin{equation}
\label{id-limit30}
\begin{split}
\Psi(u)
\overset{(1)}{\leq}\liminf_{\varepsilon\to0}\langle-\partial_xR^\varepsilon,u^\varepsilon\rangle_{X}
&\overset{(2)}{\leq}\limsup_{\varepsilon\to0}\langle-\partial_xR^\varepsilon,u^\varepsilon\rangle_{X}\\
&\overset{(3)}{=}\limsup_{\varepsilon\to0}\langle-\partial_xR^\varepsilon,u^\varepsilon\rangle_{V}\\
&\overset{(4)}{\leq}\langle\gamma,u\rangle_{V}\\
&\overset{(5)}{=}\langle\zeta,u\rangle_{X}+\big(\langle\gamma,u\rangle_{V}-\langle\zeta,u\rangle_{X}\big)\\
&\overset{(6)}{\leq}\Psi(u)\,.
\end{split}
\end{equation}
We now give the detailed arguments for each of the steps (1)-(6) in estimate \eqref{id-limit30}. 
\par
Estimate (1) follows by the lower semicontinuity of the total variation after carrying out an integration by parts and the following estimate 
\begin{equation}
\label{id-limit31}
\begin{split}
\langle-\partial_xR^\varepsilon,u^\varepsilon\rangle_{X}&=\int_0^T\int_{\R}-\partial_xR^\varepsilon\,u^\varepsilon\,\mathrm{d}x\,\mathrm{d}t\\
&\overset{\text{(1a)}}{=}\int_0^T\int_{\R}R^\varepsilon\,\partial_xu^\varepsilon\,\mathrm{d}x\,\mathrm{d}t
=\int_0^T\int_{\R}\frac{|\partial_xu^\varepsilon|^2}{\sqrt{|\partial_xu^\varepsilon|^2+\eps^2}}\,\mathrm{d}x\,\mathrm{d}t\\
&\geq\int_0^T\int_{B_r}\frac{|\partial_xu^\varepsilon|^2}{\sqrt{|\partial_xu^\varepsilon|^2+\eps^2}}\,\mathrm{d}x\,\mathrm{d}t
\overset{\text{(1b)}}{\geq}\int_0^T\int_{B_r}|\partial_xu^\varepsilon|\,\mathrm{d}x\,\mathrm{d}t-2r\eps T\,
\end{split}
\end{equation}
for all $r>0$. 
Above, the integration by parts in (1a) makes use of the far-field boundary condition \eqref{int-far-field3} and (1b) 
is deduced with the same argument as the uniform bound \eqref{unif-bd-BVloc}.  
Thanks to the lower semicontinuity result \eqref{liminf-L1partialxueps} estimate \eqref{id-limit31} allows us to conclude
\begin{equation}
\label{id-limit311}
\begin{split}
\liminf_{\eps\to0}\langle-\partial_xR^\varepsilon,u^\varepsilon\rangle_{X}
\geq\liminf_{\eps\to0}\Big(\int_0^T\int_{B_r}|\partial_xu^\varepsilon|\,\mathrm{d}x\,\mathrm{d}t-2r\eps T\Big)
\geq\int_0^T|\mathrm{D}_xu|(B_r)\,\mathrm{d}t
\end{split}
\end{equation}
for all $r>0$. Letting $r\to\infty$ in \eqref{id-limit311} and using the continuity from below of the measure, proves (1).
\par 
Now, estimate (2) in \eqref{id-limit30} is a direct consequence of the properties of the limit inferior and the limit superior. For relation (3) we observe that for every $\eps>0$ it is 
\begin{equation*}
\langle-\partial_xR^\varepsilon,u^\varepsilon\rangle_{X}=\langle-\partial_xR^\varepsilon,u^\varepsilon\rangle_{V}
\end{equation*}
by the regularity properties \eqref{reg-ueps}. Hence (3) follows. 
\par 
Next we verify estimate (4). For this, we make use of the equation \eqref{eq:b-p-app} to rewrite the term on the left-hand side  of (4) as follows
\begin{equation}
\label{id-limit34}
\begin{split}
\langle-\partial_xR^\varepsilon,u^\varepsilon\rangle_{V}
&=-\langle \partial_tu^\eps+\partial_x\frac{(u^\eps)^2}{2}-\eps\partial_x^2u^\eps,u^\eps\rangle_V\\
&=-\int_0^T\int_{\R}\Big(\partial_tu^\eps+\partial_x\frac{(u^\eps)^2}{2}\Big)u^\eps\,\mathrm{d}x\,\mathrm{d}t
+\int_0^T\int_{\R}\eps\big(\partial_x^2u^\eps\big) u^\eps\,\mathrm{d}x\,\mathrm{d}t\\
&=-\int_0^T\int_{\R}\Big(\partial_t\frac{(u^\eps)^2}{2}+\partial_x\frac{(u^\eps)^3}{3}\Big)\,\mathrm{d}x\,\mathrm{d}t
-\int_0^T\int_{\R}\eps(\partial_xu^\eps)^2\,\mathrm{d}x\,\mathrm{d}t\\
&\leq\frac{1}{2}\|u^\eps_{\mathrm{in}}\|_{L^2(\R)}^2-\frac{1}{2}\|u^\eps(T)\|_{L^2(\R)}^2-0-0\,.
\end{split}
\end{equation}
Here we used an integration by parts in space on the term $\int_0^T\int_{\R}\eps\big(\partial_x^2u^\eps\big) u^\eps\,\mathrm{d}x\,\mathrm{d}t$ to get from the second into the third line, also making use of the far-field boundary condition \eqref{int-far-field2}. The resulting term has a negative sign and is therefore estimated from above by $0$ when passing to the fourth line. Similarly, we also used an integration by parts in space for the term $-\int_0^T\int_{\R}\partial_x\frac{(u^\eps)^3}{3}\,\mathrm{d}x\,\mathrm{d}t$ together with the far-field boundary condition \eqref{int-far-field3} and an integration by parts in time for the term $-\int_0^T\int_{\R}\partial_t\frac{(u^\eps)^2}{2}\,\mathrm{d}x\,\mathrm{d}t$ to arrive at the fourth line. 
Thanks to the well-preparedness of the initial data \eqref{conv-ini-sL2} and convergence result \eqref{conv-ueps-L2-ptwae}, taking the limit superior on both sides of \eqref{id-limit34}, gives 
\begin{equation}
\label{id-limit341}
\limsup_{\eps\to0}\,\langle-\partial_xR^\varepsilon,u^\varepsilon\rangle_{V}
\leq\frac{1}{2}\|u_{\mathrm{in}}\|_{L^2(\R)}^2-\frac{1}{2}\|u(T)\|_{L^2(\R)}^2-0\,.
\end{equation}
At this point we make use of Theorem \ref{Weak-V}, in particular of the limit equation \eqref{weak-V} and the identification relations \eqref{weak-V-id-u} to rewrite the right-hand side of \eqref{id-limit341} as follows
\begin{equation}
\label{id-limit342}
\begin{split}
\limsup_{\eps\to0}\,\langle-\partial_xR^\varepsilon,u^\varepsilon\rangle_{V}
&\leq
\frac{1}{2}\|u_{\mathrm{in}}\|_{L^2(\R)}^2-\frac{1}{2}\|u(T)\|_{L^2(\R)}^2-0\\
&=-\langle\alpha,u\rangle_V-\langle\beta,u\rangle_V
=\langle\gamma,u\rangle_V\,,
\end{split}
\end{equation}
which is \eqref{id-limit34}. 
\par
Now (5) is obtained by adding and subtracting the term $\langle\zeta,u\rangle_{X}$. Finally (6) follows by applying \eqref{id-limit2}, 
i.e., the information 
$\zeta\in\partial\Psi(0)$ for the specific test function $v=u$ and by noting that the term 
\begin{equation*}
\big(\langle\gamma,u\rangle_{V}-\langle\zeta,u\rangle_{X}\big)=0
\end{equation*}
thanks to $u\in V$ and the identification \eqref{id-limit5}. This concludes the proof of estimate \eqref{id-limit3}.  
\end{proof}
%
\section{Existence of approximate solutions, Proof of Theorem \ref{Thm-Ex-Approx-Sol}}
\label{Sec-Pf-Ex-Approx-Sol}
%
In this section we give the proof of Theorem \ref{Thm-Ex-Approx-Sol}. The proof will be carried out in four steps
\begin{itemize}
\item[1.] Local-in-time solutions for a Dirichlet problem on a finite interval $B_r:=(-r,r)$ for any $r>0$ on any time interval $[0,T]$ (Proposition \ref{Ex-sol-DCP-smooth});
\item[2.] Global a priori estimates uniformly in $r$ (Proposition \ref{Prop-uni-bd-r});
\item[3.] Limit passage $r\to\infty$ (Proposition \ref{Prop-limit-r});
\item[4.] Verification of the integral far-field relations for the limit solutions (Proposition \ref{Prop-Far-field}).
\end{itemize}
{\bf To Step 1: }Here, for any $\eps>0$ fixed, the following Cauchy problem with homogeneous Dirichlet conditions is considered on $Q_{r}^T:=(0,T)\times B_r$ for arbitrary $T,r>0$: Find $u^{\eps,r}:[0,T]\times B_r\to\R$ such that
\begin{subequations}
\label{DCP-smooth}
\begin{alignat}{2}
\label{DCP-smooth1}
\partial_tu^{\eps,r}+\partial_x\Big(\frac{(u^{\eps,r})^2}{2}-R^{\eps,r}-\eps\partial_xu^{\eps,r}\Big)&=0&&\quad\text{ in }Q_r^T\,,\\
u^{\eps,r}(t,\pm r)&=0&&\quad\text{ for }t\in(0,T)\,,\\
u^{\eps,r}(0,x)&=u^{\eps,r}_{\mathrm{in}}(x)&&\quad\text{ for }x\in B_r\,,\\
\text{and with }R^{\eps,r}:=\frac{\partial_xu^{\eps,r}}{\sqrt{|\partial_xu^{\eps,r}|^2+\eps^2}}\;\;\text{ in }\eqref{DCP-smooth1}\,.&&
\end{alignat}
\end{subequations}
For Problem \eqref{DCP-smooth} one can show the following result: 
\begin{proposition}[Existence of solutions of \eqref{DCP-smooth}]
\label{Ex-sol-DCP-smooth}
Keep $\eps>0$ fixed. For all $r>0$ assume that $u^{\eps,r}_{\mathrm{in}}\in H^2(B_r)$ with $u^{\eps,r}_{\mathrm{in}}(\pm r)=0$. 
Then, for every $T>0$ and every $r>0$ there exists a solution $u^{\eps,r}:[0,T]\times B_r\to\R$ of problem  \eqref{DCP-smooth} of the regularity 
\begin{equation}
\label{reg-sol-r}
\begin{split}
u^{\eps,r}&\in L^\infty(0,T;H^2(B_r))\cap L^2(0,T;H^2(B_r))\quad\text{ with }\\
\partial_tu^{\eps,r}&\in L^\infty(0,T;L^2(B_r))\cap L^2(0,T;H^1(B_r))\,.
\end{split} 
\end{equation}
\end{proposition}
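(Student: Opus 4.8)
The plan is to construct $u^{\eps,r}$ by a Galerkin scheme, since for $\eps>0$ fixed the principal part of \eqref{DCP-smooth1} is uniformly parabolic: writing the flux as $\tfrac{(u^{\eps,r})^2}{2}-a_\eps(\partial_xu^{\eps,r})$ with $a_\eps(p):=\eps p+\tfrac{p}{\sqrt{p^2+\eps^2}}$, one has $a_\eps'(p)=\eps+\tfrac{\eps^2}{(p^2+\eps^2)^{3/2}}\in[\eps,\eps+\tfrac1\eps]$, so the diffusion coefficient is bounded and bounded away from zero. First I would fix the $L^2(B_r)$-orthonormal basis $(w_k)_k$ of Dirichlet eigenfunctions of $-\partial_{xx}$ on $B_r$ and seek $u_n(t,x)=\sum_{k=1}^n c_k^n(t)w_k(x)$ solving the projected system with $u_n(0)=P_nu^{\eps,r}_{\mathrm{in}}$. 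Because $a_\eps$ and $p\mapsto p/\sqrt{p^2+\eps^2}$ are smooth for $\eps>0$, the resulting ODE system has a smooth local solution by Picard--Lindel\"of, and the a priori estimates below rule out blow-up and thus yield existence on all of $[0,T]$. All estimates are carried out at the Galerkin level, where $u_n$ is smooth and $-\partial_{xx}u_n\in\mathrm{span}\{w_1,\dots,w_n\}$ is an admissible test function, while $u_n,\partial_{xx}u_n$ vanish at $\pm r$.

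The energy estimate comes from testing with $u_n$ and using $\int_{B_r}u_n^2\partial_xu_n\idx=0$, giving $\tfrac12\tfrac{\mathrm d}{\mathrm dt}\|u_n\|_{L^2}^2+\eps\|\partial_xu_n\|_{L^2}^2+\int_{B_r}\tfrac{|\partial_xu_n|^2}{\sqrt{|\partial_xu_n|^2+\eps^2}}\idx=0$, hence $\|u_n\|_{L^\infty(0,T;L^2)}\le\|u^{\eps,r}_{\mathrm{in}}\|_{L^2}$ and $\sqrt\eps\,\partial_xu_n\in L^2(Q_r^T)$. Next, and this is the decisive estimate, I differentiate the projected equation in time and test with $v_n:=\partial_tu_n$ (which vanishes at $\pm r$), obtaining $\tfrac12\tfrac{\mathrm d}{\mathrm dt}\|v_n\|_{L^2}^2+\eps\|\partial_xv_n\|_{L^2}^2\le\tfrac12\big|\int_{B_r}\partial_xu_n\,v_n^2\idx\big|$. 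Estimating the right-hand side by the one-dimensional Gagliardo--Nirenberg inequality $\|v_n\|_{L^4}^2\le C\|v_n\|_{L^2}^{3/2}\|\partial_xv_n\|_{L^2}^{1/2}$ and Young's inequality yields $\tfrac12\tfrac{\mathrm d}{\mathrm dt}\|v_n\|_{L^2}^2+\tfrac\eps2\|\partial_xv_n\|_{L^2}^2\le C_\eps\|\partial_xu_n\|_{L^2}^{4/3}\|v_n\|_{L^2}^2$. The crucial point is that the coefficient $\|\partial_xu_n\|_{L^2}^{4/3}$ lies in $L^1(0,T)$ by the energy estimate, so Gr\"onwall's inequality gives a bound on $\partial_tu_n$ in $L^\infty(0,T;L^2)\cap L^2(0,T;H^1)$ that is global in time, provided $\|\partial_tu_n(0)\|_{L^2}$ is controlled; the latter follows from $u^{\eps,r}_{\mathrm{in}}\in H^2(B_r)$ with $u^{\eps,r}_{\mathrm{in}}(\pm r)=0$ and $\|P_nu^{\eps,r}_{\mathrm{in}}\|_{H^2}\le C\|u^{\eps,r}_{\mathrm{in}}\|_{H^2}$.

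The $H^2$-regularity is then recovered pointwise in time from the now global bound on $\partial_tu_n$. Testing the projected equation with $-\partial_{xx}u_n$ gives, at each fixed $t$, the identity $(\partial_tu_n,-\partial_{xx}u_n)+\int_{B_r}(\eps+a_\eps'(\partial_xu_n)-\eps)|\partial_{xx}u_n|^2\idx=\int_{B_r}u_n\partial_xu_n\partial_{xx}u_n\idx$, which after bounding $(\partial_tu_n,-\partial_{xx}u_n)$ yields $\eps\|\partial_{xx}u_n\|_{L^2}\le\|u_n\|_{L^\infty}\|\partial_xu_n\|_{L^2}+\|\partial_tu_n\|_{L^2}$. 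Combining this with the interpolation inequalities $\|u_n\|_{L^\infty}\le C\|u_n\|_{L^2}^{1/2}\|\partial_xu_n\|_{L^2}^{1/2}$ and $\|\partial_xu_n\|_{L^2}^2\le\|u_n\|_{L^2}\|\partial_{xx}u_n\|_{L^2}$ and absorbing via Young's inequality produces a uniform bound $\|u_n\|_{L^\infty(0,T;H^2)}\le C(\eps,T)$; note that here no differential inequality in $t$ appears. With these uniform bounds, $u_n$ is bounded in $L^2(0,T;H^2)$ and $\partial_tu_n$ in $L^2(0,T;H^1)$, so the Aubin--Lions lemma (with $H^2\hookrightarrow\hookrightarrow H^1\hookrightarrow L^2$) provides a subsequence with $u_n\to u^{\eps,r}$ strongly in $L^2(0,T;H^1(B_r))$, whence $\partial_xu_n\to\partial_xu^{\eps,r}$ strongly in $L^2(Q_r^T)$ and a.e. The limit passage is then routine: $u_n^2\to(u^{\eps,r})^2$ by the strong $L^2$- and uniform $L^\infty$-bounds, while $R_n=F(\partial_xu_n)\to F(\partial_xu^{\eps,r})=R^{\eps,r}$ by continuity of $F(p)=p/\sqrt{p^2+\eps^2}$ together with $|R_n|\le1$ and dominated convergence; the weak-$*$ limits deliver the regularity \eqref{reg-sol-r} and identify $u^{\eps,r}$ as a solution of \eqref{DCP-smooth}.

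I would flag the time-differentiated estimate as the main obstacle. The quadratic advection term is not subcritical for a direct $H^1$- or $H^2$-energy estimate: testing the differentiated equation with $\partial_{xx}u_n$ generates a Riccati-type inequality $\tfrac{\mathrm d}{\mathrm dt}\|\partial_xu_n\|_{L^2}^2\le C\|u_n\|_{L^2}\|\partial_xu_n\|_{L^2}^3$ that blows up in finite time, so it only gives local existence. The entire scheme therefore hinges on the observation that the \emph{correct} higher-order quantity is $\partial_tu_n$, whose forcing coefficient is merely $L^1$ in time and hence Gr\"onwall-integrable; once $\partial_tu_n$ is controlled globally, $H^2$-control follows \emph{elliptically} at frozen time rather than by evolution, which is exactly what avoids the blow-up and promotes the local solution to one on an arbitrary interval $[0,T]$.
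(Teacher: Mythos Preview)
Your proof is correct and follows the Galerkin strategy the paper sketches, but your sequence of a priori estimates differs from the one the paper invokes (Proposition~\ref{Prop-uni-bd-r}). The paper first \emph{squares} the equation (cf.\ \eqref{uni-bd-sol-DCP-smooth41}--\eqref{uni-bd-sol-DCP-smooth43}) to obtain $L^\infty(0,T;H^1)\cap L^2(0,T;H^2)$, and then uses this intermediate regularity---via $\partial_xu\in L^2(0,T;L^\infty)$---as the Gr\"onwall coefficient in the time-differentiated estimate \eqref{uni-bd-sol-DCP-smooth72}. You bypass the squaring step entirely: by estimating $\int\partial_xu_n\,v_n^2$ through Gagliardo--Nirenberg and Young, you reduce the Gr\"onwall coefficient to $\|\partial_xu_n\|_{L^2}^{4/3}$, which is already $L^1$ in time from the basic energy identity. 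This is a genuine simplification---it reaches the global $\partial_tu$ bound in one step rather than two---and your subsequent elliptic recovery of $H^2$ at frozen time mirrors the paper's \eqref{uni-bd-sol-DCP-smooth81}--\eqref{uni-bd-sol-DCP-smooth82}. The paper also mentions a time-discretization, which you replace by Picard--Lindel\"of for the Galerkin ODE; either is adequate here. Your observation that the direct $H^1$/$H^2$ evolution estimate is Riccati-type and only local, whereas the $\partial_tu$-route is globally Gr\"onwall-integrable, correctly identifies why this ordering of estimates matters.
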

\begin{proof}
Problem \eqref{DCP-smooth} can be solved by a time-discretization and a Galerkin method making use of the  a priori estimates akin to the ones deduced below in Prop.\ \ref{Prop-uni-bd-r}. We omit the details of the proof here. 
\end{proof}
{\bf To Step 2: Global a priori estimates uniformly in $r$: }
\begin{proposition}[Global a priori estimates of the solutions $(u^{\eps,r})_r$ uniformly in $r$]
\label{Prop-uni-bd-r}
Let the assumptions of Prop.\ \ref{Ex-sol-DCP-smooth} hold true. Let $\eps>0$ and $T>0$ be fixed. Further assume that there is a constant $C_0>0$ such that  the sequence of initial data 
$(u^{\eps,r}_{\mathrm{in}})_r$ satisfies the following bound uniformly in $r>0$:
\begin{subequations}
\begin{equation}
\label{uni-bd-init-r1}
\|u^{\eps,r}_{\mathrm{in}}\|_{L^2(B_r)}+\|\partial_xu^{\eps,r}_{\mathrm{in}}\|_{L^2(B_r)}\leq C_0\,,
\end{equation}
and assume that there also holds 
\begin{equation}
\label{uni-bd-init-r2}
\|\partial_tu^{\eps,r}(0)\|_{L^2(B_r)}\leq C_0\,.
\end{equation}
\end{subequations}
Then there are constants $C>0$ and $C(\eps,T)>0$ such that the solutions $(u^{\eps,r})_r$ of problem 
\eqref{DCP-smooth} satisfy the following estimates  uniformly for all $r>0$: 
\begin{subequations}
\label{uni-bd-sol-DCP-smooth}
\begin{align}
\label{uni-bd-sol-DCP-smooth1}
\|u^{\eps,r}(t)\|_{L^2(B_r)}^2&\leq C \quad\text{for all }t\in[0,T]\,,
\\
\label{uni-bd-sol-DCP-smooth2}
\eps\|\partial_x u^{\eps,r}\|_{L^2((0,T)\times B_r)}^2&\leq C\,,
\\
\label{uni-bd-sol-DCP-smooth3}
\eps\int_0^T\|u^{\eps,r}(t)\|_{L^\infty(B_r)}^2\,\mathrm{d}t&\leq C\,,
\\
\label{uni-bd-sol-DCP-smooth4}
\eps\|\partial_xu^{\eps,r}(t)\|_{L^2(B_r)}^2&\leq C(\eps,T)\quad\text{for all }t\in[0,T]\,,
\\
\label{uni-bd-sol-DCP-smooth5}
\int_0^T\|\partial_x^2u^{\eps,r}(t)\|_{L^2(B_r)}^2\,\mathrm{d}t&\leq C(\eps,T)\,,
\\
\label{uni-bd-sol-DCP-smooth6}
\int_0^T\|\partial_xu^{\eps,r}(t)\|_{L^\infty(B_r)}^2\,\mathrm{d}t&\leq C(\eps,T)\,,
\\
\label{uni-bd-sol-DCP-smooth7}
\|\partial_tu^{\eps,r}(t)\|_{L^2(B_r)}^2&\leq C(\eps,T)\quad\text{for all }t\in[0,T]\,,
\\
\label{uni-bd-sol-DCP-smooth8}
\|\partial_x^2u^{\eps,r}(t)\|_{L^2(B_r)}^2&\leq C(\eps,T)\quad\text{for all }t\in[0,T]\,,
\\
\label{uni-bd-sol-DCP-smooth9}
\int_0^T\|\partial_x^2R^{\eps,r}\|_{L^2(B_r)}^2\,\mathrm{d}t&\leq C(\eps,T)
\quad\text{for }\;R^{\eps,r}:=\frac{\partial_xu^{\eps,r}}{\sqrt{|\partial_xu^{\eps,r}|^2+\eps^2}}
\end{align}
\end{subequations}
\end{proposition}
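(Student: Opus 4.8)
The plan is to treat \eqref{DCP-smooth} as a uniformly parabolic equation for each fixed $\eps>0$ and to climb a regularity ladder by testing the equation and its differentiated versions against successively stronger multipliers, always exploiting that the regularized stress contributes a \emph{nonnegative} diffusion. Writing $u:=u^{\eps,r}$, $R:=R^{\eps,r}$ and $\psi_\eps(e):=\sqrt{e^2+\eps^2}$, the crucial structural observation is $R=\psi_\eps'(\dx u)$ with $\psi_\eps''(e)=\eps^2(e^2+\eps^2)^{-3/2}\geq0$, so that $-\dx R=-\psi_\eps''(\dx u)\dx^2u$ and the total second-order coefficient in the equation is $\psi_\eps''(\dx u)+\eps\geq\eps>0$. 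Throughout, the homogeneous Dirichlet condition $u(t,\pm r)=0$ (hence $\dt u(t,\pm r)=0$) makes all boundary terms produced by integration by parts against $u$, $\dt u$ or $\dx^2u$ vanish, and all interpolation inequalities are invoked with constants independent of $r$. All differentiations of the equation and the resulting tests are to be understood as carried out on the Galerkin approximations underlying Proposition \ref{Ex-sol-DCP-smooth} and then passed to the limit.

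First I would test \eqref{DCP-smooth1} with $u$: the convective term integrates to zero, the stress term yields $\int_{B_r}|\dx u|^2/\sqrt{|\dx u|^2+\eps^2}\ge0$ and the viscous term yields $\eps\norm{\dx u}{L^2}^2$, giving an energy--dissipation balance that, together with \eqref{uni-bd-init-r1}, immediately produces \eqref{uni-bd-sol-DCP-smooth1} and \eqref{uni-bd-sol-DCP-smooth2}; estimate \eqref{uni-bd-sol-DCP-smooth3} then follows from the one-dimensional interpolation $\norm{u}{L^\infty(B_r)}^2\le C\norm{u}{L^2}\norm{\dx u}{L^2}$. Next I would test with $-\dx^2u$: the stress and viscous terms give the nonnegative dissipation $\int\eps^2(\dx^2u)^2/(|\dx u|^2+\eps^2)^{3/2}+\eps\norm{\dx^2u}{L^2}^2$, while the only dangerous term $-\int u\,\dx u\,\dx^2u$ is controlled by Gagliardo--Nirenberg as $C\norm{u}{L^2}\norm{\dx u}{L^2}^{1/2}\norm{\dx^2u}{L^2}^{3/2}$ and absorbed into $\tfrac{\eps}{2}\norm{\dx^2u}{L^2}^2$, leaving the \emph{linear} differential inequality $\tfrac12\tfrac{d}{dt}\norm{\dx u}{L^2}^2+\tfrac{\eps}{2}\norm{\dx^2u}{L^2}^2\le C(\eps)\norm{\dx u}{L^2}^2$. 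This absorption step is what converts a potentially superlinear (finite-time-blow-up) bound into a linear Grönwall argument and is precisely the reason the estimates hold \emph{globally} for every $T>0$; Grönwall and integration then give \eqref{uni-bd-sol-DCP-smooth4} and \eqref{uni-bd-sol-DCP-smooth5}, and interpolation gives \eqref{uni-bd-sol-DCP-smooth6}.

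To reach \eqref{uni-bd-sol-DCP-smooth7} I would differentiate \eqref{DCP-smooth1} in time and test the equation for $w:=\dt u$ with $w$: the transport terms combine to $\tfrac12\int\dx u\,w^2$, the stress and viscous terms again give nonnegative dissipation (in particular $\eps\norm{\dx w}{L^2}^2$), so that $\tfrac12\tfrac{d}{dt}\norm{w}{L^2}^2\le\tfrac12\norm{\dx u}{L^\infty}\norm{w}{L^2}^2$; Grönwall with \eqref{uni-bd-sol-DCP-smooth6} and the initial bound \eqref{uni-bd-init-r2} yields \eqref{uni-bd-sol-DCP-smooth7}, while integrating the retained dissipation gives the auxiliary bound $\int_0^T\norm{\dt\dx u}{L^2}^2\,\mathrm{d}t\le C(\eps,T)$. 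Estimate \eqref{uni-bd-sol-DCP-smooth8} is then purely algebraic: solving $(\psi_\eps''(\dx u)+\eps)\dx^2u=\dt u+u\,\dx u$ and using $\psi_\eps''+\eps\ge\eps$ gives $\norm{\dx^2u}{L^2}\le\eps^{-1}(\norm{\dt u}{L^2}+\norm{u}{L^\infty}\norm{\dx u}{L^2})$, which is bounded pointwise in time by the previous estimates.

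Finally, \eqref{uni-bd-sol-DCP-smooth9} sits at the top of the ladder. Differentiating the identity $\dx R=\psi_\eps''(\dx u)\dx^2u=m(\dx u)\,(\dt u+u\dx u)$ with $m:=\psi_\eps''/(\psi_\eps''+\eps)\in[0,1)$ gives $\dx^2R=m'(\dx u)\dx^2u\,(\dt u+u\dx u)+m(\dx u)\,\dx(\dt u+u\dx u)$; I would bound $\int_0^T\norm{\dx^2R}{L^2}^2$ by combining $|m|\le1$, the boundedness of $m'$ for fixed $\eps$, the already-established $\int_0^T\norm{\dt\dx u}{L^2}^2$ bound, and Gagliardo--Nirenberg control of $\int_0^T\norm{\dx u}{L^4}^4$ and of the quadratic terms through \eqref{uni-bd-sol-DCP-smooth4}--\eqref{uni-bd-sol-DCP-smooth8}. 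The main obstacle is precisely this top estimate: it is the most computation-heavy, it is where the degeneracy of the stress nonlinearity forces all constants to blow up as $\eps\to0$ (so uniformity can only be in $r$, never in $\eps$), and it hinges on first having extracted the extra parabolic dissipation $\int_0^T\norm{\dt\dx u}{L^2}^2$ from the time-differentiated equation rather than on introducing any new test function.
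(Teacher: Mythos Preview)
Your proof is correct and follows the same overall ladder as the paper: energy estimate, $H^1$-level estimate via Gr\"onwall, time-differentiated equation, algebraic recovery of $\partial_x^2u$, and finally the stress bound. The differences are minor but worth noting.

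For \eqref{uni-bd-sol-DCP-smooth4}--\eqref{uni-bd-sol-DCP-smooth6} you test with $-\partial_x^2u$ and absorb the cubic convective term via Gagliardo--Nirenberg and Young's inequality. The paper instead squares the rewritten equation $\partial_t u-\partial_x(R+\eps\partial_x u)=-u\partial_x u$ and integrates; the cross terms reconstitute the time derivative of the energy $\eps\|\partial_x u\|_{L^2}^2+2\int\psi_\eps(\partial_x u)$, and the Gr\"onwall rate is $\|u\|_{L^\infty}^2/\eps$ (controlled by \eqref{uni-bd-sol-DCP-smooth3}) rather than your $C(\eps)\|u\|_{L^2}^4$. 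The paper's route has the advantage of simultaneously producing $\int_0^T\|\partial_t u\|_{L^2}^2$ and $\int_0^T\|\partial_x R\|_{L^2}^2$ as free byproducts, whereas you extract $\int_0^T\|\partial_t\partial_x u\|_{L^2}^2$ later from the time-differentiated equation's dissipation; both are legitimate.

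For \eqref{uni-bd-sol-DCP-smooth9} you should be aware that the paper's proof actually establishes only the $\partial_x R$ bound (not $\partial_x^2R$), and this is all that is used downstream in Theorem~\ref{Char-Ri}. The statement as printed contains a typo. Your argument for the genuine $\partial_x^2R$ bound via $\partial_x R=m(\partial_x u)(\partial_t u+u\partial_x u)$ is correct but overshoots; the paper simply writes $\partial_x R=\psi_\eps''(\partial_x u)\partial_x^2u$, bounds $|\psi_\eps''|\le 1/\eps$, and invokes \eqref{uni-bd-sol-DCP-smooth5}.
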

\begin{proof}
{\bf Proof of \eqref{uni-bd-sol-DCP-smooth1} -- \eqref{uni-bd-sol-DCP-smooth3}: }
Multiplying \eqref{DCP-smooth1} with $ u^{\eps,r},$ integrating over $B_r$ and performing an integration by parts results in 
\begin{equation}
\label{uni-bd-sol-DCP-smooth11}
\frac{\mathrm{d}}{\mathrm{d}t}\Big(\frac{1}{2}\|u^{\eps,r}(t)\|_{L^2(B_r)}^2\Big)
+\eps\|\partial_x u^{\eps,r}(t)\|_{L^2(B_r)}^2
+\int_{B_r}\frac{|\partial_xu^{\eps,r}|^2}{\sqrt{|\partial_xu^{\eps,r}|^2+\eps^2}}\,\mathrm{d}x=0\,.
\end{equation} 
Integrating \eqref{uni-bd-sol-DCP-smooth11} in time yields for all $T'\leq T$ that
\begin{equation}
\label{uni-bd-sol-DCP-smooth12}
\|u^{\eps,r}(T')\|_{L^2(B_r)}^2+\eps\int_0^{T'}\|\partial_x u^{\eps,r}(t)\|_{L^2(B_r)}^2\,\mathrm{d}t
\leq \|u^{\eps,r}(0)\|_{L^2(B_r)}^2\,,
\end{equation}
which provides the bounds \eqref{uni-bd-sol-DCP-smooth1} and \eqref{uni-bd-sol-DCP-smooth2} thanks to the uniform bound on the initial data \eqref{uni-bd-init-r1}. 
From \eqref{uni-bd-sol-DCP-smooth12} together with the Sobolev imbedding inequality one further obtains
\begin{equation}
\eps\int_0^T\|u^{\eps,r}(t)\|_{L^\infty(B_r)}^2\,\mathrm{d}t\leq C_{\mathrm{Sob}}\eps\int_0^T\|\partial_x u^{\eps,r}(t)\|_{L^2(B_r)}^2\,\mathrm{d}t
\leq C\,,
\end{equation}
which is \eqref{uni-bd-sol-DCP-smooth3}.
\\[1ex]
{\bf Proof of \eqref{uni-bd-sol-DCP-smooth4} -- \eqref{uni-bd-sol-DCP-smooth6}: } 
Now, we rewrite \eqref{DCP-smooth1} as follows:
\begin{equation}
\label{uni-bd-sol-DCP-smooth41}
\partial_tu^{\eps,r}-\partial_x\big(R^{\eps,r}+\partial_xu^{\eps,r}\big)=-\partial_x\big((u^{\eps,r})^2/2\big)\,.
\end{equation}
Taking the square of \eqref{uni-bd-sol-DCP-smooth41}, integrating the resultant over $B_r,$ and applying an integration by parts, gives
\begin{equation}
\label{uni-bd-sol-DCP-smooth42}
\begin{split}
&\frac{\mathrm{d}}{\mathrm{d}t}\Big(
\eps\|\partial_xu^{\eps,r}(t)\|_{L^2(B_r)}^2+2\!\int_{B_r}\!\!\sqrt{|\partial_xu^{\eps,r}(t)|^2+\eps^2}\,\mathrm{d}x
\Big)\\
&\hspace*{17ex}
+\|\partial_tu^{\eps,r}(t)\|_{L^2(B_r)}^2
+\frac{\eps^2}{2}\|\partial_x^2u^{\eps,r}(t)\|_{L^2(B_r)}^2
+\frac{1}{2}\|\partial_xR^{\eps,r}(t)\|_{L^2(B_r)}^2\\
&=\|u^{\eps,r}(t)\partial_xu^{\eps,r}(t)\|_{L^2(B_r)}^2
\leq \|u^{\eps,r}(t)\|_{L^\infty(B_r)}^2 \|\partial_xu^{\eps,r}(t)\|_{L^2(B_r)}^2\\
&\leq \|u^{\eps,r}(t)\|_{L^\infty(B_r)}^2\frac{\eps}{\eps}\Big( \|\partial_xu^{\eps,r}(t)\|_{L^2(B_r)}^2
+\frac{2}{\eps}\int_{B_r}\!\!\sqrt{|\partial_xu^{\eps,r}(t)|^2+\eps^2}\,\mathrm{d}x\Big)\,.
\end{split}
\end{equation}
Applying Gr\"onwall's inequality in \eqref{uni-bd-sol-DCP-smooth42} yields for all $T'\leq T$
\begin{equation}
\label{uni-bd-sol-DCP-smooth43}
\begin{split}
&\Big(
\eps\|\partial_xu^{\eps,r}(T')\|_{L^2(B_r)}^2+2\!\int_{B_r}\!\!\sqrt{|\partial_xu^{\eps,r}(T')|^2+\eps^2}\,\mathrm{d}x
\Big)\\
&+\int_0^{T'}\Big(
\exp\big(-\int_0^{t}\frac{\|u^{\eps,r}(s)\|_{L^\infty(B_r)}^2}{\eps}\,\mathrm{d}s\big)\\
&\hspace*{15ex}
\Big(\|\partial_tu^{\eps,r}(t)\|_{L^2(B_r)}^2+\frac{\eps^2}{2}\|\partial_x^2u^{\eps,r}(t)\|_{L^2(B_r)}^2
+\frac{1}{2}\|\partial_xR^{\eps,r}(t)\|_{L^2(B_r)}^2\Big)\Big)
\,\mathrm{d}t\\
&\leq\Big(
\eps\|\partial_xu^{\eps,r}(0)\|_{L^2(B_r)}^2+2\!\int_{B_r}\!\!\sqrt{|\partial_xu^{\eps,r}(0)|^2+\eps^2}\,\mathrm{d}x
\Big)\exp\big(\int_0^{T}\frac{\|u^{\eps,r}(t)\|_{L^\infty(B_r)}^2}{\eps}\,\mathrm{d}t\big)\\
&\leq C(\eps,T)
\end{split}
\end{equation}
thanks to  \eqref{uni-bd-sol-DCP-smooth3} and the uniform bound on the initial data \eqref{uni-bd-init-r1}.  
This proves \eqref{uni-bd-sol-DCP-smooth4}. 
Estimate \eqref{uni-bd-sol-DCP-smooth43} in particular also yields that
\begin{equation}
\label{uni-bd-sol-DCP-smooth44}
\begin{split}
&\int_0^T\frac{\eps^2}{2}\|\partial_x^2u^{\eps,r}(t)\|_{L^2(B_r)}^2\,\mathrm{d}t\\
&\leq\Big(
2\eps\|\partial_xu^{\eps,r}(0)\|_{L^2(B_r)}^2+2\!\int_{B_r}\!\!\sqrt{|\partial_xu^{\eps,r}(0)|^2+\eps^2}\,\mathrm{d}x
\Big)\exp\big(2\int_0^{T}\frac{\|u^{\eps,r}(t)\|_{L^\infty(B_r)}^2}{\eps}\,\mathrm{d}t\big)\\
&\leq C(\eps,T)\,,
\end{split}
\end{equation}
which proves \eqref{uni-bd-sol-DCP-smooth5}. Estimate \eqref{uni-bd-sol-DCP-smooth6} 
then follows from \eqref{uni-bd-sol-DCP-smooth5}, resp.\ \eqref{uni-bd-sol-DCP-smooth44}, by applying the Sobolev imbedding inequality. 
\\[1ex]
{\bf Proof of \eqref{uni-bd-sol-DCP-smooth7}: }Next, we apply $\partial_t$ to both sides of \eqref{DCP-smooth1}. 
Denoting $u^{\eps,r}_t:=\partial_tu^{\eps,r}$ the resulting equation can be rewritten as 
\begin{equation}
\label{uni-bd-sol-DCP-smooth71}
\partial_tu_t^{\eps,r}+u^{\eps,r}\partial_xu_t^{\eps,r}+u_t^{\eps,r}\partial_xu^{\eps,r}-\partial_x\Big(\partial_tR^{\eps,r}-\eps\partial_xu_t^{\eps,r}\Big)=0\,.
\end{equation}
Testing \eqref{uni-bd-sol-DCP-smooth71} with $u^{\eps,r}_t$, integrating over $B_r$, and applying an integration by parts, leads to
\begin{equation}
\label{uni-bd-sol-DCP-smooth72}
\begin{split}
&\frac{\mathrm{d}}{\mathrm{d}t}
\left\{
\frac{1}{2}\|u_t^{\eps,r}\|_{L^2(B_r)}^2
\right\}
+\eps\|\partial_xu_t^{\eps,r}\|_{L^2(B_r)}^2
+\int_{B_r}\frac{\eps^2|\partial_xu_t^{\eps,r}|^2}{(|\partial_xu_t^{\eps,r}|^2+\eps^2)^{3/2}}\,\mathrm{d}x\\
&=-\frac{1}{2}\int_{B_r}\partial_xu^{\eps,r}|u_t^{\eps,r}|^2\,\mathrm{d}x
\leq\frac{1}{2}\|\partial_xu^{\eps,r}\|_{L^\infty(B_r)}\,\|u_t^{\eps,r}\|_{L^2(B_r)}^2\,.
\end{split}
\end{equation} 
Applying Gr\"onwall's inequality to \eqref{uni-bd-sol-DCP-smooth72} yields for all $T'\leq T$ that
\begin{equation}
\begin{split}
&\left\{
\frac{1}{2}\|u_t^{\eps,r}(T')\|_{L^2(B_r)}^2
\right\}\\
&\qquad+\int_0^{T'}
\exp\big(-\int_0^s
\|\partial_xu^{\eps,r}(s)\|_{L^\infty(B_r)}
\,\mathrm{d}s\big)\\
&\hspace*{18ex}
\Big(\eps\|\partial_xu_t^{\eps,r}\|_{L^2(B_r)}^2
+\int_{B_r}\frac{\eps^2|\partial_xu_t^{\eps,r}|^2}{(|\partial_xu_t^{\eps,r}|^2+\eps^2)^{3/2}}\Big)\,\mathrm{d}x\\
&\leq\frac{1}{2}\|u_t^{\eps,r}(0)\|_{L^2(B_r)}^2\exp\big(\int_0^T
\|\partial_xu^{\eps,r}(t)\|_{L^\infty(B_r)}
\,\mathrm{d}t\big)\\
&\leq C(\eps,T)\,,
\end{split}
\end{equation}
thanks to estimate \eqref{uni-bd-sol-DCP-smooth6} and the uniform bound on the initial data \eqref{uni-bd-init-r2}. 
\\[1ex]
{\bf Proof of \eqref{uni-bd-sol-DCP-smooth8}: }We rewrite \eqref{DCP-smooth1} as
\begin{equation}
\label{uni-bd-sol-DCP-smooth81}
\Big(
\frac{\eps^2}{(|\partial_xu^{\eps,r}|^2+\eps^2)^{3/2}}+\eps
\Big)\partial_x^2u^{\eps,r}=\partial_tu^{\eps,r}+u^{\eps,r}\partial_xu^{\eps,r}\,.
\end{equation}
Then, thanks to estimates \eqref{uni-bd-sol-DCP-smooth7}, \eqref{uni-bd-sol-DCP-smooth1}, and \eqref{uni-bd-sol-DCP-smooth4} one can conclude from \eqref{uni-bd-sol-DCP-smooth81} for all $T'\leq T$ that also 
\begin{equation}
\label{uni-bd-sol-DCP-smooth82}
\|\partial_x^2u^{\eps,r}(T')\|_{L^2(B_r)}^2\leq\|\partial_tu^{\eps,r}(T')\|_{L^2(B_r)}^2+\|u^{\eps,r}(T')\|_{L^\infty(B_r)}^2
\|\partial_xu^{\eps,r}(T')\|_{L^2(B_r)}^2 \leq C(\eps,T)\,,
\end{equation}
which proves \eqref{uni-bd-sol-DCP-smooth8}. 
\\[1ex]
{\bf Proof of \eqref{uni-bd-sol-DCP-smooth9}: }Recall that
\begin{equation*}
\partial_xR^{\eps,r}=\partial_x\frac{\partial_xu^{\eps,r}}{\sqrt{|\partial_xu^{\eps,r}|^2+\eps^2}}
=\frac{\partial_x^2u^{\eps,r}}{\sqrt{|\partial_xu^{\eps,r}|^2+\eps^2}}-
\frac{\partial_xu^{\eps,r}}{\big(|\partial_xu^{\eps,r}|^2+\eps^2\big)^{3/2}}\,\partial_x^2u^{\eps,r}
\end{equation*}
In view of estimates \eqref{uni-bd-sol-DCP-smooth5} we therefore find
\begin{equation*}
\begin{split}
&\int_0^T\|\partial_xR^{\eps,r}\|_{L^2(B_r)}^2\,\mathrm{d}t
\leq\frac{2}{\eps^4}\int_0^T\|\partial_x^2u^{\eps,r}\|_{L^2(B_r)}^2\,\mathrm{d}t
+2\int\int_{B_r}\frac{|\partial_xu^{\eps,r}|^2}{(|\partial_xu^{\eps,r}|^2+\eps^2)^3}\,
|\partial_x^2u^{\eps,r}|^2\,\mathrm{d}x\,\mathrm{d}t\\
&\leq\frac{4}{\eps^4}\int_0^T\|\partial_x^2u^{\eps,r}\|_{L^2(B_r)}^2\,\mathrm{d}t
\leq C(\eps,T)\,,
\end{split}
\end{equation*}
which proves \eqref{uni-bd-sol-DCP-smooth9} and concludes the proof of the uniform a priori bounds \eqref{uni-bd-sol-DCP-smooth}.
\end{proof}
{\bf To Step 3: Limit passage $r\to\infty$: }
\begin{proposition}[Well-posedness of a global $H^2$ solution]
\label{Prop-limit-r}
Let the assumptions of Prop.\ \ref{Prop-uni-bd-r} be satisfied. Keep $\eps>0$ and $T>0$ fixed. Assume that the sequence of initial data 
$(u^{\eps,r}_\mathrm{in})_r$ is well-prepared, such that $u^{\eps,r}_\mathrm{in}\to u^{\eps}_\mathrm{in}$ in $H^2(\R)$ with 
\begin{equation}
u^{\eps}_\mathrm{in}\in H^2(\R)
\quad\text{ and }\quad
\lim_{|x|\to\infty}u^{\eps}_\mathrm{in}(x)=0\,.
\end{equation}
Then there exists a (not relabelled) subsequence $r\to\infty$ and a limit function $u^\eps:[0,T]\times\R\to\R$ such that, for every $r'>0$ 
fixed, the subsequence $(u^{\eps,r})_{r}$ converges to $u^\eps$ in the weak topology imposed by the uniform bounds 
\eqref{uni-bd-sol-DCP-smooth} 
locally on $B_{r'}$ as $r\to\infty$. The limit function $u^\eps$ has the following regularity
\begin{equation}
\label{reg-limit-r}
\begin{split}
u^\eps&\in L^\infty(0,T;H^2(\R))\cap L^2(0,T;H^2(\R))\quad\text{ with }\\
\partial_tu^\eps&\in L^\infty(0,T;L^2(\R))\cap L^2(0,T;H^1(\R))\,,
\end{split}
\end{equation}  
and it is a weak solution of  Cauchy problem \eqref{ibvp-approx}. In particular, $u^\eps$ satisfies the pointwise far-field boundary condition 
\eqref{bc:far-field}, i.e., 
\begin{equation}
\label{bc-far-field-pf}
\lim_{|x|\to\infty}u^\eps(x,t)=0\quad\text{ for all }t\in[0,T]\,.
\end{equation}
\end{proposition}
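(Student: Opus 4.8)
The plan is to extract convergent subsequences on a fixed ball, use the Aubin--Lions-type compactness of Theorem \ref{Help-Simon} to upgrade to the strong convergence needed for the nonlinear terms, and then exhaust $\R$ by a diagonal argument over growing balls. First I would fix $r'>0$. For every $r>r'$ the restriction $u^{\eps,r}|_{(0,T)\times B_{r'}}$ inherits all the uniform bounds \eqref{uni-bd-sol-DCP-smooth} of Proposition \ref{Prop-uni-bd-r} (restricting to the smaller domain $B_{r'}\subset B_r$ only decreases the norms), so that $(u^{\eps,r})_r$ is bounded in $L^\infty(0,T;H^2(B_{r'}))$ and $(\partial_tu^{\eps,r})_r$ in $L^\infty(0,T;L^2(B_{r'}))\cap L^2(0,T;H^1(B_{r'}))$. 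By the Banach--Alaoglu theorem I extract a subsequence converging in the corresponding weak and weak-$*$ topologies on $B_{r'}$ to a limit $u^\eps$. To handle the nonlinearities I apply Theorem \ref{Help-Simon} with $A:=H^2(B_{r'})$, $B:=H^1(B_{r'})$, $C:=L^2(B_{r'})$ and $q=\infty$: the family $(u^{\eps,r})_r$ is bounded in $L^\infty(0,T;B)\cap L^1_{\mathrm{loc}}(0,T;A)$ and $(\partial_tu^{\eps,r})_r$ in $L^1_{\mathrm{loc}}(0,T;C)$ by \eqref{uni-bd-sol-DCP-smooth}, while $A\hookrightarrow\hookrightarrow B$ by Rellich's theorem. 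Hence $(u^{\eps,r})_r$ is relatively compact in $L^p(0,T;H^1(B_{r'}))$ for every $p<\infty$, and along a further subsequence $u^{\eps,r}\to u^\eps$ and $\partial_xu^{\eps,r}\to\partial_xu^\eps$ strongly in $L^2((0,T)\times B_{r'})$, hence $\mathcal{L}^2$-almost everywhere.

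Next I would carry out a diagonal argument over an increasing sequence of radii $r'_n\nearrow\infty$: combining the subsequences extracted on each $B_{r'_n}$ and identifying the limits on overlaps by uniqueness of the weak limit yields a single (not relabelled) subsequence $r\to\infty$ and a limit function $u^\eps:[0,T]\times\R\to\R$. The regularity \eqref{reg-limit-r} follows from weak-$*$ lower semicontinuity of the norms together with the uniform bounds being independent of $r'$: letting $r'\to\infty$ by monotone convergence turns the local bounds into the global ones $u^\eps\in L^\infty(0,T;H^2(\R))\cap L^2(0,T;H^2(\R))$ and $\partial_tu^\eps\in L^\infty(0,T;L^2(\R))\cap L^2(0,T;H^1(\R))$.

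With the $\mathcal{L}^2$-a.e.\ convergence of $\partial_xu^{\eps,r}$ secured, the regularized stress $R^{\eps,r}=\partial_xu^{\eps,r}/\sqrt{|\partial_xu^{\eps,r}|^2+\eps^2}$ converges to $R^\eps=\partial_xu^\eps/\sqrt{|\partial_xu^\eps|^2+\eps^2}$ a.e.\ and boundedly, since $s\mapsto s/\sqrt{s^2+\eps^2}$ is continuous with values in $[-1,1]$; by dominated convergence this gives convergence in every $L^p_{\mathrm{loc}}$. Likewise $(u^{\eps,r})^2\to(u^\eps)^2$ strongly by the $L^2(0,T;H^1)\hookrightarrow L^2(0,T;\mathrm{C}(\overline{B_{r'}}))$ convergence. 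Passing to the limit in the weak form of \eqref{DCP-smooth1}, tested only with $\phi\in \mathrm{C}^1_{\mathrm{c}}([0,\infty)\times\R)$ whose spatial support lies in some $B_{r'}$, so that for $r>r'$ the Dirichlet boundary terms at $\pm r$ never appear, together with $u^{\eps,r}_{\mathrm{in}}\to u^\eps_{\mathrm{in}}$ in $H^2(\R)$, then yields the integral identity \eqref{weak-ibvp-approx}, i.e.\ $u^\eps$ is a weak solution of \eqref{ibvp-approx}.

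Finally, the far-field condition \eqref{bc-far-field-pf} is a consequence of the regularity rather than of a separate limit passage. Since $\partial_tu^\eps\in L^\infty(0,T;L^2(\R))$, the map $t\mapsto u^\eps(t,\cdot)$ is Lipschitz, hence continuous, into $L^2(\R)$; interpolating with the uniform bound $u^\eps\in L^\infty(0,T;H^2(\R))$ via $\|u^\eps(t)-u^\eps(s)\|_{H^\theta}\leq C\|u^\eps(t)-u^\eps(s)\|_{L^2}^{1-\theta/2}$ gives $u^\eps\in \mathrm{C}([0,T];H^\theta(\R))$ for every $\theta<2$, and choosing $\theta\in(\tfrac12,2)$ the embedding $H^\theta(\R)\hookrightarrow \mathrm{C}_0(\R)$ in one space dimension yields $\lim_{|x|\to\infty}u^\eps(t,x)=0$ for all $t\in[0,T]$, which is \eqref{bc:far-field}. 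The main obstacle is the treatment of the nonlinear advection and stress terms, which forces the use of the Aubin--Lions compactness of Theorem \ref{Help-Simon} to produce the a.e.\ convergence of $\partial_xu^{\eps,r}$; once this strong convergence is available on each fixed ball, the diagonalisation over $r'\to\infty$ and the verification of the global regularity and the far-field behaviour are comparatively routine.
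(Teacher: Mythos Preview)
Your proof is correct and the overall architecture---extract weak limits on each fixed ball from the uniform bounds \eqref{uni-bd-sol-DCP-smooth}, diagonalise over growing radii, and inherit the regularity \eqref{reg-limit-r} by lower semicontinuity---coincides with the paper's own argument. The paper is in fact terser than you are: it simply asserts that ``the statements of the proposition directly follow from the a priori bounds'' and does not spell out the strong convergence needed to identify the limit of the nonlinear terms $(u^{\eps,r})^2$ and $R^{\eps,r}$. Your use of Theorem~\ref{Help-Simon} with the triple $H^2\hookrightarrow\hookrightarrow H^1\hookrightarrow L^2$ to produce $\partial_xu^{\eps,r}\to\partial_xu^\eps$ in $L^2_{\mathrm{loc}}$, and hence the identification of $R^\eps$ via dominated convergence, fills this gap cleanly.

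The one place where the two proofs genuinely diverge is the far-field condition \eqref{bc-far-field-pf}. The paper argues by contradiction: it observes (from \eqref{reg-limit-r}) that $u^\eps(t,\cdot)$ is uniformly continuous, assumes there is a sequence $x_j\to\infty$ along which $u^\eps(t,x_j)\to c\neq0$, and obtains from uniform continuity a $\delta$-neighbourhood of each $x_j$ on which $|u^\eps(t,\cdot)|$ is bounded below, contradicting $u^\eps(t,\cdot)\in L^2(\R)$. Your route---Lipschitz continuity of $t\mapsto u^\eps(t)$ into $L^2(\R)$, interpolation with the $L^\infty(0,T;H^2)$ bound to obtain $u^\eps\in\mathrm{C}([0,T];H^\theta(\R))$, and then the embedding $H^\theta(\R)\hookrightarrow\mathrm{C}_0(\R)$ for $\theta>1/2$---is equally valid and arguably more direct, since membership in $\mathrm{C}_0(\R)$ immediately encodes the decay at infinity without a separate contradiction argument.
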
   
\begin{proof}
The statements of the proposition directly follow from the a priori bounds \eqref{uni-bd-sol-DCP-smooth}, which are uniform in $r$. In particular, for $r_1>0$ fixed one finds a subsequence of solutions which converges in the topology imposed by \eqref{uni-bd-sol-DCP-smooth} on 
$B_{r_1}$ to a limit $u^\eps_{r_1}$. For any $r_2>r_1$ one finds a further subsequence  of solutions which converge in the topology imposed by \eqref{uni-bd-sol-DCP-smooth} on 
$B_{r_2}$ to a limit $u^\eps_{r_2}$ and by the uniqueness of the limit one concludes that $u^\eps_{r_2}\big|_{B_{r_1}}=u^\eps_{r_1}$. 
In this way, when selecting a sequence $r_n\to\infty$ as $n\to\infty,$ one finds a subsequence that converges in the topology imposed by \eqref{uni-bd-sol-DCP-smooth} on each 
$B_{r_n}$ to a limit $u^\eps_{r_n}$ such that $u^\eps_{r_n}\big|_{B_{r_m}}=u^\eps_{r_m}$ for every $r_m<r_n$. This results in the limit function $u^\eps:[0,T]\times\R\to\R,$ that inherits the regularity properties \eqref{reg-limit-r} from the uniform bounds \eqref{uni-bd-sol-DCP-smooth}. 
\par
It remains to verify that also the pointwise far-field boundary condition \eqref{bc-far-field-pf} holds true. 
For this we observe that the regularity \eqref{reg-limit-r} implies that $u^\eps\in \mathrm{C}(0,T;\mathrm{C}(\overline{\R})),$ i.e., in particular, 
for all $t>0$ the function $u^\eps(t,\cdot)$ is uniformly continuous in $\R$. Let now $t>0$ be fixed and assume that there is a subsequence $(x_j)_{j\in\N}$ with $x_j\to\infty$ and such that $u^\eps(t,x_j)\to c\not=0$ as $j\to\infty$. 
Thus, for any $\kappa>0$ sufficiently small such that $|c-\kappa/2|>\kappa/2,$ there is an index $j_\kappa\in\N$ such that for all $j\geq j_\kappa$ there holds $|u^\eps(t,x_j)- c|<\kappa/4$. 
By the uniform continuity of  $u^\eps(t,\cdot)$ there is $\delta_\kappa>0$ such that for all $j\geq j_\kappa$ and for all 
$x\in B_{\delta_\kappa}(x_j)$ there holds $|u^\eps(t,x)-u^\eps(t,x_j)|<\kappa/4$. Therefore we have 
\begin{equation*}
|u^\eps(t,x)|\geq |c-\kappa/2|>\kappa/2\quad\text{ for all $x\in B_{\delta_\kappa}(x_j)$ and for all $j\geq j_\kappa$}\,.
\end{equation*}
In view of regularity \eqref{reg-limit-r} this results the following contradiction:
\begin{equation*}
\infty>\|u^\eps(t)\|_{L^2(\R)}^2>\sum_{j=j_\kappa}^\infty\int_{B_{\delta_\kappa}(x_j)}|\kappa/2|^2\,\mathrm{d}x
=\sum_{j=j_\kappa}^\infty\frac{\kappa}{2}\mathcal{L}^1(B_{\delta_\kappa}(x_j))=\infty\,.
\end{equation*}
This shows that \eqref{bc-far-field-pf} must be satisfied. 
\end{proof}
{\bf To Step 4: Verification of the integral far-field relations for the limit solutions: }
\begin{proposition}[Integral far-field relations]
\label{Prop-Far-field}
Let the assumptions of Prop.\ \ref{Prop-limit-r} be satisfied. Keep $\eps>0$ and $T>0$ fixed. Then the weak solution $u^\eps$ of  Cauchy problem \eqref{ibvp-approx} also satisfies the integral far-field relations \eqref{int-far-field}, i.e., for all $t\in[0,T]$ there holds 
\begin{subequations}
\label{int-far-field-pf}
\begin{eqnarray}
\label{int-far-field1pf}
\int_0^T\tfrac{1}{3}u^\varepsilon(\pm r,t)^3\,\mathrm{d}t&\to&0\quad\text{as }r\to\infty\,,\\
\label{int-far-field2pf}
\int_0^Tu^\varepsilon(\pm r,t)\varepsilon \partial_xu^\varepsilon(\pm r,t)\,\mathrm{d}t&\to&0\quad\text{as }r\to\infty\,,\\
\label{int-far-field3pf}
\int_0^Tu^\varepsilon(\pm r,t)\frac{\partial_xu^\varepsilon(\pm r,t)}{\sqrt{|\partial_xu^\varepsilon(\pm r,t)|^2+\varepsilon^2}}\,\mathrm{d}t&\to&0\quad\text{as }r\to\infty\,.
\end{eqnarray}
\end{subequations}
\end{proposition}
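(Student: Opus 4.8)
The plan is to reduce all three far-field relations \eqref{int-far-field1pf}--\eqref{int-far-field3pf} to a single application of Lebesgue's dominated convergence theorem, exploiting the strong regularity of $u^\eps$ available in one space dimension. First I would recall from Prop.\ \ref{Prop-limit-r} that $u^\eps\in L^\infty(0,T;H^2(\R))\cap L^2(0,T;H^2(\R))$, so that for a.e.\ $t\in(0,T)$ the slice $u^\eps(t,\cdot)$ lies in $H^2(\R)$, whence both $u^\eps(t,\cdot)$ and $\partial_xu^\eps(t,\cdot)$ belong to $H^1(\R)$. Since in space dimension one the Sobolev embedding gives $H^1(\R)\hookrightarrow \mathrm{C}_0(\R)$, i.e.\ continuous functions decaying at infinity, I would conclude that for a.e.\ $t\in(0,T)$ there holds
\[
u^\eps(t,\pm r)\to0\quad\text{and}\quad\partial_xu^\eps(t,\pm r)\to0\qquad\text{as }r\to\infty\,.
\]

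Second, I would establish the pointwise-in-$t$ convergence of each of the three integrands as $r\to\infty$. For \eqref{int-far-field1pf} this is immediate from $u^\eps(t,\pm r)\to0$. For \eqref{int-far-field2pf} the integrand $u^\eps(\pm r,t)\,\eps\,\partial_xu^\eps(\pm r,t)$ tends to $0$ as a product of two factors that both vanish in the limit. For \eqref{int-far-field3pf} I would use the elementary bound $|R^\eps|\le1$ (cf.\ \eqref{unif-bd-Reps-Linfty}), so that $|u^\eps(\pm r,t)R^\eps(\pm r,t)|\le|u^\eps(\pm r,t)|\to0$. Thus all three integrands converge to $0$ for a.e.\ $t\in(0,T)$.

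Third, to interchange the limit $r\to\infty$ with the time integration I would produce constant (hence, on the finite interval $(0,T)$, integrable) majorants that are uniform in $r$. Using the embedding $H^2(\R)\hookrightarrow W^{1,\infty}(\R)$ in one dimension together with the uniform-in-time bound $\|u^\eps\|_{L^\infty(0,T;H^2(\R))}\le C$ inherited from \eqref{reg-limit-r}, I would estimate
\[
\tfrac13\big|u^\eps(\pm r,t)\big|^3\le\tfrac13\|u^\eps\|_{L^\infty(0,T;L^\infty(\R))}^3\,,\qquad
\big|u^\eps(\pm r,t)R^\eps(\pm r,t)\big|\le\|u^\eps\|_{L^\infty(0,T;L^\infty(\R))}\,,
\]
as well as $|u^\eps(\pm r,t)\,\eps\,\partial_xu^\eps(\pm r,t)|\le\eps\,\|u^\eps\|_{L^\infty(0,T;L^\infty(\R))}\|\partial_xu^\eps\|_{L^\infty(0,T;L^\infty(\R))}$, each a finite constant independent of $r$ and $t$. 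Lebesgue's dominated convergence theorem then yields \eqref{int-far-field1pf}--\eqref{int-far-field3pf}.

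The argument is essentially routine; the only point requiring care is the interchange of the limit $r\to\infty$ with the time integral. This is precisely what the combination of a.e.-in-$t$ pointwise decay (from the one-dimensional Sobolev embedding into $\mathrm{C}_0(\R)$) and the uniform-in-time $H^2$-bound (which supplies a constant, and therefore integrable, majorant on the finite interval $(0,T)$) delivers. In particular, no passage to a subsequence in $r$ is needed, and the convergence holds for the full limit $r\to\infty$.
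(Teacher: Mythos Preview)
Your proof is correct and complete, but it takes a different route from the paper's argument. The paper first establishes the auxiliary claim $\int_0^T|u^\varepsilon(\pm r,t)|^2\,\mathrm{d}t\to0$ as $r\to\infty$ by contradiction: assuming this integral stays bounded away from zero along a subsequence $(r_j)_j$, it uses uniform continuity of the map $x\mapsto\|u^\eps(\cdot,x)\|_{L^2(0,T)}^2$ (inferred from \eqref{reg-limit-r}) to find a $\delta$-neighbourhood of each $r_j$ on which this map stays above $c/2$, and then contradicts $\|u^\eps\|_{L^2((0,T)\times\R)}^2<\infty$. Once this $L^2$-in-time decay is in hand, the paper concludes \eqref{int-far-field1pf}--\eqref{int-far-field3pf} via H\"older's inequality, using respectively $\|u^\eps\|_{L^\infty_{t,x}}$ for the cubic term and the $L^2$-in-time bound on $\partial_xu^\eps(\pm r,\cdot)$ for the other two.

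By contrast, you bypass the auxiliary $L^2$-in-time claim entirely: you extract pointwise-in-$t$ decay of each integrand directly from the one-dimensional embedding $H^1(\R)\hookrightarrow \mathrm{C}_0(\R)$ applied to both $u^\eps(t,\cdot)$ and $\partial_xu^\eps(t,\cdot)$, and then invoke dominated convergence with a constant majorant supplied by the $L^\infty(0,T;H^2(\R))\hookrightarrow L^\infty(0,T;W^{1,\infty}(\R))$ bound. Your argument is shorter and arguably more transparent; the paper's argument, on the other hand, relies only on the $L^2$-in-space finiteness to drive the contradiction and could in principle be adapted to settings with weaker pointwise-in-time control.
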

\begin{proof}
We first show that 
\begin{equation}
\label{int-far-field-pf1}
\int_0^T|u^\varepsilon(\pm r,t)|^2\,\mathrm{d}t\to0\quad\text{as }r\to\infty\,.
\end{equation}
For this, we argue by contradiction and assume that there is a subsequence $r_j\to\infty$  such that 
\begin{equation}
\label{int-far-field-pf2}
\int_0^T|u^\varepsilon(r_j,t)|^2\,\mathrm{d}t\to c>0\quad\text{as }j\to\infty\,.
\end{equation}
Hence, there is an index $j_c>0$ such that for all $j\geq j_c$ we have $\int_0^T|u^\varepsilon(r_j,t)|\,\mathrm{d}t\geq c/4$. 
The regularity properties \eqref{reg-limit-r} imply that the map $x\mapsto\|u^\eps(\cdot,x)\|_{L^2(0,T)}^2$ is uniformly continuous. 
Thus, there is $\delta_c>0$ such that for all $j\geq j_c$ and for all $x\in B_{\delta_c}(r_j)$ we have $\|u^\eps(\cdot,x)\|_{L^2(0,T)}^2>c/2$. 
Again by the regularity properties \eqref{reg-limit-r} we therefore arrive at the following contradiction
\begin{equation}
\begin{split}
\infty>\|u^\eps\|_{L^2(0,T;H^2(\R))}^2&\geq\|u^\eps\|_{L^2(0,T;L^2(\R))}^2=\int_0^\infty\int_0^\infty|u^\eps|^2\,\mathrm{d}t\,\mathrm{d}x\\
&\geq\sum_{j=j_c}^\infty\int_{B_{\delta_c}(r_j)}\int_0^\infty|u^\eps|^2\,\mathrm{d}t\,\mathrm{d}x
\geq\sum_{j=j_c}^\infty\frac{c}{2}\mathcal{L}^1(B_{\delta_c}(r_j))=\infty\,.
\end{split}
\end{equation}
Hence, \eqref{int-far-field-pf1} holds true. 
\par 
In order to deduce \eqref{int-far-field1pf} we use that \eqref{reg-limit-r} implies that $\|u^\eps\|_{L^\infty(0,T;L^\infty(\R)}\leq C$ and therefore we find with the aid of H\"older's inequality
\begin{equation}
\Big|\int_0^T\tfrac{1}{3}u^\varepsilon(\pm r,t)^3\,\mathrm{d}t\Big|
\leq\int_0^T\tfrac{1}{3}|u^\varepsilon(\pm r,t)|^3\,\mathrm{d}t
\leq \tfrac{1}{3}\|u^\eps\|_{L^\infty(0,T;L^\infty(\R)}\int_0^T|u^\varepsilon(\pm r,t)|^2\,\mathrm{d}t\to0
\end{equation}
as $r\to\infty$ thanks to \eqref{int-far-field-pf1}. 
Similarly we also conclude \eqref{int-far-field2pf}, i.e., 
\begin{align*}
\Big|\int_0^Tu^\varepsilon(\pm r,t)\varepsilon \partial_xu^\varepsilon(\pm r,t)\,\mathrm{d}t\Big|
&\leq\int_0^T|u^\varepsilon(\pm r,t)\varepsilon \partial_xu^\varepsilon(\pm r,t)|\,\mathrm{d}t\\
&\leq\Big(\int_0^T|\eps\partial_xu^\varepsilon(\pm r,t)|^2\,\mathrm{d}t\Big)^{1/2}\Big(\int_0^T|u^\varepsilon(\pm r,t)|^2\,\mathrm{d}t\Big)^{1/2}\to0\,,
\end{align*}
as well as \eqref{int-far-field3pf}, i.e., 
\begin{align*}
&\Big|\int_0^Tu^\varepsilon(\pm r,t)\frac{\partial_xu^\varepsilon(\pm r,t)}{\sqrt{|\partial_xu^\varepsilon(\pm r,t)|^2+\varepsilon^2}}\,\mathrm{d}t\Big|
\leq\int_0^T|u^\varepsilon(\pm r,t)|\frac{|\partial_xu^\varepsilon(\pm r,t)|}{\sqrt{|\partial_xu^\varepsilon(\pm r,t)|^2+\varepsilon^2}}\,\mathrm{d}t\\
&\leq\int_0^T|u^\varepsilon(\pm r,t)|\frac{|\partial_xu^\varepsilon(\pm r,t)|}{\varepsilon}\,\mathrm{d}t
\leq\frac{1}{\eps}\Big(\int_0^T|\eps\partial_xu^\varepsilon(\pm r,t)|^2\,\mathrm{d}t\Big)^{1/2}\Big(\int_0^T|u^\varepsilon(\pm r,t)|^2\,\mathrm{d}t\Big)^{1/2}\to0\,,
\end{align*}
again by regularity properties \eqref{reg-limit-r} and \eqref{int-far-field-pf1}. 
\end{proof}

\paragraph{Acknowledgements: }M.T.\ and E.S.T.\ are grateful for the support by Deutsche Forschungsgemeinschaft (DFG) within CRC 1114 \emph{Scaling Cascades in Complex Systems}, Project-Number 235221301, in particular within the Projects B09 \emph{Materials with Discontinuities on Many Scales} and C09 \emph{Dynamics of Rock Dehydration on Multiple Scales}. M.T.\ also acknowledges the support by DFG within CRC TRR 388 \emph{Rough Analysis, Stochastic Dynamics \& Related Fields}, Project-Number 516748464, Project B09 
\emph{Mean field theories and scaling limits of nonlinear stochastic evolution systems}.  M.T.'s work also benefitted from the kind hospitality and the inspiring research environment at the Department of Mathematics at Texas A\&M University and at the Department of Applied Mathematics and Theoretical Physics, University of Cambridge, where part of this work was completed. Moreover, the work of E.S.T.\ was also supported in part by the DFG Research Unit FOR 5528 on Geophysical Flows.  

\bibliographystyle{alpha}
\bibliography{Library.bib}
\end{document}